\documentclass[12pt]{amsart}

\usepackage[
    left=0.8in,
    right=0.8in,
    top=0.8in,
    bottom=0.8in
]{geometry}
\usepackage{amssymb,amsthm, times, enumerate}
\usepackage{delarray,verbatim}
\usepackage{mathrsfs}

\usepackage{algorithm}

\usepackage{ifpdf}
\ifpdf
\usepackage[pdftex]{graphicx}
 \else
\usepackage[dvips]{graphicx}
 \fi

\usepackage{ifpdf}
\usepackage{color}
\definecolor{webgreen}{rgb}{0,.5,0}
\definecolor{webbrown}{rgb}{.8,0,0}
\definecolor{emphcolor}{rgb}{0.95,0.95,0.95}

\usepackage{hyperref}
\hypersetup{%
	colorlinks=true,
	linkcolor=webbrown,
	filecolor=webbrown,
	citecolor=webgreen,
	breaklinks=true}
\ifpdf \hypersetup{pdftex,
	pdfstartview=FitH, 
	bookmarksopen=true,
	bookmarksnumbered=true
} \else \hypersetup{dvips} \fi

\usepackage{mathrsfs}

\numberwithin{equation}{section}

\newtheorem{thm}{Theorem}[section]
\newtheorem{proposition}{Proposition}[section]

\newtheorem{remark}{Remark}[section]
\newtheorem{lem}{Lemma}[section]
\newtheorem{exmp}{Example}[section]
\newtheorem{assum}{Assumption}[section]

\numberwithin{remark}{section} \numberwithin{proposition}{section}
\numberwithin{corollary}{section}

\usepackage{mathtools}
\mathtoolsset{showonlyrefs=true}

\title{Continuous-time multi-armed bandits under random intervention times} 

\begin{document}



\author[K. Noba]{Kei Noba$^\dagger$}
\thanks{$\dagger$\, Department of Mathematics, Graduate School of Science, The University of Osaka. 1-1 Machikaneyama, Toyonaka
560-0043, Osaka,  Japan. Email: knoba@math.osaka-u.ac.jp}

	\author[J. L. P\'erez]{Jos\'e-Luis P\'erez$^{*}$}

		\thanks{$*$\, Department of Probability and Statistics, Centro de Investigaci\'on en Matem\'aticasA.C. Calle Jalisco
		s/n. C.P. 36240, Guanajuato, Mexico. Email: jluis.garmendia@cimat.mx}

	\author[K. Yamazaki]{Kazutoshi Yamazaki$^\ddagger$}
\thanks{$\ddagger$\, School of 
Mathematics and Physics, The University of Queensland, St Lucia,
Brisbane, QLD 4072, Australia. Email: k.yamazaki@uq.edu.au, qingyuan.zhang@uq.edu.au}

\author[Q. Zhang]{Qingyuan Zhang$^\ddagger$}

\begin{abstract}
This paper examines multi-armed bandits in which actions are taken at random discrete times. The model consists of $J$ independent arms. When an arm is operated, it must remain active for a random duration, modeled by the inter-arrival time of a (possibly arm-dependent) renewal process. For arms evolving as a L\'evy process, we provide an explicit characterization of the Gittins index, which is known to yield an optimal strategy. Furthermore, when the inter-arrival times are exponential and the arms evolve as either a spectrally negative L\'evy process, a reflected spectrally negative L\'evy process, or a diffusion process, the Gittins index is explicitly characterized in terms of the scale function or diffusion characteristics, respectively. Numerical experiments are performed to support the theoretical results.

		\noindent \small{\noindent  AMS 2020 Subject Classifications: 93E20, 60G51, 90B36 \\
			\textbf{Keywords:} multi-armed bandits, Gittins index, L\'evy processes, stochastic control, optimal stopping}
\end{abstract}

\maketitle

\section{Introduction}
The multi-armed bandit was introduced as an example of a sequential allocation problem over a set of available actions. It has since become a classic model for allocating scarce resources among competing projects under uncertainty. In the standard formulation, each arm represents a project that yields a reward when selected. An agent allocates the scarce resource, typically time, by choosing which project to operate, balancing immediate rewards against future rewards. The problem was first formulated in a discrete-time setting, where arms are selected one at a time and rewards are discounted and summed; see, for example, \cite{Gittins79,Mandelbaum98,Man1986,Whittle1980}. Subsequently, a continuous-time framework was developed, in which both the admissible strategies and the cumulative reward functional were reformulated to accommodate continuous time; see \cite{Dalang90,Karatzas94,KM95,Mandelbaum98,Mandelbaum88}. In both formulations, the objective is to determine an optimal admissible allocation strategy that maximizes the cumulative discounted rewards.

Despite its simplicity, the multi-armed bandit is interesting and challenging enough to have attracted considerable attention, leading to a deep literature on the topic. Notably, an essential solution to the problem was established by Gittins and his collaborators through a series of studies in the discrete-time setting; see \cite{Gittins79,Jones74}. Specifically, Gittins associated an index with each arm that can be evaluated independently of the other arms, and showed that an allocation strategy that selects the arm with the largest current index is optimal. This index strategy, now known as the Gittins index strategy, is significant because it effectively reduces the original multi-dimensional problem to several one-dimensional problems. Following the seminal work of Gittins, the characterization of the Gittins index was extended to the continuous-time setting; see \cite{KM95,Mandelbaum88}.

Most of the literature on multi-armed bandits focuses on the discrete-time setting, where each arm evolves as a stochastic process indexed by discrete time. For each arm, the Gittins index can be characterized as the value of an optimal stopping problem driven by that arm; see \cite{Gittins79}. Although this characterization is conceptually straightforward, closed-form expressions for the index typically do not exist, so numerical methods are required for its evaluation. In contrast to the abundance of studies on discrete-time bandits, fewer works address continuous-time bandits, in which arms evolve continuously and actions can be taken at any time. This relative scarcity may be attributed to the technical nature of formulating the problem in continuous time. Nevertheless, under suitable assumptions on the arm dynamics, existing studies have derived explicit expressions for the Gittins index under continuous time; see \cite{KM95,Mandelbaum88}.

In this work, we consider a version of the multi-armed bandit problem that lies between the discrete-time and continuous-time settings. In our model, the state of each arm follows a continuous-time stochastic process, and when an arm is selected it must remain active for a random renewal time. More precisely, there are $J$ independent arms, each modeled by a continuous-time stochastic process. When an arm is selected, it remains active until the next renewal time, which may depend on the chosen arm. Otherwise, the state of the arm remains unchanged. Similar to the discrete-time setting, a discounted reward is collected when an arm is selected. The discounting accounts for the duration that the arm remains active after selection.

This version of the bandit problem was briefly addressed in \cite{Va}, where the optimality of the Gittins index strategy was established. The present work extends \cite{Va} by pursuing explicit expressions for the Gittins index. In particular, we first obtain an explicit characterization of the index when the arms evolve as general L\'evy processes, which have been used to model controlled stochastic systems (e.g., \cite{hernandez_et_al,perez_yamazaki_bensoussan}) and have recently received renewed attention (e.g., \cite{KDo19,KDo19_drill,Zhang25}). When the renewal decision times are exponential, we obtain explicit expressions for the Gittins index for reflected spectrally negative L\'evy processes and diffusion processes, expressed in terms of the scale functions and the diffusion characteristics, respectively. We also study the asymptotic behavior of the Gittins index as the exponential arrival rate tends to infinity and conduct numerical experiments to complement our analytical results.

We note that a special case of the present problem was studied in \cite{PYM}, in which an explicit characterization of the Gittins index is obtained in the setting where the arms evolve as spectrally negative L\'evy processes and the renewal time is exponential. The present work substantially extends \cite{PYM} by establishing the optimality of the Gittins index strategy and by characterizing the Gittins index for a broader class of processes. Moreover, renewal decision times can be arm-dependent while retaining the optimality of the Gittins index strategy.

Our characterization of the Gittins index for a L\'evy-driven arm makes heavy use of the fluctuation theory of L\'evy processes. Detailed expositions of continuous-time fluctuation identities are given in \cite{Be,K}, and identities for L\'evy processes observed at Poisson arrival times can be found in, e.g. \cite{Albrecher,landriault_potential_2018,Lkabous,Yang}. The remainder of this paper is organized as follows. Section \ref{Sect: prelim} presents the mathematical formulation of our multi-armed bandit problem. Section \ref{section_levy} gives an explicit characterization of the Gittins index for general L\'evy processes. Section \ref{sec_exponential} focuses on the exponential-renewal case and characterizes the Gittins index for spectrally negative L\'evy processes, reflected spectrally negative L\'evy processes and diffusion processes.
Section \ref{Sect: experiment} concludes this work with numerical analysis. Some lengthy proofs are deferred to the appendix.

\section{Preliminaries}\label{Sect: prelim}

\subsection{Model}

Our model consists of $J \in \mathbb{N}$ statistically independent arms that can be selected in any order and one at a time. Let $\mathcal{J} := \{1,2, \ldots, J\}$ denote the set containing these arms. When an arm is selected, a reward is collected, and the arm must be operated without interruption for an independent time interval, the length of which follows an (arm-dependent) distribution $(G^j; j \in \mathcal{J})$. In the sequel, a \textit{period} $t \in \mathbb{N}_0$ (where $\mathbb{N}_0 \coloneqq \mathbb{N} \cup \{0\}$) begins when an arm is selected and ends when it is released.

Each arm $j \in \mathcal{J}$ is associated with a sequence
\[
\{ X^j_s, W^j_s, \mathcal{F}^j_{s}; s \in \mathbb{N}_0 \},
\]
defined on a complete probability space $(\Omega, \mathcal{F}, \mathbb{P})$. Here, $X^j_s$ for $s \in \mathbb{N}_0$ is the reward from arm $j$ when it is selected after $s$ previous operations, and $W^j_s$ models the duration for which it must be active. We assume $W^j = (W^j_s; s \in \mathbb{N}_0)$ is an i.i.d.\ sequence of strictly positive random variables with a common distribution $G^j$. The collection of sigma-algebras $\mathbb{F}^j := (\mathcal{F}^j_s; s \geq 0)$ is a $\mathbb{P}$-completed filtration. 

For $s \in \mathbb{N}_0$,  
we assume $X^j_s$ is $\mathcal{F}^j_{s}$-measurable and $W^j_{s}$ is $\mathcal{F}^j_{s}$-independent, but $W^j_{s}$ is $\mathcal{F}^j_{s+1}$-measurable. Since the arms are independent, we have that $\mathcal{F}^j := \bigvee_{s \geq 0} \mathcal{F}^j_s$ are independent for $j \in \mathcal{J}$.

Our model of a multi-armed bandit strategy follows that of \cite{Man1986}. At the beginning of each period $t \in \mathbb{N}_0$, exactly one arm  is selected. A strategy $\pi$ is given by an $\mathbb{N}_0^J$-valued process $(\pi(t) = [\pi^1(t), \ldots, \pi^J(t)]; t \in \mathbb{N}_0)$, where $\pi^j(t)$ denotes the number of times arm $j$ has been selected during periods $0, 1, \ldots, t-1$ (excluding $t$). Thus, $\pi$ must satisfy:
\begin{align}
    &\pi(0) = [0, \ldots, 0], \label{Eq: pi0}\\
    &t = \pi^1(t) + \cdots + \pi^J(t), \quad t \in \mathbb{N}_0, \label{t_sum}\\
    &\pi(t+1) = \pi(t) + \mathbf{e}_j \quad \textrm{for some } j \in \mathcal{J}, \label{successor}
\end{align}
where $\mathbf{e}_j$ is the $J$-dimensional vector with the $j$th element being $1$ and the others $0$. In addition, for $\mathbf{s} = (s^1, \ldots, s^J) \in \mathbb{N}_0^J$,
\begin{equation}
    \{\pi(t) = \mathbf{s} \} \in \mathcal{F}_{\mathbf{s}} \coloneqq \mathcal{F}_{s^1}^1 \vee \cdots \vee \mathcal{F}_{s^J}^J.\label{Eq: measurable pi}
\end{equation}
With this formulation of strategy $\pi$, the information available at the onset of period $t \in \mathbb{N}_0$ is $\mathcal{F}_{\pi(t)}$, which is well-defined, as confirmed in Section 3 of \cite{Man1986}. In particular, for $t \geq 1$, $\pi(t)$ is $\mathcal{F}_{\pi(t-1)}$-measurable, which follows from \cite[Theorem 3]{Man1986}; together with the independence of $(\mathcal{F}^j)_{j \in \mathcal{J}}$ and the $\mathbb{P}$-completeness of $\mathcal{F}_{\mathbf{s}}$ for $\mathbf{s} \in \mathbb{N}_0^J$, this is equivalent to saying (given \eqref{successor}),
\begin{align*}
    \{\pi(t+1) - \pi(t) = \mathbf{e}_j, \pi(t) = \mathbf{s} \} \in \mathcal{F}_{\mathbf{s}}, \quad
    \textrm{for all } j \in \mathcal{J}, \mathbf{s} \in \mathbb{N}_0^J, t \in \mathbb{N}_0.
\end{align*}
Denote by $\mathrm{j}_t\in \mathcal{J}$ the arm selected at the onset of the $t$-th  period. This random variable can be recovered from $(\pi(s); s \leq t)$ and is therefore $\mathcal{F}_{\pi(t)}$-measurable. By \eqref{successor}, it is also clear that $\pi(t)$ is determined by $\pi(t - 1)$ and $\mathrm{j}_t$, for $t \geq 1$. We use $\Pi$ to denote the set of admissible strategies, including every $\pi$ that satisfies \eqref{Eq: pi0}--\eqref{successor} and the measurability condition in \eqref{Eq: measurable pi}.

For $t \in \mathbb{N}_0$, the \textit{real process time} of period $t$ is $[\mathcal{T}_{t}, \mathcal{T}_{t + 1})$ with $\mathcal{T}_0 = 0$ and
\[
\mathcal{T}_t = \mathcal{W}_0 + \mathcal{W}_1 + \cdots + \mathcal{W}_{t - 1}, \quad t = 1,2, \ldots
\]
Here, $\mathcal{W}_t := W^{\mathrm{j}_t}_{\pi^{\mathrm{j}_t}(t)}$ is the duration of period $t$, during which the selected arm $\mathrm{j}_t$ is active. For $t \in \mathbb{N}_0$, we have that $\mathcal{T}_{t}$ is $\mathcal{F}_{\pi(t)}$-measurable. In particular, $\mathcal{T}_0 = 0 \in \mathcal{F}_{\pi(0)} = \bigvee_{j \in \mathcal{J}} \mathcal{F}_0^j$ and $\mathcal{T}_1 = W^{\mathrm{j}_{0}}_{\pi^{\mathrm{j}_{0}}(0)} \in \mathcal{F}_{\pi(1)}$.

Let $\mathbb{P}^\pi$ be the law induced by an admissible strategy $\pi$ and $\mathbb{E}^\pi$ its expectation operator. Fix a constant discount factor $q > 0$. The expected discounted reward is 
\begin{align}
    V(\pi) = \mathbb{E}^\pi \Big[ \sum_{t = 0}^\infty \mathcal{X}_t \int_{\mathcal{T}_{t}}^{\mathcal{T}_{t + 1}}  e^{-qr} {\rm d} r \Big| \mathcal{F}_{\pi(0)} \Big], \label{V_pi}
\end{align}
where $\mathcal{X}_{t} :=  X^{\mathrm{j}_t}_{\pi^{\mathrm{j}_t}(t)} \in \mathcal{F}_{\pi(t)}$ is the reward from the arm selected in period $t \in \mathbb{N}_0$, which is active on $[\mathcal{T}_t, \mathcal{T}_{t+1})$.  With the generality of $\mathcal{X}_{t}$, various models are incorporated; see Remark \ref{remark_problem2}.  We assume the following integrability condition to ensure that \eqref{V_pi} is finite.
\begin{assum} We assume
    $\mathbb{E} \Big[ \sum_{s=0}^{\infty} |X^j_s |\int_{T_s^j}^{T_{s+1}^j}  e^{-qr} {\rm d} r \Big]$ is finite for $j \in \mathcal{J}$. 
\end{assum}
We want to compute the maximal value $\sup_{\pi \in \Pi} V(\pi)$ and an optimal strategy $\pi^* \in \Pi$ that attains it, if such a strategy exists.
\begin{remark}\label{remark1} 
If $G^j \equiv G$ is uniform across $j$. Then $\mathcal{T}_{t + 1} - \mathcal{T}_{t} = \mathcal{W}_t$ is independent of $\mathcal{F}_{\pi(t)}$, and
\begin{align*}
    V(\pi) &= \mathbb{E}^\pi \Bigg[ \sum_{t = 0}^\infty \mathcal{X}_t
    \mathbb{E}^\pi \Bigg[ \int_{\mathcal{T}_{t}}^{\mathcal{T}_{t + 1}}  e^{-qr} {\rm d} r 
    \Bigg| \mathcal{F}_{\pi(t)}\Bigg]
    \Bigg| \mathcal{F}_{\pi(0)} \Bigg] = \mathbb{E} \Bigg[ \int_{0}^{W}  e^{-qr} {\rm d} r 
    \Bigg] \mathbb{E}^\pi \Bigg[ \sum_{t = 0}^\infty e^{-q \mathcal{T}_{t}}\mathcal{X}_t
    \Big| \mathcal{F}_{\pi(0)} \Bigg],
\end{align*}
with $W \sim G$. Hence, the problem reduces to the maximization of $ \mathbb{E}^\pi \big[ \sum_{t = 0}^\infty e^{-q \mathcal{T}_t} \mathcal{X}_t | \mathcal{F}_{\pi(0)} \big]$ which was considered in  \cite{PYM}.
\end{remark}

\subsection{Gittins index}
Fix arm $j \in \mathcal{J}$. Let $T^j_0 = 0$ and
\begin{align}
T^j_s := W^j_0 + W^j_1 + \cdots +W^j_{s - 1}, \quad s \geq 1. \label{def_T}
\end{align}
Define the Gittins index process
\begin{align}\label{Gittins_general}
\begin{split}
    \Gamma^j_s &:= \operatorname{ess~sup}_{\tau > s} \frac {\mathbb{E} \left[ \sum_{u = s}^{\tau - 1} e^{-q (W^j_s + \cdots + W^j_{u - 1})} X^j_u \Big| \mathcal{F}^j_s \right]} {\mathbb{E} \left[ \sum_{u = s}^{\tau - 1} e^{-q (W^j_s + \cdots + W^j_{u - 1})}  \Big| \mathcal{F}^j_s \right]} =
    \operatorname{ess~sup}_{\tau > s} \frac {\mathbb{E} \left[\sum_{u = s}^{\tau - 1} e^{-q T^j_{u}} X^j_u\Big| \mathcal{F}^j_s \right]} {\mathbb{E} \left[ \sum_{u = s}^{\tau - 1} e^{-q T^j_{u}}  \Big| \mathcal{F}^j_s \right]},
\end{split}
\end{align}
where the maximization is taken over all $\mathbb{F}^j$-stopping times larger than $s$.

Our setting satisfies the conditions required to apply  \cite[Theorem 3.1]{Va}, which establishes the optimality of the Gittins index policy.
\begin{thm}
\label{Thm: sec2 Gittin index general}
The Gittins index strategy, which at each onset of period $t \in \mathbb{N}_0$ operates arm
\[
\arg \max_{j \in \mathcal{J}} \Gamma^j_{\pi^{j}(t)},
\]
is optimal.
\end{thm}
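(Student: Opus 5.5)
The plan is to reduce the statement to the general index theorem of \cite[Theorem 3.1]{Va}, which concerns bandits with random, arm-dependent operation durations and discounting accrued over real time. The first step is to rewrite the reward functional \eqref{V_pi} so that each period contributes a term adapted to the information available at its onset. Because $W^{\mathrm{j}_t}_{\pi^{\mathrm{j}_t}(t)}$ is independent of $\mathcal{F}_{\pi(t)}$ and has law $G^{\mathrm{j}_t}$, conditioning on $\mathcal{F}_{\pi(t)}$ turns the period-$t$ term into
\[
e^{-q\mathcal{T}_t}\,\mathcal{X}_t\,\frac{1-\widehat G^{\mathrm{j}_t}(q)}{q},
\qquad \widehat G^{j}(q):=\mathbb{E}\bigl[e^{-qW^j_0}\bigr].
\]
Arm $j$, after $s$ pulls, therefore delivers the effective reward $R^j_s := X^j_s\,(1-\widehat G^j(q))/q$. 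This reward is $\mathcal{F}^j_s$-measurable, and it is discounted by the arm's own cumulative operation time, since $e^{-q\mathcal{T}_t}$ factorizes as $\prod_j e^{-qT^j_{\pi^j(t)}}$. Assumption~1 together with Fubini justifies exchanging the sum and the conditional expectation. Applying the same conditioning inside the ratio in \eqref{Gittins_general} shows that the constant factor $(1-\widehat G^j(q))/q$ appears in both numerator and denominator and cancels. So the index in \eqref{Gittins_general} coincides with the index of \cite{Va} built from the rewards $R^j_s$ and the durations $W^j_s$.

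The second step is to check the hypotheses of \cite[Theorem 3.1]{Va} one at a time:
\begin{itemize}
\item the arms are independent, since the $\sigma$-fields $\mathcal{F}^j$ are independent;
\item the durations $W^j_s$ are strictly positive, independent of $\mathcal{F}^j_s$, and $\mathcal{F}^j_{s+1}$-measurable;
\item admissible strategies are exactly the multi-parameter stopping-point processes of \cite{Man1986}, by \eqref{Eq: pi0}--\eqref{Eq: measurable pi};
\item the integrability needed there is provided by Assumption~1.
\end{itemize}

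I expect the main obstacle to be showing that the admissible classes match, in particular the strategy-measurability conventions used in \cite{Va} versus \eqref{Eq: measurable pi}. If they differ, the fallback is to run the classical retirement and prevailing-charge argument directly. For each arm, define the prevailing charge $\underline\Gamma^j_s=\min_{u\le s}\Gamma^j_u$. Then show that $\mathbb{E}\bigl[\sum_t e^{-q\mathcal{T}_t}(\mathcal{X}_t-\Gamma^{\mathrm{j}_t}_{\pi^{\mathrm{j}_t}(t)})\bigr]\le 0$ over each excursion between index decreases, with equality along the arm's own index-stopping times. This yields
\[
V(\pi)\le \mathbb{E}\Bigl[\sum_t e^{-q\mathcal{T}_t}\,\underline\Gamma^{\mathrm{j}_t}_{\pi^{\mathrm{j}_t}(t)}\,\frac{1-\widehat G^{\mathrm{j}_t}(q)}{q}\Bigr].
\]
The right-hand side is maximized by interleaving the nonincreasing prevailing-charge sequences in decreasing order, which is exactly what the Gittins strategy does.

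The delicate point in this fallback is the interleaving step when the $G^j$ differ across arms. The real-time weights $e^{-q\mathcal{T}_t}(1-\widehat G^{\mathrm{j}_t}(q))/q$ depend on the selected arm, so the standard sorting lemma has to be applied to reward rates per unit discounted time rather than to per-pull rewards. That is precisely what the ratio form of \eqref{Gittins_general} encodes.
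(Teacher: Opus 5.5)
Your main route is the paper's proof: invoke \cite[Theorem 3.1]{Va} and show that Va's index equals $\Gamma^j_s$. Conditioning the per-period discounted duration $\int_0^{W^j_u}e^{-qr}\,{\rm d}r$ on $\mathcal{F}^j_u$ produces the constant $(1-\widehat G^j(q))/q=\mathbb{E}[\int_0^{W^j_0}e^{-qr}\,{\rm d}r]$ in both numerator and denominator, where it cancels, so the prevailing-charge fallback is not needed. One small correction: when the $G^j$ differ, $\mathcal{W}_t$ is not independent of $\mathcal{F}_{\pi(t)}$ (since $\mathrm{j}_t$ is $\mathcal{F}_{\pi(t)}$-measurable); only its conditional law given $\mathcal{F}_{\pi(t)}$ is $G^{\mathrm{j}_t}$, which is all your computation actually uses.
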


\begin{proof}
For $s \in \mathbb{N}_0$ and arm $j \in \mathcal{J}$, define
\begin{align*}
    &\tilde{\Gamma}^j_s \coloneqq \operatorname{ess~sup}_{\tau > s}
     \frac {\mathbb{E} \left[ \sum_{u = s}^{\tau-1} e^{-q (W^j_s + \cdots + W^j_{u-1})} X^j_u \int_0^{W^j_u}  e^{-qr} {\rm d} r \Big| \mathcal{F}^j_s \right]} {\mathbb{E} \left[ \sum_{u = s}^{\tau-1} e^{-q (W^j_s + \cdots + W^j_{u-1})} \int_0^{W^j_u}  e^{-qr} {\rm d} r \Big| \mathcal{F}^j_s \right]}.
\end{align*}
By \cite[Theorem 3.1]{Va}, a strategy that operates an arm with the highest index $\tilde{\Gamma}^j_s$ is optimal. Since $\tau$ is a stopping time,
\begin{align*}
    &\mathbb{E} \left[ \sum_{u = s}^{\tau-1} e^{-q (W^j_s + \cdots + W^j_{u-1})} X^j_u \int_0^{W^j_u}  e^{-qr} {\rm d} r \Big| \mathcal{F}^j_s \right] \\
    &=\mathbb{E} \left[ \sum_{u = s}^{\infty} 1_{\{u \leq \tau-1\}} e^{-q (W^j_s + \cdots + W^j_{u-1})} X^j_u \right. \left.\mathbb{E} \left[ \int_0^{W^j_u}  e^{-qr} {\rm d} r \Big| \mathcal{F}^j_u \right] \Big| \mathcal{F}^j_s \right] \\
    &= \mathbb{E} \left[ \int_0^{W^j_1}  e^{-qr} {\rm d} r  \right] \mathbb{E} \left[ \sum_{u = s}^{\infty} 1_{\{t \leq \tau-1\}} e^{-q (W^j_s + \cdots + W^j_{u-1})} X^j_u  \Big| \mathcal{F}^j_s \right],
\end{align*}
where, in the last step, we used the assumption that $(W^j_u)$ are i.i.d. random variables and $W^j_u$ is independent of $\mathcal{F}^j_u$ for each $u \in \mathbb{N}_0$. Applying the same treatment to the denominator, we obtain $\tilde{\Gamma}^j_s = \Gamma^j_s$. The second equality of \eqref{Gittins_general} follows from the definition $T^j_s$ as in \eqref{def_T} and the measurability of $(W^j_u)_{u < s}$ with respect to $\mathcal{F}^j_s$.
\end{proof}

\subsection{Markovian case} \label{subsection_Markov}
We now consider bandit problems in which the arms evolve as continuous-time Markov processes. For $j \in \mathcal{J}$, let $Y^j := (Y^j(t); t \geq 0)$ be an $\mathbb{R}$-valued continuous-time Markov process and $R^j: \mathbb{R} \to \mathbb{R}$ be a measurable function. Denote by $X^j \coloneqq (R^j(Y^{j}(T^j_{s})); s \in \mathbb{N}_0)$ the corresponding reward process, and let $\mathcal{F}^j_s$ be the $\mathbb{P}$-completion of $\sigma(Y^{j}(T_{0}^j), \ldots, Y^j(T^j_{s}), W^j_{0}, \ldots, W^j_{s-1})$, for $s \in \mathbb{N}_0$. In this case, \eqref{V_pi} reduces to
\begin{align} \label{V_pi_Markov}
    V(\pi) = \mathbb{E}^\pi \Big[ \sum_{t = 0}^\infty R^{\mathrm{j}_t}(Y^{\mathrm{j}_t}(T^{\mathrm{j}_t}_{\pi^{\mathrm{j}_t}(t)})) \int_{\mathcal{T}_{t}}^{\mathcal{T}_{t + 1}}  e^{-qr} {\rm d} r \Big| \mathcal{F}_{\pi(0)} \Big].
\end{align}
Let $\mathbb{P}_x^j$ be the probability measure for arm $j$ under which $Y^j(0) = x$. For $x = 0$, we simply write $\mathbb{P}^j$. Under this Markovian setting, the Gittins index \eqref{Gittins_general} reduces to
\begin{align*}
    \Gamma^j_s = \Gamma^j(Y(T^j_{s})), \quad s \in \mathbb{N}_0,
\end{align*}
where, for $x \in \mathbb{R}$, 
\begin{align} \label{Gamma_x_Markov}
    \Gamma^j(x) := \sup_{\tau > 0} \frac {\mathbb{E}_x^j \left[ \sum_{s = 0}^{\tau - 1} e^{-q T^j_{s}} R^j(Y^j(T^j_{s})) \right]} {\mathbb{E}_x^j \left[ \sum_{s = 0}^{\tau - 1} e^{-q T^j_{s}} \right]}. 
\end{align}
The above optimal stopping problem can be explicitly solved under the monotonicity and comparison conditions imposed in Assumption \ref{assump_comparison}. These assumptions are standard in continuous-time frameworks such as \cite{Mandelbaum98}.
\begin{assum} \label{assump_comparison}
For $j \in \mathcal{J}$, $R^j: \mathbb{R} \to \mathbb{R}$ is strictly increasing  and $Y^j$ is order-preserving in the sense that, given two paths $Y^j_x$ and $Y^j_y$, if $Y^j_x(0) = x < y = Y^j_y(0)$, then $Y^j_x(t) \leq Y^j_y(t)$ for all $t \geq 0$ almost surely.  
\end{assum}

Below, we generalize Section 3.3 of \cite{PYM}.  
\begin{proposition} \label{prop_opt_stopping_time}
    Under Assumption \ref{assump_comparison}, the optimal stopping problem \eqref{Gamma_x_Markov} is solved by
    \[
    \tau_x^- = \tau_x^{-,j} := \inf \{ s \geq 1:   Y^j(T^j_{s}) \leq x \}.
    \] 
Hence,
    \begin{equation}
    \Gamma^j(x) = \frac{\mathbb{E}_x \left[\sum^{\tau_x^- - 1}_{k = 0} e^{-qT^j_k} R^j(Y^j(T^j_k))\right]}{\mathbb{E}_x \left[\sum^{\tau_x^- - 1}_{k = 0} e^{-q T^j_k}\right]}. \label{Gamma_tilde}
\end{equation}
\end{proposition}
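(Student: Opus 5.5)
We fix the arm $j$ and drop the superscript. We work with the discrete-time Markov chain $Z_s := Y(T_s)$ together with the discount factors $e^{-qT_s}$. By the i.i.d.\ structure of $(W_s)$ and the Markov property of $Y$, the pair $(Z_s, T_s)$ is a Markov chain with stationary, multiplicative discounting. The argument then follows the standard reduction of the Gittins index to a retirement (calibration) problem. For a constant $\gamma\in\mathbb{R}$, set
\[
U_\gamma(x) := \sup_{\tau>0}\mathbb{E}_x\Big[\sum_{s=0}^{\tau-1} e^{-qT_s}\big(R(Z_s)-\gamma\big)\Big].
\]
By the definition \eqref{Gamma_x_Markov}, we have $\Gamma(x)\ge \gamma$ iff $U_\gamma(x)\ge 0$; for strict inequalities the same holds with $>$. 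In particular, $\Gamma(x) = \sup\{\gamma : U_\gamma(x) > 0\}$. Moreover, for $\gamma=\Gamma(x)$, a stopping time $\tau$ attains the ratio in \eqref{Gamma_x_Markov} iff it attains $U_{\Gamma(x)}(x)=0$. It therefore suffices to show that $\tau^-_x$ is optimal for the problem $U_{\Gamma(x)}$ started at $x$.

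The key step is a monotonicity statement. Assumption \ref{assump_comparison} gives that $Y$ is order-preserving and $R$ is strictly increasing, and the $W_s$ do not depend on the starting point. Coupling the chains started at $x<y$ on the same probability space, for any fixed $\tau$ (a functional of the driving noise) we get $R(Z^x_s)\le R(Z^y_s)$ pathwise. Hence the numerator in \eqref{Gamma_x_Markov} is nondecreasing in the starting point while the denominator is unchanged, so $\Gamma$ is nondecreasing. Here I need to be careful that $\tau$ may be a function of the chain's observations and not of the noise. The remedy is to enlarge the filtration to include the driving randomness (the full path of $Y$ and the $W$'s), which is harmless because the Gittins ratio sup over the larger class of randomized stopping times equals the sup over $\mathbb{F}$-stopping times by the Markov property. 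In the same way, $U_\gamma$ is nondecreasing in $x$.

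Next I apply the dynamic-programming (Snell envelope) characterization of $U_\gamma$. After the mandatory first step, it is optimal to continue at state $z$ iff $U_\gamma(z)>0$. By the previous step, this continuation region is an up-set, $\{z : U_\gamma(z) > 0\}$, i.e.\ of the form $(b,\infty)$ or $[b,\infty)$. With $\gamma=\Gamma(x)$, I claim the continuation region is exactly $\{z : \Gamma(z) > \Gamma(x)\}$ up to its boundary. Indeed, for $z\le x$ we have $\Gamma(z)\le\Gamma(x)$, so $U_{\Gamma(x)}(z)\le 0$ and stopping is optimal. For $z>x$, I need $U_{\Gamma(x)}(z)\ge 0$, which follows from monotonicity of $U_\gamma$ together with $U_{\Gamma(x)}(x) = 0$. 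At states $z>x$ where $U_{\Gamma(x)}(z) = 0$, continuing and stopping are both optimal, so continuing while $Z_s>x$ is still optimal. Hence $\tau^-_x=\inf\{s\ge 1: Z_s\le x\}$ attains $U_{\Gamma(x)}(x)$, and substituting back gives \eqref{Gamma_tilde}.

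The main obstacle is the existence and attainment step: justifying that $U_\gamma$ satisfies the Bellman equation and that the optimal stopping time is the first entry into $\{U_\gamma \le 0\}$ (no need to wait forever). I would handle this via the integrability assumption, which makes the tail $\sum_{s\ge n} e^{-qT_s}|R(Z_s)-\gamma|\to 0$ in $L^1$. The standard Snell envelope theorem for discounted problems then yields optimality of the first-entry time, with $\tau=\infty$ allowed and the sums in \eqref{Gamma_tilde} interpreted accordingly. A secondary subtlety is the strict-versus-weak boundary ($\le x$ versus $<x$). I would resolve it by using the tie-breaking remark above, so that either choice is optimal and the one in the statement is valid.
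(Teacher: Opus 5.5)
Your proof is correct and follows the paper's route (Snell envelope theory plus monotonicity of the index coming from Assumption \ref{assump_comparison}), with one difference: you pass through the calibration problem $U_{\Gamma(x)}$ and handle ties using only weak monotonicity, whereas the paper cites \cite{PYM} for optimality of $\inf\{s\ge1:\Gamma(Y(T_s))\le\Gamma(x)\}$ and uses strict monotonicity of the index to identify that time with $\tau_x^-$. One small caveat: your filtration-enlargement remedy needs the coupled chain started at $y$ to remain Markov in the joint filtration (true for the common-noise couplings in the paper's examples); alternatively, you can prove monotonicity of $U_\gamma$ by value iteration from the stochastic monotonicity of the one-step kernel, which uses only the order-preserving coupling at the independent fixed time $W$.
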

\begin{proof} 
We provide only a sketch of the proof, as most of the arguments follow those in \cite{PYM}. Let $\hat{\Gamma}^j(x)$ denote the right hand side of \eqref{Gamma_tilde}. Using Snell's optimal stopping theory and following the same step as in the proof of \cite[Lemma 1]{PYM},  the problem
\eqref{Gamma_x_Markov} is solved by the stopping time
\begin{equation*}
\hat{\tau}^-_x=\inf\{s\geq1: \hat{\Gamma}^j(Y^j(T^j_s)) \leq \hat{\Gamma}^j(x)\}.
\end{equation*}
Under Assumption \ref{assump_comparison}, $x \mapsto\hat{\Gamma}^j(x)$ is strictly increasing,  and thus $\hat{\tau}^-_x = \tau^-_x$, consequently $\hat{\Gamma}^j = \Gamma^j$, as desired.
\end{proof}

\begin{remark} \label{remark_tau_tilde}
In Proposition \ref{prop_opt_stopping_time}, it can be verified that the maximum \eqref{Gamma_x_Markov} is also achieved by $\tilde{\tau}_x^- := \inf \{ s \geq 1: Y^j(T^j_{s}) < x \}$ with strict inequality.
\end{remark}
\begin{remark} \label{remark_problem2}
	As a variant of \eqref{V_pi_Markov}, it is of interest to consider the maximization of 
	\begin{align*}
	&\widetilde{V}(\pi) := \mathbb{E}^\pi \left[ \sum_{t = 0}^\infty  \int_{\mathcal{T}_{t}}^{\mathcal{T}_{t + 1}}  e^{-qu} r^{\mathrm{j}_t}(Y^{\mathrm{j}_t}(T^{\mathrm{j}_t}_{\pi^{\mathrm{j}_t}(t)} + u - \mathcal{T}_t)) {\rm d} u \Big| \mathcal{F}_{\pi(0)} \right],
	\end{align*}
for strictly increasing $r^{j}: \mathbb{R} \to \mathbb{R}$, $j \in \mathcal{J}$. In other words, the reward evolves continuously according to the state evolution of the selected arm. The strong Markov property gives
	\begin{align*}
		\widetilde{V}(\pi) &= \mathbb{E}^\pi \Biggl[ \sum_{t = 0}^\infty  \mathbb{E}^\pi \Biggl[ \int_{\mathcal{T}_{t}}^{\mathcal{T}_{t + 1}}  e^{-qu}  r^{\mathrm{j}_t}(Y^{\mathrm{j}_t}(T^{\mathrm{j}_t}_{\pi^{\mathrm{j}_t}(t)} + u - \mathcal{T}_t)) {\rm d} u \Big| \mathcal{F}_{\pi(t)} \Biggr] \Big| \mathcal{F}_{\pi(0)} \Biggr] \\
		&= \mathbb{E}^\pi \left[ \sum_{t = 0}^\infty  e^{-q \mathcal{T}_{t}} R^{\mathrm{j}_t} (Y^{\mathrm{j}_t}(T^{\mathrm{j}_t}_{\pi^{\mathrm{j}_t}(t)})) \Big| \mathcal{F}_{\pi(0)} \right]
	\end{align*}
	where 
	$
	R^j(x) := \mathbb{E}_x^j \left[ \int_{0}^{W_1^j}  e^{-qt} r^j(Y^j(t)) {\rm d} t \right]$,  $j \in \mathcal{J}$ and $x \in \mathbb{R}$.
	Hence, in view of Remark \ref{remark1}, when $G^j$ is uniform across $j$, this is a special case of the problem considered in this section.
	In particular, for the case of $W^j_1 \sim \exp(\lambda)$, we have $R^{j}(x) = \mathbb{E}_{x}^j \left[ \int_{0}^{\infty} e^{-(q+\lambda)t} r^{j}(Y^{j}(t)) {\rm d} t \right]$.
\end{remark}

In the remainder of the paper, we focus on the Markov case under Assumption \ref{assump_comparison} and derive an explicit expression for the Gittins index \eqref{Gamma_x_Markov} of certain continuous-time process $Y$.

\section{L\'evy process case} \label{section_levy}
In this section, we consider $Y^j$ as a one-dimensional L\'evy process with a characteristic exponent 
\begin{align}\label{characteristic_X}
    \Psi^j(\theta):=-\log \mathbb{E}^j \left[e^{\mathrm{i} \theta Y^j(1)}\right], \quad \theta\in\mathbb{R}.
\end{align}
Below, we omit the superscript $j$ for conciseness.
By Proposition \ref{prop_opt_stopping_time} and the spatial homogeneity of L\'evy process, the Gittins index \eqref{Gamma_x_Markov} becomes
\begin{align*}
	\Gamma(x)=\int_{[0, \infty)}R(x+y)\mu({\rm d} y),\qquad x\in\mathbb{R},
\end{align*}
where $\mu$ is a probability measure on $([0,\infty), \mathcal{B}[0, \infty))$, given by
\begin{align}\label{mu_1}
    \int_{[0,\infty)}f(y)\mu({\rm d} y)&=\frac{\mathbb{E} \Big[ \sum_{k=0}^{\tau_0^- -1 }e^{-q T_k} f(Y(T_k))\Big]}{\mathbb{E} \Big[ \sum_{k=0}^{\tau_0^ --1} e^{-q T_k} \Big]}, 
\end{align}
for any measurable function $f:\mathbb{R} \to \mathbb{R}$. Here, we obtain a concise expression of the Fourier transform of $\mu$.

Let $S_n := Y(T_n)$ satisfying $S_0=0$ under $\mathbb{P}$ and $S_n=\sum_{k=1}^n(Y(T_k)-Y(T_{k-1}))$ for $n \geq 1$. Let $N :=\inf\{n\geq 1: S_n> 0\}$ and $\hat{N}:=\inf\{n\geq 1: S_n \leq 0\}$. Note that $\tau_0^- = \hat{N}$ under $\mathbb{P}$. For $q>0, \theta\in\mathbb{R}, r \in (0,1]$, let
\begin{equation} \label{kappa_varphi}
    \kappa(q,\theta) :=\mathbb{E}\left[e^{-qT_1+\mathrm{i} \theta S_1}\right], \quad \varphi(q,\theta; r) := \mathbb{E}\left[r^N e^{-q T_{N}+\mathrm{i} \theta S_N}\right], \quad 
    \hat{\varphi}(q,\theta;r) := \mathbb{E}\left[r^{\hat{N}} e^{-q T_{\hat{N}}+\mathrm{i} \theta S_{\hat{N}}}\right].
\end{equation} 
In particular, let $\varphi(q,\theta) := \varphi(q,\theta; 1)$ and $\hat{\varphi}(q,\theta) := \hat{\varphi}(q,\theta; 1)$.

Fix $r \in (0,1)$ and $p:= 1-r$.
Consider an $\mathbb{N}_0$-valued geometric random variable $\Xi_{p}$ with parameter $p$ (so $\mathbb{P}(\Xi_p > n) = r^{n+1}$), independent of the process $(T_n,S_n)_{n\geq1}$. Define  the first  visit of the random walk $S$ to its maximum until time $\Xi_{p}$ by
\begin{align*}
	G :=\min\left\{k=0,1\dots,\Xi_{p}:S_k=\max_{i=0,1,\dots,\Xi_{p}}S_i\right\} .
\end{align*}
We also define the last visit   of $S$ to its minimum by 
\begin{align*}
	D :=\max\left\{k=0,1\dots,\Xi_{p}:S_k=\min_{i=0,1,\dots,\Xi_{p}}S_i \right\} .
\end{align*}
We provide a version of the Wiener-Hopf factorization for our setting, which can be obtained by adapting the proof of Theorem 1 in \cite{greenwood} (see also \cite{kyprianou_note}). For completeness, the proof is given in Appendix \ref{proof_wf}. 
\begin{lem} \label{WF}
	The pair of random variables $(T_G,S_G)$ is independent of $(T_{\Xi_{p}}-T_G, S_{\Xi_{p}}-S_G)$ and the latter has the same distribution as $(T_D,S_D)$.
	Additionally, for $r \in (0,1]$,
	\begin{align}\label{spitzer_ran_1}
		1-r\kappa(q,\theta) = (1-\varphi(q,\theta; r)) (1-\hat{\varphi}(q,\theta;r)).
	\end{align}
\end{lem}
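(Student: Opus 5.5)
The plan is to view $(T_n,S_n)_{n\ge 0}$ as a bivariate random walk whose increments $(W_{k-1}, Y(T_k)-Y(T_{k-1}))$ are i.i.d. Independence comes from the i.i.d.\ structure of $(W_s)$ and from the independent, stationary increments of $Y$ together with the strong Markov property at the independent times $T_k$. With this in hand, the setting is exactly that of Greenwood--Pitman-type excursion arguments for random walks killed at an independent geometric time. The first coordinate $T$ is a nondecreasing additive functional that is carried along, so we will work with the joint transform $\mathbb{E}[e^{-q T + \mathrm{i}\theta S}]$ throughout.

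\textbf{Step 1 (splitting at ladder epochs).} Define the strict ascending ladder epochs $N_0=0$, $N_{k+1}=\inf\{n>N_k: S_n>S_{N_k}\}$. By the strong Markov property of the walk at $N_k$, the ladder excursions $(N_{k+1}-N_k, T_{N_{k+1}}-T_{N_k}, S_{N_{k+1}}-S_{N_k})$ are i.i.d.\ copies of $(N,T_N,S_N)$, on the event that they are finite. We then have $G=N_k$ on the event $\{N_k\le \Xi_p<N_{k+1}\}$. By the memoryless property of $\Xi_p$ (namely $\mathbb{P}(\Xi_p\ge n+m\mid \Xi_p\ge n)=r^m$), conditioning on $\{\Xi_p\ge N_k\}$ restarts a fresh geometric clock independent of the past. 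This yields a factorization
\[
\mathbb{E}\bigl[F(T_G,S_G)\,H(T_{\Xi_p}-T_G,S_{\Xi_p}-S_G)\bigr]=\sum_{k\ge0}\mathbb{E}\bigl[r^{N_k}F(T_{N_k},S_{N_k})\bigr]\,\mathbb{E}\bigl[H(T_{\Xi_p},S_{\Xi_p});\,\Xi_p<N\bigr].
\]
The product structure gives the claimed independence.

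\textbf{Step 2 (identifying the post-$G$ law with $(T_D,S_D)$).} For this we use time reversal of the finite path $(S_0,\dots,S_n)$, i.e.\ $\tilde S_k=S_n-S_{n-k}$, $\tilde T_k=T_n-T_{n-k}$. This reversal preserves the joint law of the increments, since they are i.i.d. It maps the event $\{\Xi_p=n,\ G=m\}$ (the first maximum is at $m$) onto the event that the last minimum of the reversed path is at $n-m$. Moreover, $(T_n-T_m,S_n-S_m)$ becomes $(\tilde T_{n-m},\tilde S_{n-m})$. Summing over $n$ with the weights $pr^n$ gives $(T_{\Xi_p}-T_G,S_{\Xi_p}-S_G)\overset{d}{=}(T_D,S_D)$. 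The main care point here is the matching of strict and weak inequalities: $N$ uses $>$, $\hat N$ uses $\le$, $G$ is the first maximum and $D$ is the last minimum. I expect this bookkeeping to be the main obstacle, and I will check it on the reversed path directly.

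\textbf{Step 3 (the factorization identity).} First, $\mathbb{E}[e^{-qT_{\Xi_p}+\mathrm{i}\theta S_{\Xi_p}}]=\sum_n p r^n\kappa^n=p/(1-r\kappa)$. Summing the geometric series in Step 1 gives $\mathbb{E}[e^{-qT_G+\mathrm{i}\theta S_G}]=\mathbb{E}[\cdot;\Xi_p<N]\,/(1-\varphi(q,\theta;r))$. Evaluating at $q=0,\theta=0$ shows $\mathbb{P}(\Xi_p<N)=1-\mathbb{E}[r^N]$, so this transform is $(1-\mathbb{E}r^N)/(1-\varphi(q,\theta;r))$. The analogous argument with weak descending ladder epochs gives $(1-\mathbb{E}r^{\hat N})/(1-\hat\varphi(q,\theta;r))$ for the transform of $(T_D,S_D)$. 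The independence from Steps 1--2 then yields
\[
\frac{p}{1-r\kappa}=\frac{(1-\mathbb{E}r^N)(1-\mathbb{E}r^{\hat N})}{(1-\varphi)(1-\hat\varphi)}.
\]
Setting $q=\theta=0$ gives $p=(1-\mathbb{E}r^N)(1-\mathbb{E}r^{\hat N})$, and hence \eqref{spitzer_ran_1} holds for $r\in(0,1)$. For $r=1$ we let $r\uparrow1$ and use monotone/dominated convergence. This is legitimate because $q>0$ and $T_n\ge T_1>0$, so all transforms stay bounded by $\mathbb{E}[e^{-qT_1}]<1$ in modulus on the relevant events.
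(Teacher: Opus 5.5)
\textbf{Gap.} The last step of Step 3 fails. Evaluating your displayed identity at $q=\theta=0$ does not yield $p=(1-\mathbb{E}r^N)(1-\mathbb{E}r^{\hat N})$. Since $\kappa(0,0)=1$, the left side is $p/(1-r)=1$. Since $\varphi(0,0;r)=\mathbb{E}[r^N]$ and $\hat\varphi(0,0;r)=\mathbb{E}[r^{\hat N}]$, the right side is also exactly $1$. Both sides are transforms of probability laws (of $(T_{\Xi_p},S_{\Xi_p})$, and of the independent pair $(T_G,S_G)$, $(T_D,S_D)$), so both equal $1$ at the origin and the evaluation is a tautology.

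What you have actually proved is therefore only $1-r\kappa=c(r)\,(1-\varphi)(1-\hat\varphi)$, with $c(r)=p/\bigl((1-\mathbb{E}r^N)(1-\mathbb{E}r^{\hat N})\bigr)$ still undetermined. Pinning down this constant is exactly Spitzer's identity $p=p_+p_-$. The paper does not derive it; it imports it from Greenwood (identity (1) there).

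\textbf{Fix within your framework.} The time coordinate is strictly increasing, since $T_1=W_0>0$ a.s., and this supplies the missing normalization. Either of the following works.

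\emph{Limit in $q$.} Cross-multiply your identity to get $p(1-\varphi)(1-\hat\varphi)=(1-\mathbb{E}r^N)(1-\mathbb{E}r^{\hat N})(1-r\kappa)$, valid for all $q>0$. Then let $q\to\infty$. By dominated convergence, $|\kappa|,|\varphi|,|\hat\varphi|\le\mathbb{E}[e^{-qT_1}]\to0$, which gives $p=(1-\mathbb{E}r^N)(1-\mathbb{E}r^{\hat N})$.

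\emph{Directly from Step 1.} The events $\{T_G=0\}=\{G=0\}$ and $\{T_{\Xi_p}-T_G=0\}=\{G=\Xi_p\}$ are independent by your factorization. Their intersection is $\{\Xi_p=0\}$. Combined with Step 2, this gives $p=\mathbb{P}(G=0)\,\mathbb{P}(D=0)=\mathbb{P}(\Xi_p<N)\,\mathbb{P}(\Xi_p<\hat N)$.

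With this repair, the rest of your argument is sound and follows the paper's route. Your ladder-epoch splitting with the memoryless clock gives a more explicit proof of independence than the paper's one-line remark. Your reversal $\tilde S_k=S_n-S_{n-k}$ correctly carries first maximum to last minimum (Feller duality). The passage $r\uparrow1$ by dominated convergence is also fine.
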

The Fourier transforms of the measure $\mu$ given in \eqref{mu_1} can be expressed as follows.
\begin{proposition}\label{Gitt_gral}
We have
	\begin{align}\label{mu_1a}
		\int_{[0,\infty)}e^{\mathrm{i} \theta y}\mu({\rm d} y) = \frac{1-\hat{\varphi}(q,\theta)}{1-\kappa(q,\theta)} \frac {1-\kappa(q,0)} {1-\hat{\varphi}(q,0)}  =\frac{1-\varphi(q,0)}{1- \varphi(q,\theta)}.
	\end{align}
\end{proposition}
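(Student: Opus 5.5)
The first equality comes from computing the numerator and denominator of \eqref{mu_1} separately, with $f(y)=e^{\mathrm{i}\theta y}$, by a renewal decomposition at $\hat N=\tau_0^-$. The second equality then follows from the Wiener--Hopf identity \eqref{spitzer_ran_1} of Lemma \ref{WF} with $r=1$.

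For the numerator, write
\[
\sum_{k=0}^{\hat N-1}e^{-qT_k+\mathrm{i}\theta S_k}=\sum_{k=0}^{\infty}e^{-qT_k+\mathrm{i}\theta S_k}-\sum_{k=\hat N}^{\infty}e^{-qT_k+\mathrm{i}\theta S_k}.
\]
The increments $(T_k-T_{k-1},S_k-S_{k-1})$ are i.i.d., because the $W_k$ are i.i.d. and $Y$ has stationary independent increments, and $q>0$. Hence the first sum has expectation $\sum_k\kappa(q,\theta)^k=(1-\kappa(q,\theta))^{-1}$, since $|\kappa(q,\theta)|\le \mathbb{E}[e^{-qT_1}]<1$. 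For the second sum, $\hat N$ is a stopping time of the random walk $(T_n,S_n)$. By the strong Markov (renewal) property, the post-$\hat N$ walk is an independent copy. So the second sum has expectation $\mathbb{E}[e^{-qT_{\hat N}+\mathrm{i}\theta S_{\hat N}};\hat N<\infty]\,(1-\kappa(q,\theta))^{-1}=\hat\varphi(q,\theta)/(1-\kappa(q,\theta))$. Here the factor $e^{-qT_{\hat N}}$ vanishes on $\{\hat N=\infty\}$, since $T_n\to\infty$. The numerator therefore equals $(1-\hat\varphi(q,\theta))/(1-\kappa(q,\theta))$. Setting $\theta=0$ gives the denominator $(1-\hat\varphi(q,0))/(1-\kappa(q,0))$, and the ratio is the middle expression in \eqref{mu_1a}. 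Dominated convergence, using $\sum_k e^{-qT_k}$ integrable, justifies exchanging expectation and sum.

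For the second equality, apply Lemma \ref{WF} with $r=1$. The lemma is stated for $r\in(0,1]$; if needed, one can pass to the limit $r\uparrow1$ by dominated convergence, since all quantities carry the factor $e^{-qT}$. This gives
\[
1-\kappa(q,\theta)=(1-\varphi(q,\theta))(1-\hat\varphi(q,\theta)),
\]
and likewise at $\theta=0$. We need $1-\hat\varphi(q,\theta)\neq0$. This holds because $|\hat\varphi(q,\theta)|\le\mathbb{E}[e^{-qT_{\hat N}};\hat N<\infty]<1$, as $T_{\hat N}\ge T_1>0$. The same bound gives $|\varphi|<1$. Dividing, $(1-\hat\varphi(q,\theta))/(1-\kappa(q,\theta))=1/(1-\varphi(q,\theta))$, and at $\theta=0$ the reciprocal factor becomes $1-\varphi(q,0)$. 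Multiplying yields $(1-\varphi(q,0))/(1-\varphi(q,\theta))$.

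The main obstacle is the rigorous renewal step at $\hat N$, where $\hat N$ may be infinite. It requires the strong Markov property of the pair walk, with the post-$\hat N$ increments independent of $\mathcal F_{\hat N}$ and equal in law to the original increments, together with the integrability needed to split the infinite sum. Everything else is algebra on Lemma \ref{WF}.
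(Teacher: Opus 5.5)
Your proposal is correct and follows essentially the same route as the paper: a renewal decomposition at the weak descending ladder time, giving $\mathbb{E}\bigl[\sum_{k=0}^{\hat N-1}e^{-qT_k+\mathrm{i}\theta S_k}\bigr]=(1-\hat\varphi(q,\theta))/(1-\kappa(q,\theta))$, followed by Lemma \ref{WF} with $r=1$. The only difference is cosmetic. You split once at $\hat N$ and use the strong Markov property a single time, whereas the paper sums over all ladder epochs $\hat\tau_n$ and identifies the geometric series $\sum_n\hat\varphi(q,\theta)^n$. Your version is slightly more economical, and it is more explicit than the paper about why $1-\hat\varphi$ and $1-\varphi$ do not vanish.
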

\begin{proof}
Define the ascending and weak descending ladder height processes $(l,H):=\{(l_n,H_n) := (T_{\tau_n}, S_{\tau_n}); n \in \mathbb{N}_0\}$ and $(\hat{l},\hat{H}):=\{(\hat{l}_n,\hat{H}_n) := (T_{\hat{\tau}_n}, S_{\hat{\tau}_n}); n \in \mathbb{N}_0 \}$. Here, $\tau_n$ and $\hat{\tau}_n$ are recursively given by $\tau_0= \hat{\tau}_0= 0$, and for $n\geq1$,
	\begin{align*}
		\tau_n&:=
		\begin{cases}
			\inf \{k>\tau_{n-1}: S_k > H_{n-1}\}, &  \text{ if $\tau_{n-1}<\infty$}, \\
			\infty, & \text{ if $\tau_{n-1}=\infty$,} 
		\end{cases}
\\
		\hat{\tau}_n&:=
		\begin{cases}
			\inf \{k >\hat{\tau}_{n-1}: S_k  \leq \hat{H}_{n-1}\}, & \text{ if $\hat{\tau}_{n-1}<\infty$}, \\
			\infty, &  \text{ if $\hat{\tau}_{n-1}=\infty$}.
		\end{cases}
\end{align*}
Let $l_n = H_n = \infty$ when $\tau_n = \infty$ and $\hat{l}_n = \hat{H}_n = \infty$ when $\hat{\tau}_n = \infty$. By the strong Markov property, the process $(l,H)$ has independent and stationary increments, and so does  $(\hat{l},\hat{H})$, with 
	\begin{equation} \label{stationary_increments}
			(l_{n+1} - l_n, H_{n+1} - H_n) \sim (T_N, S_N), \quad (l_{n+1} - \hat{l}_n, \hat{H}_{n+1} - \hat{H}_n) \sim (T_{\hat{N}}, S_{\hat{N}}).
	\end{equation}
   Fix $q > 0$ and $\theta \in \mathbb{R}$. We have 
\begin{align}\label{wh3}
\begin{split}
    &\mathbb{E}\left[\sum_{n=0}^{\infty}e^{-qT_n+\mathrm{i} \theta S_n }\right]= \mathbb{E}\left[\sum_{n=0}^{\infty} 1_{\{\hat{\tau}_n < \infty \}}\mathbb{E}\left[
    \sum_{m= \hat{\tau}_n}^{\hat{\tau}_{n+1}-1} e^{-qT_m+\mathrm{i} \theta S_m } \Big| \mathcal{F}_{\hat{\tau}_n} \right] \right],
\end{split}
\end{align}
where $\mathcal{F}_{\hat{\tau}_n} := \{ A \in \mathcal{F}: A \cap \{ \hat{\tau}_n = k \} \in \sigma(T_1, S_1 , \ldots, T_k, S_k), k  \in \mathbb{N}_0 \}$. Because $S_n = Y(T_n)$ for $n \ge 1$, and since $Y$ has independent and stationary increments and the family $\{T_n\}_{n \in \mathbb{N}}$ is independent of $Y$, it follows that on the set $\{\hat{\tau}_n < \infty\}$,
\begin{align}\label{wh4}
\begin{split}
    \mathbb{E}\left[\sum_{m= \hat{\tau}_n}^{\hat{\tau}_{n+1}-1} e^{-qT_m+\mathrm{i} \theta S_m} \Big| \mathcal{F}_{\hat{\tau}_n} \right] &=e^{-q \hat{l}_n+ \mathrm{i} \theta \hat{H}_n} \mathbb{E}\left[\sum_{m= \hat{\tau}_n}^{\hat{\tau}_{n+1}-1}e^{-q(T_m- \hat{l}_n)+\mathrm{i} \theta (S_m- \hat{H}_n))}\Big| \mathcal{F}_{\hat{\tau}_n}\right] \\
    & =e^{-q \hat{l}_n+\mathrm{i}  \theta \hat{H}_n} A(q, \theta)
\end{split}
\end{align}
where
\[
    A(q,\theta) := \mathbb{E}\left[\sum_{m=0}^{{\tau_0^--1}}e^{-qT_m+\mathrm{i} \theta S_m}\right].
\]
Using \eqref{wh4} in \eqref{wh3} gives
\begin{align}\label{wh5}
    \mathbb{E}\left[\sum_{n=0}^{\infty}e^{-qT_n+\mathrm{i} \theta S_n}\right]
    &=\mathbb{E}\left[\sum_{n=0}^{\infty}e^{-q \hat{l}_n+\mathrm{i} \theta \hat{H}_n}\right] A(q,\theta).
\end{align}
By the independence and stationarity of the increments
	$
	\bigl(T_{n+1}-T_n,\; S_{n+1}-S_n; n \in \mathbb{N}_0),
	$
\begin{align}\label{wh1}
\mathbb{E}\left[\sum_{n=0}^{\infty}e^{-qT_n+\mathrm{i} \theta S_n}\right]=\sum_{n=0}^{\infty}\mathbb{E}\left[e^{-qT_n+\mathrm{i} \theta S_n}\right] 
=\sum_{n=0}^{\infty}\kappa(q,\theta)^n =(1-\kappa(q,\theta))^{-1}.
\end{align}
By \eqref{stationary_increments} together with \eqref{kappa_varphi}, we have $\mathbb{E}[e^{-q \hat{l}_n+\mathrm{i} \theta \hat{H}_n}]=\mathbb{E}[e^{-q \hat{l}_1 +\mathrm{i} \theta \hat{H}_1}]^n=\hat{\varphi}(q,\theta)^n,
$ so
\begin{align*}
\mathbb{E}\left[\sum_{n=0}^{\infty}e^{-q \hat{l}_n+\mathrm{i} \theta \hat{H}_n}\right]=\sum_{n=0}^{\infty}\hat{\varphi}(q,\theta)^n= (1-\hat{\varphi}(q,\theta))^{-1}.
\end{align*}
By this, \eqref{wh5} and \eqref{wh1} and then applying Lemma \ref{WF},
\begin{align*}
A(q,\theta)=\frac{1-\hat{\varphi}(q,\theta)}{1-\kappa(q,\theta)} = (1-\varphi(q,\theta))^{-1}.
\end{align*}
The claim holds by applying this to \eqref{mu_1}.
\end{proof}
The following extends Lemma 1, XVIII.3 in \cite{Feller} and provides a way to compute the Fourier transform \eqref{mu_1a}. This will be used in Proposition \ref{conv_gral} when proving the convergence of the Gittins index to that of  the classical continuous case. See Appendix \ref{use_lemma} for its proof.
\begin{proposition}
\label{imp_lem} 
	For $0 < r\leq 1$ and $q,\theta \geq 0$,
	\begin{align*}
		- \log (1-\varphi(q,\theta; r))
        = \sum_{n=1}^{\infty}\frac{r^n}{n}\int_{[0,\infty)}\int_{(0,\infty)}e^{-qt+\mathrm{i} \theta x}\mathbb{P}\left(T_n\in {\rm d} t, Y(T_n)\in {\rm d} x\right).
	\end{align*}	
\end{proposition}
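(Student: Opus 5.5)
This is the analogue of Baxter's identity (Feller, Lemma 1, XVIII.3). I will prove it by the classical combinatorial/cyclic-permutation argument applied to the bivariate random walk $(T_n,S_n)$, whose increments $(W_{k},Y(T_{k+1})-Y(T_k))$ are i.i.d. because the $W_k$ are i.i.d. and independent of the L\'evy process $Y$. The key point is that the first strict ascending ladder epoch satisfies
\[
\mathbb{P}(N=n, T_n\in {\rm d} t, S_n\in {\rm d} x)\quad\text{and}\quad \tfrac1n\,\mathbb{P}(T_n\in{\rm d} t, S_n\in{\rm d} x)
\]
are related through the cyclic-shift lemma: for a fixed $n$ and a path with $S_n>0$, among the $n$ cyclic rotations of the increments, the number $\rho$ of rotations for which $n$ is a strict ladder epoch (i.e.\ $S_n>S_k$ for all $k<n$) equals the number of ladder epochs of the new walk up to $n$; all rotations have the same $(T_n,S_n)$ and the same law. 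This yields Feller's identity
\[
\sum_{k\ge1}\frac1k\,\mathbb{P}\bigl(N^{(k)}=n,\ T_n\in{\rm d} t,S_n\in{\rm d} x\bigr)=\frac1n\,\mathbb{P}(T_n\in{\rm d} t,S_n\in{\rm d} x),\qquad x>0,
\]
where $N^{(k)}=\tau_k$ is the $k$th strict ascending ladder epoch (as in the proof of Proposition~\ref{Gitt_gral}).

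Given this identity, I multiply by $r^n e^{-qt+\mathrm{i}\theta x}$, integrate over $t\ge0$, $x>0$, and sum over $n\ge1$. The right side is exactly the claimed series. On the left, summing over $n$ gives $\sum_k \frac1k \mathbb{E}[r^{\tau_k}e^{-q l_k+\mathrm{i}\theta H_k};\tau_k<\infty]$, and by the independent, stationary increments of the ladder process (see \eqref{stationary_increments}, now with the extra factor $r^{\tau}$, which is equally multiplicative by the strong Markov property) this equals $\sum_k \frac1k \varphi(q,\theta;r)^k=-\log(1-\varphi(q,\theta;r))$. Since $|\varphi(q,\theta;r)|\le \mathbb{E}[e^{-qT_1}]<1$ for $q>0$ (and $\le r<1$ otherwise when $q=0$, $r<1$), the logarithm series converges absolutely, and Fubini/Tonelli justifies all interchanges because the integrands are bounded by $r^n\mathbb{E}[e^{-qT_n}]/n$, which is summable. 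The edge case $q=0,r=1$ would need separate care (possibly via monotone limits $r\uparrow1$ or $q\downarrow0$), and I would handle it by continuity of both sides, using Abel's theorem on the right.

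The main obstacle is the combinatorial step: carefully verifying the cyclic-rotation lemma for the joint pair $(T_n,S_n)$. I would argue as in Feller: fix $n$, consider the event $\{S_n>0\}$ and the $n$ rotations $\sigma_\nu$ of the increment vector; exchangeability of the i.i.d. increments gives that each rotated walk has the same joint law, while $T_n$ and $S_n$ are rotation-invariant. If $n$ is the $k$th ladder epoch in exactly $k$ of the rotations whenever it is a ladder epoch in some rotation (Feller's lemma, which uses only the $S$-coordinates and strict inequality matching the definition of $N$), then averaging over rotations gives $\mathbb{P}(\tau_k=n, (T_n,S_n)\in B)=\frac kn\mathbb{P}(\rho=k,(T_n,S_n)\in B)$, and summing $\frac1k$ times this over $k$ gives the identity since $\rho\ge1$ on $\{S_n>0\}$. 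Ties in $S$ are no issue since the lemma uses strict ladder epochs consistently.
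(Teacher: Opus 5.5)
Your argument is correct, but it is not the paper's. The paper uses an analytic, Wiener--Hopf-type argument in the spirit of Feller XVIII.3:
\begin{itemize}
\item It sets $\nu_n(\cdot)=\mathbb{P}(N>n,(S_n,T_n)\in\cdot)$ and $\tilde\nu_n(\cdot)=\mathbb{P}(N=n,(S_N,T_N)\in\cdot)$, and uses the one-step recursion $\tilde\nu_{n+1}+\nu_{n+1}=\nu_n*\nu$ to get $\gamma(q,\theta;r)\,(1-r\kappa(q,\theta))=1-\varphi(q,\theta;r)$, where $\gamma$ is the generating transform of the $\nu_n$.
\item It expands $-\log(1-r\kappa)=\sum_n r^n\kappa^n/n$.
\item It notes that $-\log(1-\varphi)$ is the transform of a measure carried by $(0,\infty)\times[0,\infty)$, while $\log\gamma$ is the transform of a signed measure carried by $(-\infty,0]\times[0,\infty)$, and reads off the positive part by uniqueness of Fourier transforms.
\end{itemize}

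You instead run the combinatorial cyclic-rearrangement proof (Sparre Andersen's lemma, as in Feller's Chapter XII), with the time coordinate as a passive mark. This is legitimate because the pairs $(W_k,\,Y(T_{k+1})-Y(T_k))$ are i.i.d., hence invariant in law under rotation. The ladder structure depends only on the $S$-coordinates, and $(T_n,S_n)$ is rotation invariant. So your averaging identity $n\,\mathbb{P}(\tau_k=n,(T_n,S_n)\in B)=k\,\mathbb{P}(\rho=k,(T_n,S_n)\in B)$ holds.

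Your route buys three things. It gives a sharper statement: an identity of measures for each fixed $n$, not just of summed transforms. It avoids the support-separation and uniqueness step for signed measures, including splitting the alternating series. And it makes the Fubini justification automatic, since your identity at $\theta=0$ dominates the double sum by $\sum_n r^n\mathbb{E}[e^{-qT_n}]/n<\infty$. As a result, the case $r=1$, $q>0$ used in Proposition \ref{conv_gral} is covered directly rather than by a limiting argument.

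The paper's route needs only the convolution structure and sits naturally next to the factorization in Lemma \ref{WF}. It also yields for free the companion formula $\log\gamma(q,\theta;r)=\sum_n\frac{r^n}{n}\int_{[0,\infty)}\int_{(-\infty,0]}e^{-qt+\mathrm{i}\theta x}\,\mathbb{P}(T_n\in{\rm d}t,S_n\in{\rm d}x)$ for the pre-ladder transform.

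One small correction concerns the edge case $q=0$, $r=1$, which the paper also glosses over and which is never used. Abel's theorem gives continuity of the right-hand side at $r=1$ only once you already know the series at $r=1$ converges. To obtain convergence from the limit of $-\log(1-\varphi(0,\theta;r))$ as $r\uparrow 1$, you need a Tauberian theorem such as Littlewood's. It applies here because the $n$-th term has modulus at most $1/n$, provided $\varphi(0,\theta;1)\neq 1$.
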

\section{Exponential inter-arrival case} \label{sec_exponential}
In this section, we continue in the Markovian setting of Section \ref{subsection_Markov}, under the following additional assumption.
\begin{assum} \label{assump_exponential}
The random time $W_n^j$ is exponentially distributed with parameter $\lambda^j$ for each $n \in \mathbb{N}_0$ and $j \in \mathcal{J}$. 
\end{assum}
We omit the superscript $j$ throughout this section. 
\subsection{L\'evy process case.} \label{subsec_levy_case} 
We first consider the L\'evy case from Section \ref{section_levy}, specializing to the case that satisfies Assumption \ref{assump_exponential}.  We provide an application of Proposition \ref{Gitt_gral} for the spectrally one-sided case, thereby recovering the results given in \cite{PYM}. To indicate the dependence on the Poisson rate, we write $\mu$ in \eqref{mu_1} as $\mu^\lambda$.

\begin{exmp}[Spectrally negative case] \label{example_SN}
Suppose $Y$ is a spectrally negative L\'evy process with
	\begin{align}\label{characteristic_X}
\psi(\theta):=\log \mathbb{E}\left[e^{\theta Y(1)}\right] = -\Psi(- i \theta), \qquad \theta \geq 0,
	\end{align}
with its right inverse $\Phi(q):=\inf\{\lambda\geq0:\psi(\lambda)=q\}$.
By Theorem 3.1 in \cite{Albrecher}, we have
\begin{align}\label{lap_trans_gen_1}
    \varphi(q, \mathrm{i} \theta)=\mathbb{E}\left[e^{-qT_{N}-\theta S_N}\right] =\frac{\Phi(q+\lambda)-\Phi(q)}{\theta+\Phi(q+\lambda)}.
\end{align}
Substituting this in \eqref{mu_1a},
\begin{align*}
\int_{[0,\infty)}e^{-\theta y}\mu^{\lambda}({\rm d} y)=\frac{\Phi(q+\lambda)+\theta}{\theta+\Phi(q)}\frac{\Phi(q)}{\Phi(q+\lambda)},
\end{align*}
recovering (37) of \cite{PYM}. By inverting the Laplace transform by partial fraction decomposition, $\mu^\lambda$ becomes the one given in Section 5.2 of \cite{PYM}.
\end{exmp}

\begin{exmp}[Spectrally positive case]
Suppose $Y$ is spectrally positive. Let its dual process $\tilde{Y} = -Y$ have the Laplace exponent $\psi(\theta)$ as in \eqref{characteristic_X}. Using equation (14) of Theorem 3.1 in \cite{Albrecher} gives 
\begin{align}\label{prob_1_sp_gen_1}
    \varphi(q, \mathrm{i} \theta)= \mathbb{E}\left[e^{-qT_{N}-\theta S_N}\right]    =1-\frac{\psi(\theta)-q}{\theta-\Phi(q)}\frac{\Phi(q+\lambda)-\theta}{\lambda+q-\psi(\theta)}.
\end{align}
Substituting this in \eqref{mu_1a} gives
\begin{align*}
    &\int_{[0,\infty)}e^{-\theta y}\mu^\lambda({\rm d} y) =\frac{\theta-\Phi(q)}{\psi(\theta)-q}\frac{\lambda+q-\psi(\theta)}{\Phi(q+\lambda)-\theta}\frac{q}{\Phi(q)}\frac{\Phi(q+\lambda)}{\lambda+q},
\end{align*}
matching (43) of \cite{PYM}. Similar to the spectrally negative case, the above transform can be inverted by partial fraction decomposition to obtain $\mu^\lambda$, which matches the one given in Section 5.2 of \cite{PYM}.
\end{exmp}

We now show the convergence of the Gittin index obtained in Proposition \ref{Gitt_gral}, under Assumption \ref{assump_exponential}, to those obtained for the continuous-time setting in which the arms evolve as L\'evy processes; see \cite{KM95}. Let $(L(t))_{t\geq0}$ denote a local time at the maximum of the process $Y$ (see Section IV.2-4 of \cite{Be}), and define its inverse by
\begin{align*}
	L^{-1}(t):=
	\begin{cases}
		\inf\{s>0:L(s)>t\}, 
		& t<L(\infty),\\
		\infty, & \text{otherwise}.
	\end{cases}
\end{align*}
Set $H(t):=Y(L^{-1}(t))$ for $t<L(\infty)$, and $H(t):=\infty$ otherwise. Then, as shown in Section 6.2 of \cite{K}, the pair $(L^{-1}(t), H(t))_{t \geq 0}$ is a two-dimensional subordinator, possibly killed, with Laplace exponent
\begin{align*} 
	\mathbb{E} [e^{- q L^{-1}(1) -\theta H(1)} 1_{\{ 1 < L(\infty)\}}] = e^{- \kappa(q, \theta)},\quad q,\theta\geq 0,
\end{align*}
where, for $q,\theta \geq 0$,
\begin{align} \label{laplace_exp_ladder_gral}
    \log(\kappa(q, \theta)) \coloneqq \log(k)
     +\int_0^{\infty}\int_{(0,\infty)}(e^{-t}-e^{-qt-\theta x})\frac{1}{t}\mathbb{P}(Y(t)\in {\rm d} x)\, {\rm d} t,
\end{align}
for a positive constant $k$.

As discussed in Section 4.3 of \cite{KM95}, the Gittins index for the continuous-time bandits can be expressed as $\int_{[0,\infty)} R (x+y) \mu^\infty ({\rm d} y)$ where the measure $\mu^\infty$ is characterized via its Fourier transform as 
\begin{align}\label{mu_laplace_classical_gral}
	\int_{[0,\infty)} e^{\textrm{i} \theta y} \mu^\infty ({\rm d} y) = \frac {\kappa(q,0)} {\kappa(q,- \textrm{i} \theta)}, \quad \theta \geq 0.
\end{align}
We obtain the following result. 
\begin{proposition} \label{conv_gral}
	The measure $\mu^\lambda$ converges weakly to $\mu^\infty$ as $\lambda \to \infty$. 
	This implies the convergence of the Gittins index, if $R(\cdot)$ is bounded and continuous. 
\end{proposition}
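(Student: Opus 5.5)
The plan is to compute the Laplace transform of $\mu^\lambda$ explicitly with Proposition \ref{imp_lem}, let $\lambda\to\infty$ in the resulting exponent, and identify the limit with the transform of $\mu^\infty$ in \eqref{mu_laplace_classical_gral}. Since all measures involved are probability measures on $[0,\infty)$, I will work with Laplace transforms $\theta\mapsto\int e^{-\theta y}\mu^\lambda({\rm d} y)$, $\theta\ge 0$, rather than Fourier transforms. Convergence of these transforms for every $\theta\ge0$ then gives weak convergence.

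\textbf{Step 1: a formula for the transform of $\mu^\lambda$.} By Proposition \ref{Gitt_gral}, the transform of $\mu^\lambda$ equals $(1-\varphi(q,0))/(1-\varphi(q,\theta))$, evaluated at imaginary argument. Proposition \ref{imp_lem} with $r=1$ then gives
\[
\log\int_{[0,\infty)} e^{-\theta y}\mu^\lambda({\rm d} y)=\sum_{n\ge1}\frac1n\int_{[0,\infty)}\int_{(0,\infty)}e^{-qt}\big(e^{-\theta x}-1\big)\,\mathbb{P}(T_n\in{\rm d} t,\,Y(T_n)\in{\rm d} x).
\]
This uses Proposition \ref{imp_lem} with $e^{-\theta x}$ in place of $e^{\mathrm{i}\theta x}$. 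The same proof, or analytic continuation in $\theta$ on the region where both sides are finite (they are, since $x>0$), should give it. Under Assumption \ref{assump_exponential}, $T_n$ is $\mathrm{Gamma}(n,\lambda)$ and independent of $Y$. Since every term is nonpositive, Tonelli lets me sum inside. Using
\[
\sum_{n\ge1}\frac1n\frac{\lambda^n t^{n-1}e^{-\lambda t}}{(n-1)!}=\frac{1-e^{-\lambda t}}{t},
\]
I obtain
\[
\log\int e^{-\theta y}\mu^\lambda({\rm d} y)=\int_0^\infty\int_{(0,\infty)}\big(e^{-\theta x}-1\big)\,e^{-qt}\,\frac{1-e^{-\lambda t}}{t}\,\mathbb{P}(Y(t)\in{\rm d} x)\,{\rm d} t .
\]

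\textbf{Step 2: pass to the limit.} The integrand is nonpositive and its absolute value increases in $\lambda$ to $(1-e^{-\theta x})e^{-qt}t^{-1}$. Monotone convergence therefore gives
\[
\lim_{\lambda\to\infty}\log\int e^{-\theta y}\mu^\lambda({\rm d} y)=\int_0^\infty\int_{(0,\infty)}\big(e^{-\theta x}-1\big)\,\frac{e^{-qt}}{t}\,\mathbb{P}(Y(t)\in{\rm d} x)\,{\rm d} t .
\]
Subtracting \eqref{laplace_exp_ladder_gral} at $(q,\theta)$ from \eqref{laplace_exp_ladder_gral} at $(q,0)$ cancels the $e^{-t}$ terms and the constant $k$. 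The right-hand side above is then exactly $\log\kappa(q,0)-\log\kappa(q,\theta)$. This is finite because the ladder exponent is finite for $q>0$, and it is the logarithm of the transform \eqref{mu_laplace_classical_gral} of $\mu^\infty$, written in Laplace form.

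\textbf{Step 3: conclude.} The limit function $\kappa(q,0)/\kappa(q,\theta)$ tends to $1$ as $\theta\downarrow0$, by monotone convergence in \eqref{laplace_exp_ladder_gral}. The continuity theorem for Laplace transforms of probability measures on $[0,\infty)$ then yields $\mu^\lambda\Rightarrow\mu^\infty$, with no mass lost. For bounded continuous $R$, the map $y\mapsto R(x+y)$ is bounded and continuous for each fixed $x$, so $\Gamma^\lambda(x)=\int R(x+y)\mu^\lambda({\rm d} y)\to\int R(x+y)\mu^\infty({\rm d} y)$, which is the Gittins index of \cite{KM95}.

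\textbf{Main obstacle.} I expect the delicate point to be the justification in Step 1 that Proposition \ref{imp_lem}, stated with $e^{\mathrm{i}\theta x}$, holds with $e^{-\theta x}$, and the matching of the normalisation of $\kappa$ in \eqref{laplace_exp_ladder_gral} with \eqref{mu_laplace_classical_gral}. The choice of Laplace rather than Fourier transforms is deliberate: it makes the integrand sign-definite, so Step 2 needs only monotone convergence and no domination near $t=0$.
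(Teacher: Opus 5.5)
Your proposal is correct and follows the paper's proof: Proposition~\ref{imp_lem} with $r=1$, the Gamma sum giving the kernel $(1-e^{-\lambda t})/t$, the limit $\lambda\to\infty$, identification with the ladder exponent, and the continuity theorem. Your use of Laplace transforms with monotone convergence, in place of the paper's Fourier transforms with dominated convergence, is only a cosmetic variant. The imaginary-argument versions of Propositions~\ref{Gitt_gral} and~\ref{imp_lem} that it requires follow from uniqueness of the underlying finite measures on $(0,\infty)$, or from analytic continuation.
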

\begin{proof}
Using Proposition \ref{imp_lem} for $r=1$ and because $T_n\sim\text{Gamma}(n,\lambda)$ with $T_n$ independent of the process $Y$, Fubini's Theorem gives
\begin{align*}
    - \log (1-\varphi(q,\theta)) &=\sum_{n=1}^{\infty}\int_0^{\infty}\int_{(0,\infty)}e^{-qs}{e^{\mathrm{i}\theta x}}\mathbb{P}\left(Y(s)\in {\rm d} x\right)\frac{\lambda(\lambda s)^{n-1}}{n\Gamma(n)}e^{-\lambda s}{\rm d} s\notag\\\
    &=\sum_{n=1}^{\infty}\int_{0}^\infty\int_{(0,\infty)}e^{-qs}{e^{\mathrm{i}\theta x}}\mathbb{P}\left(Y(s)\in {\rm d} x\right)\frac{\lambda(\lambda s)^{n-1}}{n!}e^{-\lambda s}{\rm d} s \notag\\
	&=\int_0^{\infty}\int_{(0,\infty)}e^{-qs}{e^{\mathrm{i}\theta x}}\mathbb{P}\left(Y(s)\in {\rm d} x\right)\frac{(e^{\lambda s}-1)}{s}e^{-\lambda s}{\rm d} s.
\end{align*}
Substituting this in \eqref{mu_1a}  and taking $\lambda\to\infty$ with the dominated convergence theorem,
\begin{align*} 
\begin{split}
	\int_{[0,\infty)}e^{\mathrm{i}\theta y}\mu^{\lambda}({\rm d} y)
    &=\exp\left\{\int_0^{\infty}\int_{(0,\infty)}e^{-(q + \lambda)s}({e^{\mathrm{i} \theta x}}-1)  \frac{(e^{\lambda s}-1)}{s} \mathbb{P}\left(Y(s) \in {\rm d} x\right) {\rm d} s\right\} \\
    &\xrightarrow{\lambda \uparrow \infty}\exp\left\{\int_0^{\infty}\int_{(0,\infty)}\frac{e^{-qs}}{s}({e^{\mathrm{i} \theta x}}-1)\mathbb{P}\left(Y(s)\in {\rm d} x\right) {\rm d} s\right\},
\end{split}
\end{align*}
which equals \eqref{mu_laplace_classical_gral}.  The proof is complete by the continuity theorem.
\end{proof}

\subsection{General case.}  

We now consider a general Markov process $Y$ with c\'adl\'ag paths satisfying Assumption \ref{assump_comparison}. We first obtain a useful expression for the Gittins index in Theorem \ref{thm_Gittins_exponential}. We then apply it to obtain semi-explicit expressions for reflected spectrally negative L\'evy and diffusion processes.
With $\mathbb{T} := (T_0, T_1, \ldots)$, let 
\begin{align}
T_x^- := T_{\tau^-_x}  = \inf \{ T \in \mathbb{T}, T > 0: Y(T) \leq x \}. \label{def_T_x}
\end{align}
By Assumption \ref{assump_exponential}, the counting process $M= (M(t) = \sum_{i} 1_{\{T_i \leq t\}}; t \geq 0)$  with inter-arrival times $(W_n; n \in \mathbb{N}_0)$ becomes a Poisson process independent of $Y$.
The compensation formula for $M$ gives 
\begin{align*}
  \mathbb{E}_x \left[ \sum_{s=0}^{\tau_x^- - 1} e^{-q T_s} R(Y(T_s))\right] 
    &= R(x)+\mathbb{E}_x \left[ \int_{(0,T_x^-]} e^{-q s} R(Y(s)) {\rm d} M(s)\right] - \mathbb{E}_x \left[ e^{-q T_x^-} R(Y(T_x^-))\right] \\
    &= R(x)+\mathbb{E}_x \left[\int_{(0,T_x^-]} e^{-q s} R(Y(s)) 1_{\{ Y(s) > x \}} {\rm d} M(s)\right]  \\
    &=R(x)+\lambda\mathbb{E}_x \left[ \int_0^{T_x^-} e^{-q s} R(Y(s)) 1_{\{ Y(s) > x \}} {\rm d} s\right].
\end{align*}
To justify the second equality, note that $Y(T_s) > x$ for all $1 \leq s < \tau_x^-$ and $Y(T_x^-) \leq x$ by the definition of $T_x^-$. In view of the above computation, the following is a direct corollary of Proposition \ref{prop_opt_stopping_time}.
\begin{thm} \label{thm_Gittins_exponential}
Under Assumptions \ref{assump_comparison} and \ref{assump_exponential}, the Gittins index evaluated at $x \in \mathbb{R}$ is given by
\begin{align} 
	\Gamma(x)= \frac{\displaystyle R(x)+\lambda\mathbb{E}_x\left[\int_0^{T_x^-}e^{-qs} R(Y(s))1_{\{Y(s)>x\}}{\rm d} s \right]}{\displaystyle 1+\lambda\mathbb{E}_x\left[\int_0^{T_x^-}e^{-qs}1_{\{Y(s)>x\}}{\rm d} s \right]}. \label{gittins}
\end{align}
\end{thm}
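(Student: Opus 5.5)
The plan is to start from the ratio representation \eqref{Gamma_tilde} in Proposition \ref{prop_opt_stopping_time} and rewrite its numerator and denominator separately. The denominator is the special case $R\equiv 1$ of the numerator. The conditions on $R$ that enter the proof of Proposition \ref{prop_opt_stopping_time} (monotonicity) are only needed for the identification of the optimal stopping time, not for the rewriting. So it suffices to prove, for a generic measurable $f$ with the required integrability,
\begin{align*}
\mathbb{E}_x \Big[ \sum_{s=0}^{\tau_x^- - 1} e^{-q T_s} f(Y(T_s))\Big] = f(x)+\lambda\mathbb{E}_x \Big[ \int_0^{T_x^-} e^{-q s} f(Y(s)) 1_{\{ Y(s) > x \}} {\rm d} s\Big],
\end{align*}
and then to apply this identity with $f=R$ and with $f\equiv 1$.

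\textbf{Step 1: pass to a Stieltjes integral.} Under Assumption \ref{assump_exponential}, $M$ is a Poisson process of rate $\lambda$ independent of $Y$, and $T_x^-$ is the first jump time $T$ of $M$ with $T>0$ and $Y(T)\le x$. Separating the $s=0$ term and replacing the remaining sum over $1\le s\le \tau_x^- -1$ by the integral over $(0,T_x^-]$ against ${\rm d}M$ minus the terminal term gives the first equality of the displayed computation preceding the theorem.

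\textbf{Step 2: insert the indicator.} On $\{T_x^-<\infty\}$, every jump time $T_s$ with $1\le s<\tau_x^-$ satisfies $Y(T_s)>x$, while at $T_x^-$ itself $Y\le x$. Hence the integral over $(0,T_x^-]$ against $f(Y(s))1_{\{Y(s)>x\}}{\rm d}M(s)$ equals the original sum exactly, and the terminal correction is absorbed. On $\{T_x^-=\infty\}$ the same identity holds with the open interval, since no jump has $Y\le x$. This is the second equality.

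\textbf{Step 3: compensation formula (the main obstacle).} This step replaces ${\rm d}M(s)$ by $\lambda\,{\rm d}s$. The integrand $e^{-qs}f(Y(s-))1_{\{Y(s-)>x\}}1_{\{s\le T_x^-\}}$ must be predictable with respect to the filtration jointly generated by $Y$ and $M$. Because $Y$ is independent of $M$, it almost surely has no jumps at the jump times of $M$, so $Y(s)$ may be replaced by $Y(s-)$ in the ${\rm d}M$ integral. Likewise $1_{\{s\le T_x^-\}}$ is left-continuous and adapted, since $T_x^-$ is a stopping time for this enlarged filtration. For $f\ge 0$, the compensation formula (the compensator of $M$ is $\lambda s$) gives $\lambda\mathbb{E}_x\int_0^{T_x^-}e^{-qs}f(Y(s-))1_{\{Y(s-)>x\}}{\rm d}s$. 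Since $Y$ is c\`adl\`ag, $Y(s-)=Y(s)$ for Lebesgue-a.e.\ $s$, which yields the third equality. General $f$ is handled by splitting $f=f^+-f^-$. Finiteness of both parts follows from the integrability assumption, and is automatic for the denominator because the left side is bounded by $\mathbb{E}\sum e^{-qT_s}=1+\lambda/q$.

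Finally, divide the two identities and invoke Proposition \ref{prop_opt_stopping_time}. Alternatively, use Remark \ref{remark_tau_tilde}, which shows that the choice of $\le$ versus $<$ in the definition of $T_x^-$ does not affect the value. This gives \eqref{gittins}.
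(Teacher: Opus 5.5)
Your proposal is correct and follows the paper's argument: split off the $s=0$ term, write the remaining sum as a ${\rm d}M$-integral over $(0,T_x^-]$ minus the terminal term, absorb that term with the indicator $1_{\{Y(s)>x\}}$ using the definition of $T_x^-$, apply the compensation formula for the Poisson process $M$, and take the ratio as in Proposition \ref{prop_opt_stopping_time}. Your extra justification of predictability (replacing $Y(s)$ by $Y(s-)$) and of integrability fills in details the paper leaves implicit.
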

From \eqref{gittins}, it can be seen that as $\lambda \downarrow 0$, the Gittins index converges to the reward $R(x)$ for $x \in \mathbb{R}$. The asymptotic behavior of the Gittins index as $\lambda \uparrow \infty$ for L\'evy-driven arms is examined in Propositions \ref{conv_gral} and \ref{Prop: reflected convergence}.

\subsection{Reflected L\'evy case.} 
Another class of non-continuous processes for which Assumption \ref{assump_comparison} holds is that of reflected L\'evy processes. For a spectrally one-sided reflected process, the Gittins index \eqref{gittins} can be expressed in terms of the scale function. We demonstrate this by considering arms driven by reflected spectrally negative L\'evy processes with a lower boundary. The spectrally positive case, as well as cases involving an upper boundary, can be treated using the same technique. Other variants, such as refracted processes 
\cite{hernandez_et_al,Kyprianou_Loeffen} can be treated similarly.

For a L\'evy process $Y = (Y(t); t \geq 0)$, let $Y^{(b)} = (Y^{(b)}(t); t \geq 0)$ be $Y$ reflected from below at $b \in \mathbb{R}$:
\begin{equation*}
    Y^{(b)}(t) \coloneqq Y(t) - \inf_{0 \leq s \leq t} \left(0 \wedge (Y(s) - b)\right).
\end{equation*}
We consider arms driven by $Y^{(b)}$ and obtain the corresponding Gittins index  \eqref{gittins}.

For $x \geq b$, by Theorem \ref{thm_Gittins_exponential} and because $Y^{(b)}$ under $\mathbb{P}_x$ has the same law as $Y^{(b-x)}+x$ under $\mathbb{P}$,
\begin{equation} \label{Gittins_reflected}
    \Gamma(x) = \frac{R(x) + \lambda \int_{(0,\infty)} R(x + y) u^{(q)}(0, {\rm d} y; b - x)}{1 + \lambda \int_{(0,\infty)} u^{(q)}(0, {\rm d} y; b - x)},
\end{equation}
where, for $a \leq 0$ and $z, y \in [a, \infty)$, we define
\begin{equation}
    u^{(q)}(z, {\rm d} y; a) \coloneqq \int^\infty_0 e^{-qt} \mathbb{P}_z(Y^{(a)}(t) \in {\rm d} y, t < T^-_0)\, {\rm d} t,
\end{equation}
with $T^-_0$ as in \eqref{def_T_x} for $Y^{(b)}$. We also have that $\Gamma(x) = \Gamma(b)$ for all $x < b$.

To compute $\Gamma(x)$ in \eqref{Gittins_reflected}, we require the scale functions. For $q \geq 0$, the $q$-scale function $W^{(q)}: \mathbb{R} \to [0, \infty)$ of a spectrally negative L\'evy process $Y$ is continuous and strictly increasing on the positive half-line, and satisfies
\begin{equation}
    \int_0^\infty e^{-sx} W^{(q)}(x)\, {\rm d} x = \frac{1}{\psi(s) - q}, \quad s > \Phi(q),
\end{equation}
with $\psi$ and $\Phi$ as defined in Example \ref{example_SN}. On the negative half-line, $W^{(q)}$ is set to zero.  Let $\overline{W}^{(q)}(x) := \int_0^x W^{(q)}(y) {\rm d} y$ for $x \in \mathbb{R}$.

For $x \in \mathbb{R}$, define $Z^{(q)}(x) \coloneqq 1 + q\overline{W}^{(q)}(x)$ and, for $\lambda > 0$,
\begin{align}\label{Z_sf}
    Z^{(q + \lambda)}(x, \Phi(q))
    \coloneqq e^{\Phi(q) x} \left(1 + \lambda\int_0^{x} e^{-\Phi(q) z} W^{(q + \lambda)}(z)\, {\rm d} z \right).
\end{align}
\begin{proposition} \label{thm_gittins_reflected} 
For a spectrally negative L\'evy process reflected from below at $a\leq 0$, and for $y>0$,
\begin{equation*}
    u^{(q)}(0, {\rm d} y; a) = \frac{\Phi(q) e^{-\Phi(q)y}}{(q + \lambda)\frac{Z^{(q + \lambda)}(-a, \Phi(q))}{Z^{(q + \lambda)}(-a)} -\lambda}.
\end{equation*}
The Gittins index of a reflected spectrally negative L\'evy process with barrier $b \in \mathbb{R}$ is given by
\begin{align}\label{Eq: gittins reflected}
    \Gamma(x)
    = \begin{cases}
        \left(1 - c^{(b)}_x\right) R(x)+ c^{(b)}_x \int_0^\infty \Phi(q) e^{-\Phi(q)y} R(x + y)\, {\rm d} y, & x > b, \\
        \frac{q}{\lambda + q}R(b) + \frac{\lambda}{\lambda + q} \int^\infty_0 \Phi(q) e^{-\Phi(q)y} R(b + y)\, {\rm d} y, & x \leq b,
    \end{cases} 
\end{align}
where $c^{(b)}_x := (\lambda/(q + \lambda))Z^{(q + \lambda)}(x-b)/Z^{(q + \lambda)}({x-b}, \Phi(q))$. 
\end{proposition}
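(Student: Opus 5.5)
We first compute the potential measure $u^{(q)}(0,{\rm d}y;a)$ and then substitute it into \eqref{Gittins_reflected}.

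\textbf{Step 1: regenerative decomposition of the potential measure.} Under $\mathbb{P}$ the process starts at $0$. The killing time $T^-_0$ is the first Poisson epoch $T\in\mathbb{T}$ with $Y^{(a)}(T)\le 0$. Because the inter-arrival times are exponential, the Poisson epochs are independent of $Y^{(a)}$. We therefore write $u^{(q)}(0,{\rm d}y;a)$ as the resolvent of $Y^{(a)}$ killed at rate $q$ and additionally killed at the first Poisson arrival during which the process lies in $(-\infty,0]$, i.e.\ in $[a,0]$. This is the reflected analogue of the Parisian/Poissonian-observation ruin problem of \cite{Albrecher,Lkabous,landriault_potential_2018}.

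\textbf{Step 2: split the path at the first upcrossing of $0$.} Since $Y$ has no positive jumps, it creeps upward. So $Y^{(a)}$, started at $0$, reaches $(0,\infty)$ and returns to level $0$ continuously, with no overshoot. The path then splits into:
\begin{itemize}
\item excursions above $0$, which contribute occupation in $(0,\infty)$ and are governed by the unreflected process started at $0$ until it first goes below $0$;
\item sojourns in $[a,0]$, during which a Poisson arrival kills the process at rate $\lambda$ and the reflection at $a$ is active.
\end{itemize}
The time spent in $(0,\infty)$ before reaching $\mathbb{R}_-$ again is captured by the classical resolvent identity
\[
\int_0^\infty e^{-qt}\,\mathbb{P}(Y(t)\in{\rm d}y,\ t<\tau_0^-)\,{\rm d}t=\big(e^{-\Phi(q)y}W^{(q)}(0)-W^{(q)}(-y)\big){\rm d}y .
\]
For $y>0$ this gives the factor $e^{-\Phi(q)y}$; when $W^{(q)}(0)=0$ we use the usual limiting/perturbation argument in which excursions from $0$ are started at $\varepsilon$. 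This is why the density is $\propto\Phi(q)e^{-\Phi(q)y}$, consistent with Example \ref{example_SN}.

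\textbf{Step 3: the sojourn below $0$.} Starting from $z\le 0$, we need the joint law of returning to $0$ before both the Poisson kill and reflection effects, with discount $q$. Killing by an independent exponential clock at rate $\lambda$ while in $[a,0]$ amounts to discounting at rate $q+\lambda$ there. The reflected upcrossing identity then gives
\[
\mathbb{E}_z\big[e^{-(q+\lambda)\tau_0^+}\big]=Z^{(q+\lambda)}(z-a)/Z^{(q+\lambda)}(-a).
\]
The fact that the rate is $q$ above $0$ and $q+\lambda$ below $0$ is exactly what the mixed function $Z^{(q+\lambda)}(\cdot,\Phi(q))$ in \eqref{Z_sf} encodes; it is the kind of identity obtained in \cite{Albrecher} via the $\omega$-scale function approach. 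Combining the excursions above $0$ with the sojourns below $0$ as a geometric series produces the normalizing constant
\[
\Big[(q+\lambda)\frac{Z^{(q+\lambda)}(-a,\Phi(q))}{Z^{(q+\lambda)}(-a)}-\lambda\Big]^{-1}.
\]
The cleanest way to carry this out is to first solve the problem for $Y^{(a)}$ started at $\varepsilon>0$, or with an auxiliary upper barrier, and then pass to the limit.

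\textbf{Step 4: substitute into \eqref{Gittins_reflected}.} With $a=b-x$, the total mass is $\int_{(0,\infty)}u^{(q)}(0,{\rm d}y;a)=1/D$, where $D$ is the denominator above. Writing $\rho=Z^{(q+\lambda)}(-a,\Phi(q))/Z^{(q+\lambda)}(-a)$, we compute
\[
\frac{\lambda/D}{1+\lambda/D}=\frac{\lambda}{(q+\lambda)\rho}=c^{(b)}_x .
\]
This gives the case $x>b$. For $x\le b$ we use $\Gamma(x)=\Gamma(b)$, and at $a=0$ we have $\rho=1$, which gives $\lambda/(q+\lambda)$.

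The main obstacle is Step 3. Gluing the sojourns with discount $q+\lambda$ to the excursions with discount $q$, in the presence of reflection at $a$, and identifying the result exactly as $Z^{(q+\lambda)}(\cdot,\Phi(q))$, requires a careful limiting argument, since the process starts exactly at the boundary $0$ and may have unbounded variation.
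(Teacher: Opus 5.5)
\textbf{Verdict: there is a genuine gap, in Step 3.} Your Step 4 is right: total mass $1/D$ with $D=(q+\lambda)\rho-\lambda$ gives $\lambda/(D+\lambda)=\lambda/((q+\lambda)\rho)=c^{(b)}_x$, and $\rho=1$ at $a=0$. The creeping argument of Step 2 also correctly explains why the $y$-profile of $u^{(q)}(0,{\rm d}y;a)$ is $e^{-\Phi(q)y}$.

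Step 3, however, carries the entire content of the first claim, and you only assert that the geometric series ``produces'' $D^{-1}$ without computing its ratio. That ratio is the discounted, surviving return factor of one cycle,
\[
B(x,a)=\frac{\mathbb{E}_x\bigl[e^{-q\kappa_0^-}Z^{(q+\lambda)}(Y(\kappa_0^-)-a)\bigr]}{Z^{(q+\lambda)}(-a)},
\]
which depends on the joint law of $\kappa_0^-$ and the undershoot $Y(\kappa_0^-)$. Only upward passages are guaranteed to be continuous. The process can leave $(0,\infty)$ by a downward jump, and the resulting undershoot, truncated at $a$ by the reflection, is the random starting point $z$ of your Step 3.

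Evaluating $B$ requires a specific identity. One route is Lemma 2.1 of \cite{Loeffen} for $\mathbb{E}_x[e^{-q\kappa_0^-}Z^{(q+\lambda)}(Y(\kappa_0^-)-a);\kappa_0^-<\kappa_c^+]$, then $c\uparrow\infty$ using the asymptotics of $W^{(q)}$, then an integration by parts. That is exactly where $Z^{(q+\lambda)}(-a,\Phi(q))$ comes from. Saying it is ``the kind of identity obtained in \cite{Albrecher}'' does not supply it.

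Even with $B$ in hand, your single regenerative decomposition at $0$ closes directly only in the bounded-variation case. There $B(0,a)=1-W^{(q)}(0)D/\Phi(q)$, and your series gives $W^{(q)}(0)e^{-\Phi(q)y}/(1-B(0,a))=\Phi(q)e^{-\Phi(q)y}/D$, as claimed.

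For unbounded variation, $\kappa_0^-=0$ under $\mathbb{P}$ and $W^{(q)}(0)=0$, so the series is $0/0$. The ``start at $\varepsilon$'' argument you mention then needs a further identity you do not state, or an approximation by bounded-variation processes. A suitable identity would be the upward-passage transform of $Y^{(a)}$ under killing rate $q+\lambda 1_{\{Y^{(a)}\le 0\}}$.

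The paper avoids this with two renewal equations:
\begin{itemize}
\item one for $u^{(q)}(x,{\rm d}y;a)$, $x>0$, obtained by splitting at $\kappa_0^-$, which is where $B(x,a)$ enters;
\item one for $u^{(q)}(0,{\rm d}y;a)$, obtained by splitting at the first Poisson epoch $\mathbf{e}_\lambda>0$ and using the $(q+\lambda)$-resolvent of the reflected process.
\end{itemize}
The resulting linear equation is solved via the Laplace transform in Lemma \ref{lemma_laplace}, with no case distinction on path variation.
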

Define the measure $\mu^{\lambda}_{b}$  for $Y^{(b)}$ similar to \eqref{mu_1}. The following proposition is shown in Appendix \ref{Appx: reflected convergence}.
\begin{proposition}\label{Prop: reflected convergence}
The measure $\mu_b^\lambda$ converges weakly to $\mu^\infty$ as in \eqref{mu_laplace_classical_gral} as $\lambda \to \infty$. 
	This yields the convergence of the Gittins index in the continuous case   on $[b, \infty)$, if $R(\cdot)$ is bounded and continuous. 
\end{proposition}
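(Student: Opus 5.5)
Fix $x \ge b$ and set $a := b - x \le 0$. By \eqref{Gittins_reflected} and Proposition \ref{thm_gittins_reflected}, the measure $\mu_b^\lambda$ (the analogue of \eqref{mu_1} for $Y^{(b)}$ started at $x$, shifted by $x$) is explicit:
\begin{align*}
\mu_b^\lambda({\rm d} y) = \frac{\delta_0({\rm d} y) + \lambda\, u^{(q)}(0,{\rm d} y; a)}{1 + \lambda \int_{(0,\infty)} u^{(q)}(0,{\rm d} y;a)}
= (1-c_x^{(b)})\,\delta_0({\rm d} y) + c_x^{(b)}\, \Phi(q) e^{-\Phi(q) y}\,{\rm d} y .
\end{align*}
For a spectrally negative L\'evy process, the continuous-time measure $\mu^\infty$ from \eqref{mu_laplace_classical_gral} is the exponential law with rate $\Phi(q)$. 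Indeed, the ascending ladder height process is pure drift, $H(t)=t$ and $L^{-1}$ is the first-passage process, so $\kappa(q,\theta)=\Phi(q)+\theta$ up to a constant. Alternatively, this follows from Example \ref{example_SN}, since $\int e^{-\theta y}\mu^\lambda({\rm d} y) \to \Phi(q)/(\Phi(q)+\theta)$ as $\lambda\to\infty$ because $\Phi(q+\lambda)\to\infty$. Weak convergence of $\mu_b^\lambda$ to $\mu^\infty$ is therefore equivalent to showing $c_x^{(b)} \to 1$ as $\lambda\to\infty$ for each fixed $x\ge b$. The conclusion for the Gittins index then follows from $\Gamma(x)=\int R(x+y)\mu_b^\lambda({\rm d} y)$, using boundedness and continuity of $R$.

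The main step is the limit
\[
\frac{\lambda}{q+\lambda}\, \frac{Z^{(q+\lambda)}(z)}{Z^{(q+\lambda)}(z,\Phi(q))} \to 1, \qquad z := x-b \ge 0 .
\]
At $z=0$ both $Z$'s equal $1$, so the ratio is $\lambda/(q+\lambda) \to 1$ immediately. For $z>0$, write $p=q+\lambda$ and use the standard asymptotics $W^{(p)}(u) \sim e^{\Phi(p)u}/\psi'(\Phi(p))$, together with $\Phi(p)\to\infty$. This estimate is not uniform in $p$, so instead of relying on it I would compare the two quantities directly. We have $Z^{(p)}(z) = 1 + p\int_0^z W^{(p)}(u)\,{\rm d} u$ and $Z^{(p)}(z,\Phi(q)) = e^{\Phi(q)z} + \lambda \int_0^z e^{\Phi(q)(z-u)} W^{(p)}(u)\,{\rm d} u$. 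The plan is to show that the mass of $W^{(p)}$ on $[0,z]$ concentrates near $u=z$ as $p\to\infty$, i.e.
\[
\frac{\int_0^{z-\varepsilon} W^{(p)}(u)\,{\rm d} u}{\int_0^z W^{(p)}(u)\,{\rm d} u}\to 0 \quad \text{for each } \varepsilon>0 .
\]
Concentration would give $\int_0^z e^{\Phi(q)(z-u)}W^{(p)}(u)\,{\rm d} u \big/ \overline{W}^{(p)}(z) \to 1$. Combined with $p\,\overline{W}^{(p)}(z)\to\infty$, which makes the constants $1$ and $e^{\Phi(q)z}$ negligible, and with $\lambda/p\to1$, this yields the claim.

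To prove concentration, I would use the representation $W^{(p)}(u) = e^{\Phi(p)u} W_{\Phi(p)}(u)$. Here $W_{\Phi(p)}$ is the scale function under the Esscher-transformed measure and is nondecreasing with $W_{\Phi(p)}(u) \le 1/\psi_{\Phi(p)}'(0+) = 1/\psi'(\Phi(p))$. This gives
\[
\int_0^{z-\varepsilon} W^{(p)} \le \frac{e^{\Phi(p)(z-\varepsilon)}}{\Phi(p)\,\psi'(\Phi(p))}
\quad\text{and}\quad
\int_{z-\varepsilon/2}^{z} W^{(p)} \ge \frac{\varepsilon}{2}\, e^{\Phi(p)(z-\varepsilon/2)}\, W_{\Phi(p)}(z-\varepsilon/2).
\]
The ratio of these is at most $e^{-\Phi(p)\varepsilon/2}$ times $\big(\Phi(p)\psi'(\Phi(p))\, W_{\Phi(p)}(z-\varepsilon/2)\big)^{-1}$. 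I expect the main obstacle to be controlling $W_{\Phi(p)}(z-\varepsilon/2)$ from below uniformly in $p$, which needs $\psi'(\Phi(p))W_{\Phi(p)}(u) = \mathbb{P}^{\Phi(p)}(\inf Y \ge -u)$ to stay bounded away from zero. This should hold because under the Esscher measure the drift $\psi'(\Phi(p))\to\infty$. Failing a clean uniform bound, I would fall back on the exponential factor $e^{-\Phi(p)\varepsilon/2}$ dominating any polynomial loss.
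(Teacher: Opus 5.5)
Your reduction is correct and lands on the same key limit as the paper. You write $\mu_b^\lambda=(1-c_x^{(b)})\delta_0+c_x^{(b)}\Phi(q)e^{-\Phi(q)y}{\rm d}y$, identify $\mu^\infty$ as the exponential law with rate $\Phi(q)$, and reduce everything to $Z^{(q+\lambda)}(z,\Phi(q))/Z^{(q+\lambda)}(z)\to 1$ for $z>0$, with the case $z=0$ handled separately. The difference is in how that limit is proved. The paper substitutes $W^{(p)}(y)=e^{\Phi(p)y}/\psi'(\Phi(p))-\hat u^{(p)}(y)$, where $\hat u^{(p)}$ is bounded uniformly in $p$ and $y$ (Theorem 2.7(iv) of \cite{KKR}). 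It then integrates the exponential term explicitly in numerator and denominator, and the $\hat u$ terms become negligible. You instead use the Esscher factorisation $W^{(p)}(u)=e^{\Phi(p)u}W_{\Phi(p)}(u)$ and a Laplace-method concentration argument. That is a legitimate alternative.

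As written, however, your argument is not finished. The uniform lower bound on $\psi'(\Phi(p))W_{\Phi(p)}(z-\varepsilon/2)$ is only asserted, on the heuristic that the Esscher drift blows up. But the L\'evy measure also changes under the Esscher transform, so this needs an actual argument, and the ``polynomial loss'' fallback is likewise unproved.

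The obstacle is self-inflicted, though. In the numerator you used the crude bound $W_{\Phi(p)}\le 1/\psi'(\Phi(p))$. Use instead the monotonicity of $W_{\Phi(p)}$, which you already invoke for the denominator. Then for $0<\varepsilon<z$,
\[
\frac{\int_0^{z-\varepsilon} W^{(p)}(u)\,{\rm d}u}{\int_{z-\varepsilon/2}^{z} W^{(p)}(u)\,{\rm d}u}
\le \frac{W_{\Phi(p)}(z-\varepsilon/2)\int_0^{z-\varepsilon} e^{\Phi(p)u}\,{\rm d}u}{\tfrac{\varepsilon}{2}\,e^{\Phi(p)(z-\varepsilon/2)}\,W_{\Phi(p)}(z-\varepsilon/2)}
\le \frac{2\,e^{-\Phi(p)\varepsilon/2}}{\varepsilon\,\Phi(p)}\longrightarrow 0,
\]
because the positive factor $W_{\Phi(p)}(z-\varepsilon/2)$ cancels. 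Equivalently, the two-sided exit identity gives $W^{(p)}(u)\le e^{-\Phi(p)(v-u)}W^{(p)}(v)$ for $0<u\le v$.

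With this line your proof is complete and somewhat more elementary than the paper's. It needs only positivity and monotonicity of the scale function, not the uniform bound on $\hat u^{(p)}$. Your explicit mixture form also gives weak convergence, and convergence of $\Gamma$, directly without a continuity theorem. The paper's route in exchange gives explicit asymptotics. Its decomposition reads $\psi'(\Phi(p))W_{\Phi(p)}(u)=1-\psi'(\Phi(p))e^{-\Phi(p)u}\hat u^{(p)}(u)$, which is precisely the uniform control you were looking for.
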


\subsection{Diffusion case.}
Suppose that $Y$ is a one-dimensional diffusion process taking values in an interval $I$ with end points $-\infty\leq l_1\leq l_2\leq \infty$, defined as the solution to the SDE:
\begin{align}\label{sde}
    {\rm d} Y(t)=b(Y(t))\, {\rm d} t+\sigma(Y(t)) \, {\rm d} B(t),\quad t>0,
\end{align}
with initial condition $Y(0) = x$ under $\mathbb{P}_x$, where $B = (B(t); t\geq0)$ is the standard Brownian motion. We assume that, for $x,y\in I$,
\begin{align*}
    |b(x)-b(y)|+|\sigma(x)-\sigma(y)|&\leq K|x-y|, \quad b^2(x)+\sigma^2(x) \leq K^2(1+x^2),
\end{align*}
for a positive constant $K$ which assures that there exists a unique weak solution to \eqref{sde}. Using a comparison theorem for diffusion processes (see Corollary 3.1 in \cite{PZ}), we see that the process $Y$ satisfies Assumption \ref{assump_comparison}. Thus, Theorem \ref{thm_Gittins_exponential} can be applied to compute the Gittins index for $Y$.

To compute the Gittins index, we introduce two of the basic characteristics of the diffusion process $Y$: the speed measure $m$ and the scale function $s$. They are given by 
\begin{equation*}
    m({\rm d} x):=\frac{2e^{B(x)}}{\sigma^2(x)} {\rm d} x,\quad s(x):=\int^xe^{-B(y)} {\rm d} y, 
\end{equation*}
for $l_1< x< l_2$, where $B(x):=\int^x\frac{2b(y)}{\sigma^2 (y)} {\rm d} y$. For $\alpha>0$, we denote by $\psi_{\alpha}$ (resp. $\varphi_{\alpha}$) the increasing (resp. decreasing) positive solution to the ODE
\begin{equation}\label{ODE}
    \frac{1}{2}\sigma^2(x) u''(x) + b(x) u'(x)= \alpha u(x).
\end{equation}
The functions $\psi_{\alpha}$ and $\varphi_{\alpha}$ are the fundamental solutions to \eqref{ODE}. Denote by $w_{\alpha}$ the Wronskian, which is independent of $x$ and can be written as:
\begin{align*}
w_{\alpha}:=\psi_{\alpha}^+(x)\varphi_{\alpha}(x)-\psi_{\alpha}(x)\varphi_{\alpha}^+(x)
=\psi_{\alpha}^-(x)\varphi_{\alpha}(x)-\psi_{\alpha}(x)\varphi_{\alpha}^-(x),
\end{align*}
where $\psi_{\alpha}^+$ (resp. $\psi_{\alpha}^-$) is the right (resp. left) derivative of $\psi_{\alpha}$ with respect to the scale function $s$, i.e. 
\begin{align*}
    \psi_{\alpha}^+(x)&\coloneqq \lim_{h\downarrow 0}\frac{\psi_{\alpha}(x+h)-\psi_{\alpha}(x)}{s(x+h)-s(x)}, \quad \psi_{\alpha}^-(x) \coloneqq  \lim_{h\downarrow 0}\frac{\psi_{\alpha}(x-h)-\psi_{\alpha}(x)}{s(x-h)-s(x)}.
\end{align*}
The function $\varphi_{\alpha}^+$ (resp. $\varphi_{\alpha}^-$) is defined similarly.
The Green function $G_{\alpha}$ of $Y$ with respect to the speed measure $m$, can be written as 
\begin{align*}
    G_{\alpha}(x,y) &=
    \begin{cases}
    	w_{\alpha}^{-1}\psi_{\alpha}(x) \varphi_{\alpha}(y),\qquad x\leq y, \\
        w_{\alpha}^{-1}\psi_{\alpha}(y)\varphi_{\alpha}(x),\qquad y\leq x.
    \end{cases}
\end{align*}
The proof of Theorem \ref{thm_gittins_diffusion} is presented in Appendix \ref{Appx: Gittins diffusion}.
\begin{thm} \label{thm_gittins_diffusion}
	The Gittins index \eqref{gittins} becomes
	\begin{align}
		\Gamma(x)= \frac{H^{(q,\lambda)}(x; R)}{H^{(q,\lambda)}(x; 1)},  \quad x \in \mathbb{R},
	\end{align}
	where, for $f(z)=R(z)$ and $f(z) = 1$,
\begin{align*}
	H^{(q,\lambda)}(x; f)
    &\coloneqq f(x)\left(1-\lambda \int_{I \cap (x, \infty)}\frac{\varphi_{q}(z)}{\varphi_q(x)}G_{q+\lambda}(x,z) \, m({\rm d} z)\right) + \lambda\int_{I\cap(x,\infty)}f(z)G_{q+\lambda}(x,z) \, m({\rm d} z)\notag\\
    &+ \lambda^2\int_{I\cap (x, \infty)} G_{q+\lambda}(x,z) \int_{I \cap (x, \infty)} f(u)   \left(G_{q}(z,u)-\frac{\varphi_q(z)}{\varphi_q(x)} G_q(x,u)\right) \, m({\rm d} u) \, m({\rm d} z). \notag
\end{align*}
\end{thm}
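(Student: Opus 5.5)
Starting from Theorem \ref{thm_Gittins_exponential}, it suffices to compute, for $f=R$ and $f\equiv1$,
\[
F(x;f):=\mathbb{E}_x\Big[\int_0^{T_x^-}e^{-qs}f(Y(s))1_{\{Y(s)>x\}}\,{\rm d} s\Big],
\]
and then verify that $f(x)+\lambda F(x;f)=H^{(q,\lambda)}(x;f)$ up to a common factor independent of $f$, which cancels in the ratio. The key observation is the structure of $T_x^-$. It is the first Poisson epoch at which $Y$ lies in $(l_1,x]$, which means it is a Poisson-observed first passage below $x$. Because $Y$ is continuous, I decompose at the continuous first passage time $\sigma_x:=\inf\{t>0: Y(t)<x\}$. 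Started at $x$, a regular diffusion has $\sigma_x=0$ a.s. This is why the expressions involve both the killed-at-rate-$(q+\lambda)$ Green function $G_{q+\lambda}$ and the $q$-Green function. While $Y$ is above $x$ the Poisson clock cannot trigger stopping, while below $x$ it triggers at rate $\lambda$.

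Concretely, I would introduce the auxiliary functions
\[
g(z):=\mathbb{E}_z\Big[\int_0^{T_x^-}e^{-qs}f(Y(s))1_{\{Y(s)>x\}}{\rm d} s\Big],\qquad z\in I .
\]
By the Poisson construction, $g$ solves a resolvent equation. Killing at rate $\lambda$ on $\{Y\le x\}$ gives, for the Feynman--Kac potential,
\[
(q+\lambda 1_{\{z\le x\}})g-\mathcal{L}g=f1_{\{z>x\}},
\]
with $\mathcal{L}$ the generator of \eqref{sde}. I would solve this piecewise, or equivalently use a perturbation (resolvent) identity. One step of this identity uses $G_{q+\lambda}$ and the strong Markov property at Poisson times: writing $\int_0^{T_x^-}=\int_0^{\infty}$ minus the part after $T_x^-$, I get
\[
g(z)=U_q(f1_{(x,\infty)})(z)-\mathbb{E}_z[e^{-qT_x^-}U_q(f1_{(x,\infty)})(Y(T_x^-))],
\]
where $U_q h(z)=\int G_q(z,u)h(u)m({\rm d}u)$. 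The law of $(T_x^-,Y(T_x^-))$ then comes from the first Poisson epoch after entering $(l_1,x]$. Alternatively, I would write $g=\lambda$-weighted resolvents of the process killed on reaching $x$ from above, combined with excursions below $x$. For the diffusion killed on going below $x$, the $q$-Green function is $G_q(z,u)-\frac{\varphi_q(z)}{\varphi_q(x)}G_q(x,u)$ for $z,u>x$. This kernel appears in the double integral.

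The plan, in order, is as follows. First, using the sequence of Poisson epochs, express $F(x;f)$ through the Markov chain observed at the epochs. Second, sum the first-epoch contribution: from $x$, the first epoch's state has density $\lambda G_{q+\lambda}(x,z)m({\rm d}z)$, giving the term $\lambda\int f G_{q+\lambda}$. Third, for the continuation from a state $z>x$ at an epoch, use the killed $q$-Green function up to the continuous crossing of $x$; this produces the $\lambda^2\iint$ term. At the crossing the process is at $x$, and the loss of mass there is expressed by $\varphi_q(z)/\varphi_q(x)=\mathbb{E}_z[e^{-q\sigma_x}]$. This gives the correction factor multiplying $f(x)$. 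The renewal back at level $x$ would produce a geometric factor equal to $1/(1-\lambda\int \frac{\varphi_q(z)}{\varphi_q(x)}G_{q+\lambda}(x,z)m({\rm d}z))$, or similar. Multiplying through by the inverse of this common factor turns $f(x)+\lambda F$ into $H^{(q,\lambda)}(x;f)$, and the common factor cancels in the ratio $\Gamma(x)$.

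I expect the main obstacle to be the bookkeeping at the level $x$. The process started at $x$ immediately dips below $x$ and returns, and accounting for the returns to $x$ (a renewal structure with possibly infinitely many small excursions) has to collapse exactly into the single factor $1-\lambda\int\frac{\varphi_q}{\varphi_q(x)}G_{q+\lambda}\,{\rm d}m$. I would handle this by first computing with $x$ replaced by a starting point $x+\varepsilon$ and letting $\varepsilon\downarrow0$. Alternatively, I would directly verify that the proposed $H$ satisfies the ODE piecewise with the correct continuity and $C^1$-in-scale matching at $x$ and boundary behaviour at $l_1,l_2$, using the Wronskian identities and the explicit form of $G_\alpha$.
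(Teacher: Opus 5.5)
Your steps two and three are the paper's argument (Proposition \ref{res_per_diff}). You apply the strong Markov property at the first Poisson epoch $\mathbf{e}_\lambda$ from $x$; from a state $z>x$ you use the killed kernel $G_q(z,u)-\frac{\varphi_q(z)}{\varphi_q(x)}G_q(x,u)$ and $\mathbb{E}_z[e^{-q\kappa_x}]=\varphi_q(z)/\varphi_q(x)$ to return to $x$, which gives a single linear equation for $g(x,x)$ with solution carrying the factor $\Theta(x)^{-1}$.

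The obstacle you anticipate at level $x$ does not arise. The renewal is taken at the first hitting of $x$ after a Poisson epoch in $(x,\infty)$, not at every return of $Y$ to $x$, so excursions below $x$ between epochs are already accounted for in $G_{q+\lambda}$, and neither the $\varepsilon$-shift nor the ODE verification is needed.
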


\section{Numerical experiments}\label{Sect: experiment}
\begin{table*}[!tp]
\caption{Numerical results for diffusion and L\'evy-based models; homogeneous setting.}
\label{Table: numerical homogeneous}
\centering
\tiny
\setlength{\tabcolsep}{3pt}
\renewcommand{\arraystretch}{0.95} 
\resizebox{\textwidth}{!}{
\begin{tabular}{c c c c c c c  c c c} 
 \multicolumn{1}{c}{} & \multicolumn{3}{c}{GI} & \multicolumn{3}{c}{Myopic} & \multicolumn{3}{c}{GI-cts}  \rule{0pt}{0ex}\\
 \multicolumn{1}{c}{} & \multicolumn{1}{c}{mean} & \multicolumn{1}{c}{se} & \multicolumn{1}{c}{95\% CI} & \multicolumn{1}{c}{mean} & \multicolumn{1}{c}{se} & \multicolumn{1}{c}{95\% CI} & \multicolumn{1}{c}{mean} & \multicolumn{1}{c}{se} & \multicolumn{1}{c}{95\% CI}\\
  [0.5ex] 
  \multicolumn{10}{c}{}
  \\[-0.9em]
  \hline\hline
  \multicolumn{1}{c}{} &
  \multicolumn{9}{c}{$R:x \mapsto x$} \\

  RBM & \textbf{1.6995} & 3.7664 & (1.4660, 1.9329) & 0.2582 & 0.6198 & (0.2198, 0.29664) &  &  & \\
  OU & \textbf{0.1249} & 0.3202 & (0.1050, 0.1447) & 0.0855 & 0.2173 & (0.0721, 0.0990) &  &  & \\
 BM & \textbf{1.6443} & 3.9322 & (1.5672, 1.7214) & 0.2506 & 0.6423 & (0.2381, 0.2632) & 1.4982 & 4.0473 & (1.4188, 1.5775)\\
 SNLP & \textbf{2.0427} & 4.5659 & (1.9532, 2.1322) & 2.0315 & 4.5542 & (1.9423, 2.1208) & 1.8010 & 4.8243 & (1.7065, 1.8956)\\
 RSNLP & \textbf{1.0860} & 3.1777 & (1.0238, 1.1483) & 0.0650 & 0.3836 & (0.0575, 0.0725) & 0.9321 & 3.3136 & (0.8672, 0.9971)\\
 [0.5ex] 
  \hline\hline
  \multicolumn{1}{c}{} &
  \multicolumn{9}{c}{$R:x \mapsto 1/(1 + e^{-x})$} \\
  RBM & \textbf{1.0778} & 0.1643 & (1.0676, 1.0880) & 1.0377 & 0.0783 & (1.0328, 1.0425) &  &  & \\
  OU & \textbf{1.0282} & 0.0700 & (1.0239, 1.0325) & 1.0199 & 0.0493 & (1.0169, 1.0230) &  &  & \\
 BM & \textbf{1.0733} & 0.1632 & (1.0701, 1.0765) & 1.0377 & 0.0827 & (1.0360, 1.0393) & 1.0699 & 0.1664 & (1.0666, 1.0731)\\
 SNLP & \textbf{1.0824} & 0.1787 & (1.0789, 1.0859) & 1.0824 & 0.1787 & (1.0789, 1.0859) & 1.0770 & 0.1860 & (1.0733, 1.0806)\\
 RSNLP & \textbf{1.0570} & 0.1514 & (1.0540, 1.0599) & 1.0079 & 0.0442 & (1.0070, 1.0087) & 1.0526 & 0.1558 & (1.0495, 1.0556)\\
  [0.5ex] 
  \hline\hline
  \multicolumn{1}{c}{} &
  \multicolumn{9}{c}{$R:x \mapsto \log(1 + e^x)$} \\
  RBM & \textbf{2.9718} & 3.6035 & (2.7485, 3.1952) & 1.5857 & 0.5304 & (1.5528, 1.6186) &  &  & \\
  OU & \textbf{1.4641} & 0.2087 & (1.4512, 1.4771) & 1.4376 & 0.1364 & (1.4291, 1.4460) &  &  & \\
 BM & \textbf{2.9268} & 3.7619 & (2.8531, 3.0006) & 1.5785 & 0.5467 & (1.5677, 1.5892) & 2.8982 & 3.7869 & (2.8239, 2.9724)\\
 SNLP & \textbf{3.3254} & 4.3904 & (3.2394, 3.4115) & 3.3056 & 4.3618 & (3.2201, 3.3911) & 3.3022 & 4.4187 & (3.2156, 3.3888)\\
 RSNLP & \textbf{2.3992} & 3.0161 & (2.3401, 2.4583) & 1.4448 & 0.3249 & (1.4385, 1.4512) & 2.3613 & 3.0442 & (2.3017, 2.4210)\\
 [0.5ex] 
 \hline
\end{tabular}}
\end{table*}

\begin{table*}[!tp]
\caption{Numerical results for diffusion and L\'evy-based models; partially homogeneous and inhomogeneous settings.}
\label{Table: numerical inhomogeneous}
\centering
\tiny
\setlength{\tabcolsep}{3pt}
\renewcommand{\arraystretch}{0.95} 
\resizebox{\textwidth}{!}{
\begin{tabular}{c c c c c c c  c c c} 
 \multicolumn{1}{c}{} & \multicolumn{3}{c}{GI} & \multicolumn{3}{c}{Myopic} & \multicolumn{3}{c}{GI-cts}  \rule{0pt}{0ex}\\
 \multicolumn{1}{c}{} & \multicolumn{1}{c}{mean} & \multicolumn{1}{c}{se} & \multicolumn{1}{c}{95\% CI} & \multicolumn{1}{c}{mean} & \multicolumn{1}{c}{se} & \multicolumn{1}{c}{95\% CI} & \multicolumn{1}{c}{mean} & \multicolumn{1}{c}{se} & \multicolumn{1}{c}{95\% CI}\\
  [0.5ex] 
  \multicolumn{10}{c}{}
  \\[-0.9em]
  \hline\hline
  \multicolumn{1}{c}{} &
  \multicolumn{9}{c}{$R:x \mapsto x$} \\

RBM & \textbf{3.5775} & 5.6421 & (3.2278, 3.9272) & 0.3236 & 0.8120 & (0.2733, 0.3739) &  &  & \\
OU & \textbf{0.1729} & 0.3411 & (0.1518, 0.1941) & 0.0918 & 0.2327 & (0.0774, 0.1062) &  &  & \\
BM & \textbf{3.6431} & 5.8846 & (3.5278, 3.7585) & 0.3242 & 0.8370 & (0.3078, 0.3406) & 3.3862 & 6.1339 & (3.2660, 3.5064)\\
SNLP & \textbf{1.9622} & 4.3076 & (1.8777, 2.0466) & 1.9518 & 4.2960 & (1.8676, 2.0360) & 1.7152 & 4.5493 & (1.6260, 1.8043) \\
RSNLP & \textbf{2.3143} & 4.7861 & (2.2205, 2.4081) & 0.1265 & 0.6719 & (0.1133, 0.1396) & 2.0211 & 5.0513 & (1.9221, 2.1201) \\
 [0.5ex] 
  \hline\hline
  \multicolumn{1}{c}{} &
  \multicolumn{9}{c}{$R:x \mapsto 1/(1 + e^{-x})$} \\
  RBM & \textbf{1.1614} & 0.2223 & (1.1476, 1.1752) & 1.0417 & 0.0875 & (1.0363, 1.0471) &  &  & \\
 OU & \textbf{1.0396} & 0.0757 & (1.0349, 1.0442) & 1.02149 & 0.0536 & (1.0182, 1.0248) &  &  & \\
 BM & \textbf{1.1683} & 0.2275 & (1.1638, 1.1727) & 1.0415 & 0.0874 & (1.0398, 1.0432) & 1.1608 & 0.2355 & (1.1561, 1.1654)\\
 SNLP & \textbf{1.0785} & 0.1717 & (1.0752, 1.0819) & 1.0784 & 0.1717 & (1.0751, 1.0818) & 1.0712 & 0.1795 & (1.0677, 1.0747)\\
 RSNLP & \textbf{1.1247} & 0.2169 & (1.1205, 1.1290) & 1.0108 & 0.05810 & (1.0096, 1.0119) & 1.1134 & 0.2284 & (1.1089, 1.1178) \\
  [0.5ex] 
  \hline\hline
  \multicolumn{1}{c}{} &
  \multicolumn{9}{c}{$R:x \mapsto \log(1 + e^x)$} \\
  RBM & \textbf{4.7892} & 5.4798 & (4.4496, 5.1289) & 1.6467 & 0.7183 & (1.6022, 1.6912) &  &  & \\
  OU & \textbf{1.4934} & 0.2199 & (1.4798, 1.5070) & 1.4415 & 0.1435 & (1.4326, 1.4504) &  &  & \\
 BM & \textbf{4.8426} & 5.6862 & (4.7312, 4.9541) & 1.6478 & 0.7438 & (1.6332, 1.6624) & 4.8253 & 5.7156 & (4.7133, 4.9373)\\
 SNLP & \textbf{3.3142} & 4.2469 & (3.2310, 3.3975) & 3.2973 & 4.2255 & (3.2144, 3.3801) & 3.2863 & 4.2771 & (3.2025, 3.3701)\\
 RSNLP & \textbf{3.5788} & 4.5903 & (3.4888, 3.6688) & 1.51938 & 0.5929 & (1.5078, 1.5310) & 3.5502 & 4.6212 & (3.4596, 3.6408)\\
 [0.5ex]
   \hline\hline
  \multicolumn{1}{c}{} &
  \multicolumn{9}{c}{Inhomogeneous setting} \\
  & \textbf{1.5573} & 0.4739 & (1.5480, 1.5666) & 1.5545 & 0.4739 & (1.5452, 1.5638) & 1.5466 & 0.4863 & (1.5371, 1.5562) \\
  \hline
\end{tabular}}
\end{table*}
We now present numerical experiments demonstrating the optimality of the Gittins index strategy for the cases in Section 4. The dynamics of each arm follow one of the following models: a Brownian motion (BM), a reflected BM (RBM), an Ornstein-Uhlenbeck (OU) process, a spectrally negative L\'evy process with exponential jumps (SNLP), or a reflected spectrally negative L\'evy process with exponential jumps (RSNLP). The initial value of each arm is set at $0$. The notation and definitions for each model are given below.\\
\textbf{BM}: A Brownian motion with diffusion coefficient $\sigma > 0$ is denoted by $\text{BM}(\sigma)$.  
\\
\textbf{RBM}: A Brownian motion with $\sigma > 0$ reflected from below at $\alpha < 0$ is denoted by $\text{RBM}(\alpha, \sigma)$.
\\
\textbf{OU}: An Ornstein-Uhlenbeck process with long-term mean $\gamma > 0$,  specified by \eqref{sde} with $b(y) = -\gamma y$ and $\sigma(y) = 1$ is denoted by $\text{OU}(\gamma)$. 
\\
\textbf{SNLP}: A spectrally negative L\'evy process with exponential jumps
$Y(t) = \mu t + \sigma B(t) - \sum^{N(t)}_{n = 1} Z_n$, $t \geq 0$,
is denoted by $\text{SNLP}(\mu, \sigma, \ell, r)$. Here, $B$ is a standard Brownian motion, $N$ is a Poisson process with arrival rate $\ell$, and $\{Z_n\}_{n \geq 1} \overset{\text{i.i.d.}}{\sim} \text{Exp}(r)$.
\\
\textbf{RSNLP}: A reflected spectrally negative L\'evy process with exponential jumps with barrier $\alpha \in \mathbb{R}$ is denoted by $\text{RSNLP}(\alpha, \mu, \sigma, \ell, r)$.

The analytical expressions for the Gittins indices of the five models presented above are given in Appendix \ref{Appx: Gittins explicit}.  In all experiments, the discount rate is set at $q = 0.5$ and the number of arms is fixed at $J = 3$.

Denote by $N \in \mathbb{N}$ the number of sample paths generated for each experiment, and fix $T = 50$ as the truncation parameter. The algorithm used to compute the path-wise rewards under the Gittins index strategy is described in Appendix \ref{Appx: algo}. 

The first set of experiments considers a purely homogeneous setup, in which all arms follow the same model (i.e., one of the five models listed above), share a common reward function chosen from the identity $R(x) = x$, sigmoid $R(x) = (1 + e^{-x})^{-1}$, or softplus $R(x) = \log\!\left(1 + e^{x}\right)$, and share a common exponential holding time $G\sim \text{Exp}(\lambda = 0.1)$. Then, we consider a partially homogeneous setup, in which all arms follow the same model and share a common reward function, but the exponential holding times are arm-dependent. Finally, we consider an inhomogeneous setup in which the model, the reward function, and the exponential holding time vary across arms. 

\begin{table}[!tp]
\centering
\caption{Model parameters for numerical experiments.}
\label{table_parameters}
\scriptsize
\setlength{\tabcolsep}{2pt}
\renewcommand{\arraystretch}{1.15}

\begin{tabular}{@{}l c p{0.23\linewidth} p{0.23\linewidth} p{0.23\linewidth}@{}}
\multicolumn{5}{c}{Homogeneous setting}\\
Model & $N$ & $Y^1$ & $Y^2$ & $Y^3$ \\
RBM & $1,000$ &
$\text{RBM}(-10, 1)$ &
$\text{RBM}(-5, 5)$ &
$\text{RBM}(-20, 10)$ \\
OU & $1,000$ &
$\text{OU}(1)$ &
$\text{OU}(1/2)$ &
$\text{OU}(2)$ \\
BM & $10,000$ &
$\text{BM}(1)$ &
$\text{BM}(5)$ &
$\text{BM}(10)$ \\
SNLP & $10,000$ &
\text{SNLP}(2, 10, 2, 2) &
\text{SNLP}(0, 5, 4, 2) &
\text{SNLP}(1, 1, 6, 2) \\
RSNLP & $10,000$ &
\text{RSNLP}(-10, 1/2, 1, 6, 2) &
\text{RSNLP}(-15, -1/2, 5, 4, 2) &
\text{RSNLP}(-20, -1, 10, 2, 2) \\
\hline
\hline
\multicolumn{5}{c}{Inhomogeneous setting}\\
Arm & $N$ & Holding time & Parameters & Reward \\
$Y^1$ & 10,000 & $\text{Exp}(\lambda = 0.1)$ & \text{BM}(1) & Softplus\\
$Y^2$ & 10,000 & $\text{Exp}(\lambda = 0.2)$ & \text{SNLP}(1, 1, 6, 2) & Sigmoid\\
$Y^3$ & 10,000 & $\text{Exp}(\lambda = 0.3)$ & \text{RSNLP}(-5, 1, 1, 6, 2) & Identity\\
\end{tabular}
\end{table}

The specific parameters used for each setup are provided below. For the homogeneous and inhomogeneous settings, we use the parameters given in Table \ref{table_parameters}. For the partially homogeneous setup, the same parameters are used for each model, except that the exponential holding times for $Y^1$, $Y^2$, and $Y^3$ are $G^1\sim \text{Exp}(\lambda^1 = 0.1)$, $G^2\sim \text{Exp}(\lambda^2 = 0.2)$, and $G^3\sim \text{Exp}(\lambda^3 = 0.3)$, respectively. 

To benchmark the Gittins index strategy, we consider a myopic strategy and, where applicable, a continuous-time Gittins index strategy as reviewed in Section \ref{subsec_levy_case} as benchmarks.  At the onset of each period, the myopic strategy selects the arm with the highest current reward. When the reward functions are homogeneous across arms, the myopic strategy always selects the arm with the largest current state.

On the other hand, the continuous-time strategy is an analogue of our Gittins index strategy, with our Gittins index function replaced by its continuous-time counterpart. Explicit expressions for the latter are available for (reflected) spectrally one-sided L\'evy processes; see Section \ref{sec_exponential}. Hence, we include this benchmark only when the model corresponds to a SNLP or an RSNLP.

In Tables \ref{Table: numerical homogeneous} and \ref{Table: numerical inhomogeneous}, we present summary statistics from our numerical experiments. For each model and strategy, we report the mean and standard error of the path-wise rewards, together with the 95\% confidence interval (CI) for the mean. In particular, we observe that the numerical results support the superior performance of the Gittins index strategy compared with the benchmark strategies. 

\subsection{Convergence.}
We conclude the numerical analysis section by examining the (pointwise) convergence of the Gittins index function under the SNLP and RSNLP models. As established in Section \ref{sec_exponential}, the Gittins index function under a RSNLP/SNLP model with given parameters converge to the corresponding continuous-time Gittins index function as the exponential arrival rate $\lambda$ tends to infinity. This is confirmed in Figure \ref{Fig: convergence}. For different values of $\lambda$, the blue and green curves, which represent the Gittins index functions under RSNLP and SNLP, respectively, converge pointwise to the red curve, which corresponds to the continuous-time Gittins index function.

\begin{figure}[t]
\centering
\includegraphics[width=0.45\textwidth]{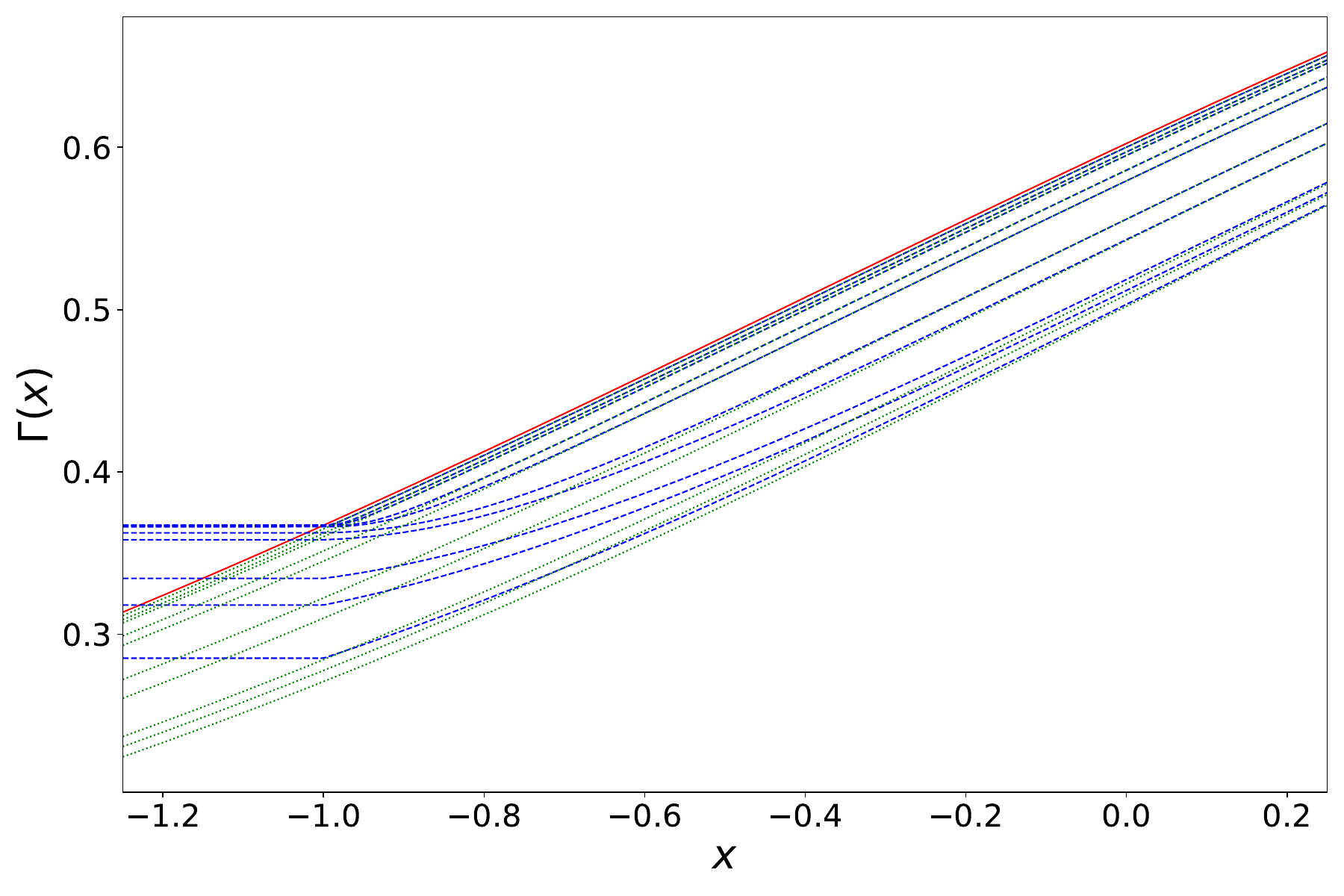}
\caption{\small Convergence of the Gittins index. The red, blue, and green curves correspond to the continuous-time Gittins index, the Gittins indices of a RSNLP, and the Gittins indices of a SNLP, respectively.}
\label{Fig: convergence}
\end{figure}

\section*{acknowledgements}
K. Noba was supported by JSPS KAKENHI grant JP21K13807 and JSPS Open Partnership Joint Research Projects grant JPJSBP120209921. In addition, Kei Noba stayed at Centro de Investigaci\'on en
Matem\'aticas in Mexico as a JSPS Overseas Research Fellow and received support for the research environment there. J. L. P\'erez gratefully acknowledges the support of the Fulbright
Program during his sabbatical stay at Arizona State University. He also wishes
to thank the faculty and staff of the School of Mathematical and Statistical Sciences at
ASU for their hospitality. K. Yamazaki was supported by JSPS KAKENHI grants JP20K03758, JP24K06844, and JP24H00328.

\bibliographystyle{abbrv}        
\bibliography{citations}           

\appendix
\section{Proof of results in section 3}
\subsection{Proof of Lemma \ref{WF}.} \label{proof_wf}
(i) Fix $0 < r < 1$ and $p = 1-r$. As in the classical Wiener-Hopf factorization for random walks (see, for instance, \cite{greenwood}), using the lack of memory of the geometric random variable $\Xi_{p}$ and the strong Markov property, 
	\begin{align}\label{desc_1}
		(T_G,S_G) \overset{(d)}{=}
		\sum_{i=1}^{\nu}(\tau^i,H^i),
	\end{align}
	where $\nu$ is an $\mathbb{N}_0$-valued geometric random variable with parameter $p_+:=\mathbb{P}(N> \Xi_{p})$, with $(T_G,S_G)=(0,0)$ for the case $\nu=0$. Additionally, $\{(\tau^i, H^i)\}_{i=1}^{\infty}$ is a sequence of i.i.d.\ random variables, whose common distribution equals that of $(T_N,S_N)$ conditioned on the event 
    \[A:= \{N\leq \Xi_{p}\}.
    \]
In addition, $\nu$ is independent of $\{(\tau^i, H^i)\}_{i=1}^{\infty}$. Then, 
	\begin{align}\label{desc_0}
    \begin{split}
		\mathbb{E}(e^{-qT_N+\mathrm{i} \theta S_N};A)
        &=\sum_{n=1}^{\infty}\mathbb{E}(e^{-qT_n+\mathrm{i} \theta S_n}; N=n)r^n  =\varphi(q,\theta; r).
    \end{split}
	\end{align}
In particular, setting $q = \theta = 0$ gives
	\begin{align}\label{prob_q+}
		q_+ := 1 - p_+ = \mathbb{P}(A)
		=\varphi(0,0; r).
	\end{align}
	Hence, by \eqref{desc_0} and \eqref{prob_q+},
	\begin{align*}
	\mathbb{E}\Big[e^{-q \tau^i +\mathrm{i} \theta H^i } \Big] = \mathbb{E} \left[ e^{-qT_N+\mathrm{i} \theta S_N}| A \right] = {\varphi(q,\theta;r)} / {q_+}.
	\end{align*}
	Using \eqref{desc_1}, \eqref{desc_0}, \eqref{prob_q+}, and the fact that $\nu$ is an independent geometric random variable with parameter $p_+$, 
	\begin{align*}
		\mathbb{E}\left[e^{-qT_G+\mathrm{i} \theta S_G}\right]
        = \mathbb{E}\left[ \mathbb{E}\left[e^{\sum_{i=1}^{\nu} (-q \tau^i+\mathrm{i} \theta H^i)} | \nu \right]\right]
        =\mathbb{E}\left[\left(\frac{\varphi(q,\theta;r)}{\varphi(0,0;r)}\right)^{\nu}\right]=\frac{p_+}{1-\varphi(q,\theta;r)}.
	\end{align*}
On the other hand, applying a similar argument for $(T_D,S_D)$, we obtain
	\begin{align*}
		\mathbb{E}\left[e^{-qT_D+\mathrm{i} \theta S_{D}}\right]=\mathbb{E}\left[\left(\frac{\varphi(q,\theta;r)}{\varphi(0,0;r)}\right)^{\hat{\nu}}\right]=\frac{p_-}{1-\hat{\varphi}(q,\theta;r)},
	\end{align*}
where  $p_- :=\mathbb{P}(\hat{N}> \Xi_{p})$ and $\hat{\nu}$ is a geometric random variable with parameter $p_-$.
	Finally, recalling \eqref{kappa_varphi},
	\begin{align*}
		\mathbb{E}\left[e^{-qT_{\Xi_{p}}+\mathrm{i} \theta S_{\Xi_{p}}}\right] =\sum_{k=0}^{\infty}\mathbb{E}\left[e^{-qT_k+\mathrm{i} \theta S_k}\right]r^{k}p
		=\sum_{k=0}^{\infty}\mathbb{E}\left[e^{-qT_1+\mathrm{i} \theta S_1}\right]^kr^{k}p=\frac{p}{1-r\kappa(q,\theta)}.
	\end{align*}
Now let us consider the random pair $(T_{\Xi_{p}}-T_G,S_{\Xi_{p}}- S_G)$. Using Feller's Duality Lemma for random walks (cf. \cite{Feller}) and the fact that $T_k$ is the sum of $k$ independent random variables we obtain that $\{(T_n-T_{n-k},S_{n-k}-S_n):k=0,1,\dots,n\}$ has the same distribution as $\{(T_k,-S_k):k=0,1,\dots,n\}$. Because the first visit to the maximum $G$ for the original random walk corresponds to the last visit to the minimum of the reversed random walk which has the same law as $-S$, $(T_{\Xi_{p}}-T_G,S_G-S_{\Xi_{p}})\overset{\mathcal{L}}{=}(T_D,-S_D)$ or equivalently $(T_{\Xi_{p}}-T_G,S_{\Xi_{p}}-S_G)\overset{\mathcal{L}}{=}(T_D,S_D)$. Since $\Xi_{p}$ is independent of the process $(T_n,S_n)_{n\geq1}$, and the latter has independent and stationary increments, the random pairs $(T_G, S_G)$ and $(T_{\Xi_{p}}-T_G,\, S_{\Xi_{p}}-S_G)$ are independent. Thus,
	\begin{align*}
\mathbb{E}\left[e^{-qT_{\Xi_{p}}+\mathrm{i} \theta S_{\Xi_{p}}}\right]
        &= \mathbb{E}\left[e^{-q (T_{\Xi_{p}}- T_G) +\mathrm{i} \theta (S_{\Xi_{p}}-S_G)}\right] \mathbb{E}\left[e^{-q T_G +\mathrm{i} \theta S_G}\right] = \mathbb{E}\left[e^{-q T_D +\mathrm{i} \theta S_D}\right]\mathbb{E}\left[e^{-q T_G +\mathrm{i} \theta S_G}\right]  
    \end{align*}
or
	\begin{align}\label{spitzer_ran}
		\frac{p}{1-r\kappa(q, \theta)}=\frac{p_-}{1-\varphi(q,\theta;r)}\frac{p_+}{1-\hat{\varphi}(q,\theta;r)}.
	\end{align}
	By Spitzer's identity for random walks (see identity (1) in \cite{greenwood}), we have that
	\begin{align}\label{spitzer}
		p=1-r=(1-\mathbb{E}\left[r^N\right])(1-\mathbb{E}\big[r^{\hat{N}}\big])
        =(1-q_+)(1-q_-)=p_+p_-.
	\end{align}
	Hence using \eqref{spitzer_ran} together with \eqref{spitzer} gives \eqref{spitzer_ran_1} for $0 < r < 1$.  The case $r=1$ holds by  dominated convergence.

\subsection{Proof of Proposition \ref{imp_lem}.}\label{use_lemma}

Let $E_- := (-\infty, 0]\times[0,\infty)$, $E_+ :=  (0, \infty) \times[0,\infty)$, and $E:= E_- \cup E_+ = \mathbb{R} \times [0, \infty)$. Define, for any $n \in \mathbb{N}_0$ and any Borel set $I\times J\in\mathcal{B} (E)$, the following measure
\begin{align}
    \nu_n(I\times J) :=\mathbb{P}(N \geq n, S_n\in I \cap (-\infty,0], T_n\in J)
    =\mathbb{P}(N > n, S_n\in I, T_n\in J).
\end{align}
In particular, $\nu_0 (I\times J):=1_{\{(0, 0)\in I\times J\}}$, and $\nu_n(E_+) = 0$. Additionally, for the joint distribution of $(N,T_{N},S_N)$ we denote 
\begin{align*}
	\tilde{\nu}_n(I\times J):=\mathbb{P}(N=n, S_N\in I, T_N\in J).
\end{align*}
By the definition of $N$, $\tilde{\nu}_n(E_-) = 0$. Because $(T_n, S_n; n  \in \mathbb{N}_0)$ has independent and stationary increments, 
\begin{align*}
    \nu_{n+1}(I\times J) &= \int_{E_-}\nu_n({\rm d} y,{\rm d}  s)\nu((I-y)\times(J-s)), \qquad \textrm{for } I\times J\in\mathcal{B}(E_-),\\
    \tilde{\nu}_{n+1}(I\times J) &= \int_{E_-}\nu_n({\rm d} y,{\rm d} s)\nu((I-y)\times(J-s)),\qquad \textrm{for } I\times J\in\mathcal{B}(E_+),
\end{align*}
where $\nu(I\times J):=\mathbb{P}(S_1 \in I, T_1\in J)$, $I\times J \in 
	\mathcal{B} (E)$.
In particular, for the case $n=0$, we have that $\nu_{1}(I\times J)=\nu(I\times J)$ for $I\times J\in\mathcal{B}(E_-)$ and $\tilde{\nu}_1(I\times J)=\nu(I\times J)$ for $I\times J\in\mathcal{B}(E_+)$. Summing these and because $\tilde{\nu}_{n+1}(E_-) = \nu_{n+1}(E_+)=0$, for $I\times J\in\mathcal{B}(E)$,
\begin{multline}\label{lem_1}
	\tilde{\nu}_{n+1}(I\times J)+\nu_{n+1}(I\times J)\\
    =\int_{E_-}\nu_n({\rm d} y,{\rm d} s)\nu((I-y)\times(J-s))
    =\int_{E}\nu_n({\rm d} y,{\rm d} s)\nu((I-y)\times(J-s)).
\end{multline}
 Now, for $r \in (0,1)$, $q \geq 0$ and $\theta\in\mathbb{R}$, we have
\begin{align}
\begin{split}
	\varphi(q,\theta; r)&=\sum_{n=1}^{\infty}r^n
	\int_{E_+}e^{-qt+\mathrm{i}\theta y}\tilde{\nu}_n({\rm d} y,{\rm d} t)=\sum_{n=1}^{\infty}r^n
	\int_{E}e^{-qt+\mathrm{i}\theta y}\tilde{\nu}_n({\rm d} y, {\rm d} t), \label{xi_expression}
\end{split}\\ 
\begin{split}
    \gamma(q,\theta; r)&:= \sum_{n=0}^{\infty}r^n 
	\int_{E_-}e^{-qt+\mathrm{i}\theta y}\nu_n({\rm d} y,{\rm d} t)= \sum_{n=0}^{\infty}r^n
	\int_{E}e^{-qt+\mathrm{i}\theta y}\nu_n({\rm d} y, {\rm d} t). \label{gamma_expression}
\end{split}
\end{align}
Recalling $\nu_0 (I\times J):=1_{\{(0, 0)\in I\times J\}}$ and $\kappa(q,\theta) = \int_{E}e^{-qs+\mathrm{i} \theta x}\nu({\rm d} s,{\rm d} x)$, we have by \eqref{lem_1} and the convolution theorem,
\begin{align*}
    \varphi(q,\theta; r) + \gamma(q,\theta; r)&= 1+\sum_{n=1}^{\infty}r^n \int_{E}e^{-qt+\mathrm{i} \theta y} \int_{E}\nu_{n-1}({\rm d} y',{\rm d} s')\nu(({\rm d} y-y')\times({\rm d} t-s')) \\
    &= 1+\sum_{n=1}^{\infty}r^n\int_{E}e^{-qt+\mathrm{i}\theta y} \nu_{n-1}({\rm d} y, {\rm d} t) \int_{E}e^{-qt+ \mathrm{i}\theta y} \nu({\rm d} y,{\rm d} t) \\
    &= 1+ r \kappa(q,\theta) \sum_{n=0}^{\infty}r^n \int_{E}e^{-qt+\mathrm{i}\theta y} \nu_{n}({\rm d} y,{\rm d} t) \\
    &= 1+ r \gamma(q,\theta; r) \kappa(q,\theta).
\end{align*}
Rearranging the terms  and taking logarithms gives
\begin{align}\label{lem_3}
	-\log (1-r\kappa(q,\theta))= - \log (1-\varphi(q,\theta; r)) 
    + \log \gamma(q,\theta; r).
\end{align}
For $|x|<1$,  Taylor expansion gives
\begin{align*}
-\log(1-x)=\sum_{n=1}^{\infty}\frac{x^{n}}{n}, \quad \log(1+x)=\sum_{n=1}^{\infty}(-1)^{n+1}\frac{x^n}{n}. 
\end{align*}
Applying this and the convolution theorem, the left hand side of \eqref{lem_3} becomes
\begin{multline}\label{lem_7}
	-\log (1-r\kappa(q,\theta)) = \sum_{n=1}^{\infty}\frac{r^n}{n} \kappa^n (q,\theta)
    =\sum_{n=1}^{\infty}\frac{r^n}{n}\int_{E}e^{-qt+\mathrm{i}\theta x}\nu^{*n}({\rm d} x,{\rm d} t) =K(E_+) + K(E_-)
\end{multline}
where $\nu^{*n}$ is the $n$-th convolution power of $\nu$ and 
\[
K(A) := \sum_{n=1}^{\infty}\frac{r^n}{n}\int_{A}e^{-qt+\mathrm{i}\theta x}\mathbb{P}\left(S_n\in {\rm d} x,T_n\in {\rm d} t\right).
\]
Applying Taylor exapansion on the right hand side of \eqref{lem_3}, and then \eqref{lem_7} gives
\begin{align}\label{lem_5}
    K(E_+) + K(E_-) =\sum_{n=1}^{\infty}\frac{\varphi^n(q,\theta; r)}{n}
    +\sum_{n=1}^{\infty} \frac{(-1)^{n+1}}{n}(\gamma(q,\theta; r)-1)^n.
\end{align}
Note that $K(E_+)$ and $K(E_-)$ are the Fourier transforms of finite measures on $E_+$ and $E_-$, respectively. Applying a similar argument to \eqref{xi_expression} and \eqref{gamma_expression}, we have that $\varphi(q, \theta; r)$ and $\gamma(q, \theta; r)$ are the Fourier transforms of two finite measures on $E_+$ and $E_-$, respectively. By the convolution theorem, $\varphi^n(q, \theta; r)$ is the Fourier transform of a measure on $E_+$, and therefore so is the first series on the right-hand side of \eqref{lem_5}. Again using the convolution theorem, $(\gamma(q,\theta;r)-1)^n$ is the Fourier transform of a measure on $E_-$. Since the second series on the right-hand side of \eqref{lem_5} is absolutely convergent, we may separate it into its positive and negative parts. Each part is the Fourier transform of a finite measure on $E_-$, and therefore the original series is the Fourier transform of a measure on $E_-$. By the uniqueness of the Fourier transform, and by restricting to the set $E_+$, we have
\begin{align*}
	K(E_+) = \sum_{n=1}^{\infty}\frac{ \varphi^n(q,\theta; r)}{n} = - \log (1-\varphi(q,\theta; r)),
\end{align*}
which completes the proof for $r < 1$. The case of $r=1$ holds by monotone convergence.

\section{Proof of results in section 4}

\subsection{Proof of Proposition \ref{thm_gittins_reflected}.}
We first review some fluctuation identities which are required for the proof.

For $x, y \in \mathbb{R}$, define
\begin{align*}
    \mathscr{Z}^{(q,\lambda)}(x, y) \coloneqq Z^{(q + \lambda)}(x - y)
    - \lambda\int^x_0W^{(q)}(x - z) Z^{(q + \lambda)}(z - y)\, {\rm d} z,
\end{align*}
which, by identity (6) in \cite{Loeffen}, can also be written as 
\begin{align}\label{Eq: W, Z scr}
    \mathscr{Z}^{(q,\lambda)}(x, y) = Z^{(q)}(x - y)
    + \lambda\int_{y}^{0} W^{(q)}(x - u)Z^{(q + \lambda)}(u - y)\, {\rm d} u.
\end{align}
Fix $a < 0$. Define the first passage times of the L\'evy process $Y$ for $c \in \mathbb{R}$: 
\begin{align} \label{first_passage_time}
\begin{split}
    \kappa_c^- &\coloneqq \inf\{t >  0: Y(t) < c\}, \quad \kappa_c^+ \coloneqq \inf \{ t > 0: Y(t) > c\}.
\end{split}
\end{align} 
By Lemma 2.1 in \cite{Loeffen}, for $0 < x < c$,
\begin{align*}
    \mathbb{E}_x \left[e^{-q\kappa_0^-} Z^{(q + \lambda)}(Y(\kappa_0^-) - a); \kappa_0^- < \kappa_c^+\right]
    = \mathscr{Z}^{(q,\lambda)}(x, a) - \frac{W^{(q)}(x)}{W^{(q)}(c)} \mathscr{Z}^{(q,\lambda)}(c, a).
\end{align*}
The following limits are known, for $x \in \mathbb{R}$,
\begin{equation}\label{Eq: W, Z limits}
    \lim_{y\to\infty} \frac{W^{(q)}(y + x)}{W^{(q)}(y)} = e^{\Phi(q)x}, \quad \lim_{y\to\infty} \frac{W^{(q)}(y)}{Z^{(q)}(y)} = \frac{\Phi(q)}{q}. 
\end{equation}
Using \eqref{Eq: W, Z limits} together with \eqref{Eq: W, Z scr}, and taking $c \uparrow \infty$, 
\begin{align*}
   \mathbb{E}_x \left[e^{-q\kappa_0^-} Z^{(q + \lambda)}(Y(\kappa_0^-) - a)\right] &= \lim_{c \to\infty} \mathbb{E}_x \left[e^{-q\kappa_0^-} Z^{(q + \lambda)}(Y(\kappa_0^-) - a); \kappa_0^- < \kappa_c^+\right]\\
   &= \mathscr{Z}^{(q,\lambda)}(x, a) - W^{(q)}(x) \left(\frac{q}{\Phi(q)}e^{-\Phi(q)a} + \lambda\int^0_a e^{-\Phi(q)z}Z^{(q + \lambda)}(z - a)\, {\rm d} z\right).
\end{align*}
Using integration by parts,
\begin{align*}
    e^{-\Phi(q)a} + \frac{\lambda}{q} \Phi(q)\int^0_a e^{-\Phi(q)z}Z^{(q + \lambda)}(z - a)\, {\rm d} z    = \frac{q + \lambda}{q} Z^{(q + \lambda)}(-a, \Phi(q)) - \frac{\lambda}{q} Z^{(q + \lambda)}(-a).
\end{align*}
Thus, 
\begin{align}\label{Eq: loffen Z}
\begin{split}
    B(x,a)&:=\frac {\mathbb{E}_x \left[e^{-q\kappa_0^-} Z^{(q + \lambda)}(Y(\kappa_0^-) - a)\right]} {Z^{(q + \lambda)}(-a)} = \frac {\mathscr{Z}^{(q,\lambda)}(x, a) } {Z^{(q + \lambda)}(-a)}- W^{(q)}(x)\left(\frac{q + \lambda}{\Phi(q)} \frac{Z^{(q + \lambda)}(-a, \Phi(q))} {Z^{(q + \lambda)}(-a)} - \frac{\lambda}{\Phi(q)}\right).
\end{split}
\end{align}
We also use the following to obtain a concise expression.
\begin{lem} \label{lemma_laplace} 
We have
\begin{equation*}
    \int^\infty_0 e^{-\Phi(q + \lambda)z} \mathscr{Z}^{(q,\lambda)}(z, a)\, {\rm d} z
    =\frac{q+\lambda}{\lambda \Phi(q+\lambda)}e^{-\Phi(q+\lambda)a}.
\end{equation*}
\end{lem}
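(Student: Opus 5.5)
The plan is to compute the Laplace transform $s\mapsto\int_0^\infty e^{-sz}\mathscr{Z}^{(q,\lambda)}(z,a)\,{\rm d} z$ for $s>\Phi(q+\lambda)$, where every term is absolutely integrable, and then let $s\downarrow\Phi(q+\lambda)$. We cannot simply plug in $s=\Phi(q+\lambda)$ in the defining representation of $\mathscr{Z}^{(q,\lambda)}$. There $Z^{(q+\lambda)}(z-a)$ grows like $e^{\Phi(q+\lambda)z}$, so the naive computation produces $\infty\cdot 0$: the Laplace transform of $W^{(q)}$ at $\Phi(q+\lambda)$ equals $1/(\psi(\Phi(q+\lambda))-q)=1/\lambda$, which exactly cancels the factor $\lambda$.

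\textbf{Step 1 (the left-hand side is finite and continuous at $s=\Phi(q+\lambda)$).} Use the alternative representation \eqref{Eq: W, Z scr}:
\[
\mathscr{Z}^{(q,\lambda)}(z,a)=Z^{(q)}(z-a)+\lambda\int_a^0 W^{(q)}(z-u)Z^{(q+\lambda)}(u-a)\,{\rm d} u .
\]
Both terms are nonnegative. The $u$-integral runs over the compact interval $[a,0]$, so both terms grow at most like $e^{\Phi(q)z}$, using the asymptotics \eqref{Eq: W, Z limits}. Since $\Phi(q)<\Phi(q+\lambda)$, the Laplace transform is finite at $s=\Phi(q+\lambda)$. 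Monotone convergence as $s\downarrow\Phi(q+\lambda)$ then shows that the transform at $\Phi(q+\lambda)$ is the limit of the transforms at $s>\Phi(q+\lambda)$.

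\textbf{Step 2 (computation for $s>\Phi(q+\lambda)$).} Now use the original definition $\mathscr{Z}^{(q,\lambda)}(z,a)=Z^{(q+\lambda)}(z-a)-\lambda\int_0^zW^{(q)}(z-y)Z^{(q+\lambda)}(y-a)\,{\rm d} y$. By the convolution theorem,
\[
\int_0^\infty e^{-sz}\mathscr{Z}^{(q,\lambda)}(z,a)\,{\rm d} z=\Bigl(1-\frac{\lambda}{\psi(s)-q}\Bigr)\int_0^\infty e^{-sz}Z^{(q+\lambda)}(z-a)\,{\rm d} z .
\]
Substituting $y=z-a$ and using the standard identity $\int_0^\infty e^{-sy}Z^{(p)}(y)\,{\rm d} y=\psi(s)/(s(\psi(s)-p))$, which follows from the defining Laplace transform of $W^{(p)}$, the last integral equals
\[
e^{-sa}\Bigl(\frac{\psi(s)}{s(\psi(s)-q-\lambda)}-\int_0^{-a}e^{-sy}Z^{(q+\lambda)}(y)\,{\rm d} y\Bigr).
\]
The prefactor simplifies to $(\psi(s)-q-\lambda)/(\psi(s)-q)$.

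\textbf{Step 3 (limit).} As $s\downarrow\Phi(q+\lambda)$, the prefactor $(\psi(s)-q-\lambda)/(\psi(s)-q)$ tends to $0$. The compact-range integral stays bounded, so its contribution vanishes. The singular part gives
\[
e^{-sa}\frac{\psi(s)}{s(\psi(s)-q)}\longrightarrow e^{-\Phi(q+\lambda)a}\frac{q+\lambda}{\lambda\,\Phi(q+\lambda)}.
\]
Combined with Step 1, this is the claim.

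The main obstacle is exactly this cancellation. One has to justify passing to the limit, and Step 1, via the representation \eqref{Eq: W, Z scr}, is what makes the limit legitimate.
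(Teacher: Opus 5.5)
Your argument is correct, but it takes a different route from the paper's.

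\textbf{The paper's route.} The paper works directly at the critical point $s=\Phi(q+\lambda)$, using the representation \eqref{Eq: W, Z scr}, where every integral is already finite. It splits the transform into two pieces: $D_1(a)$, the transform of $Z^{(q)}(\cdot-a)$, and $D_2(a)$, the transform of the $\int_a^0$ term. It evaluates each piece by integration by parts, in terms of $\overline{W}^{(q)}$ and the auxiliary functions $Z^{(q)}(-a,\Phi(q+\lambda))$ and $Z^{(q+\lambda)}(-a,\Phi(q+\lambda))$. It then applies the convolution identity (A.3) of \cite{PPY} to the remaining integrals against $W^{(q+\lambda)}(u-a)$, after which almost all terms cancel.

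\textbf{Your route.} You use \eqref{Eq: W, Z scr} only qualitatively, to obtain $\mathscr{Z}^{(q,\lambda)}(\cdot,a)\ge 0$ for $a\le 0$. The actual computation uses the original convolution definition on $s>\Phi(q+\lambda)$, where the convolution theorem applies to each term separately. Monotone convergence then carries the result to $s\downarrow\Phi(q+\lambda)$.

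\textbf{What each buys.} Your route is shorter and self-contained: it needs only the Laplace transforms of $W^{(p)}$ and $Z^{(p)}$. It also shows where the answer comes from. The answer is the value at $\Phi(q+\lambda)$ of $e^{-sa}\psi(s)/(s(\psi(s)-q))$, while the compact-range correction $\int_0^{-a}$ is killed by the vanishing factor $\psi(s)-q-\lambda$. By real-analyticity in $s$, the same closed form in fact gives the transform on all of $s>\Phi(q)$. The paper's route avoids any limiting argument and yields closed forms for $D_1$ and $D_2$ individually, at the cost of a long computation and an external identity.

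One small remark: the growth bound in your Step 1 is not needed. Monotone convergence already gives the equality in $[0,\infty]$, and Step 3 shows the limit is finite.
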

\begin{proof}
Substituting \eqref{Eq: W, Z scr}, we have $\int^\infty_0 e^{-\Phi(q + \lambda)z} \mathscr{Z}^{(q,\lambda)}(z, a)\, {\rm d} z= D_1(a) + D_2(a)$, where
\begin{align}
   D_1(a) &:= \int^\infty_0 e^{-\Phi(q + \lambda)z} Z^{(q)}(z - a)\, {\rm d} z, \notag\\
   D_2(a) &:= \int^\infty_0 e^{-\Phi(q + \lambda)z} \lambda  \int_{a}^{0} W^{(q)}(z - u)Z^{(q + \lambda)}(u - a)\, {\rm d} u\, {\rm d} z. \label{Eq: laplace transform Z scr}
\end{align}
By integration by parts,
\begin{equation}\label{Eq: integration by parts 0}
    D_1(a)
    = \frac{q}{\lambda\Phi(q + \lambda)} Z^{(q)}(-a, \Phi(q + \lambda)) + \frac{Z^{(q)}(-a)}{\Phi(q + \lambda)},
\end{equation}
where
\begin{align*}
    Z^{(q)}(x, \Phi(q + \lambda)) &\coloneqq e^{\Phi(q + \lambda)x} \left(1 - \lambda\int_0^{x} e^{-\Phi(q + \lambda)z} W^{(q)}(z)\, {\rm d} z \right) = \lambda\int_0^{\infty} e^{-\Phi(q + \lambda)z} W^{(q)}(z + x) \,{\rm d} z.
\end{align*}
Now, for $D_2$ as defined in \eqref{Eq: laplace transform Z scr}, by integration by parts,
\begin{align}\label{Eq: integration by parts 1}
\begin{split}
    \int_{a}^{0} W^{(q)}(z - u)Z^{(q + \lambda)}(u - a)\, {\rm d} u &=-\overline{W}^{(q)}(z)Z^{(q + \lambda)}(- a)+\overline{W}^{(q)}(z-a)\\
    &+(q+\lambda)\int_a^0\overline{W}^{(q)}(z-u)W^{(q + \lambda)}(u - a)\, {\rm d} u.
\end{split}
\end{align}	
Also note that
\begin{align}
\begin{split}
    \lambda \int^\infty_0e^{-\Phi(q+\lambda)z}\overline{W}^{(q)}(z){\rm d} z &= \frac{1}{\Phi(q+\lambda)},
\end{split}\label{Eq: integration by parts 2}\\
\begin{split}
    \lambda\int^\infty_0e^{-\Phi(q+\lambda)z}\overline{W}^{(q)}(z-a){\rm d} z&=\frac{\lambda}{\Phi(q+\lambda)}\overline{W}^{(q)}(-a)+\frac{Z^{(q)}(- a,\Phi(q+\lambda))}{\Phi(q+\lambda)}\\
    &=\frac{\lambda}{\Phi(q+\lambda)}\left(\frac{Z^{(q)}(-a)-1}{q}\right)+\frac{Z^{(q)}(- a,\Phi(q+\lambda))}{\Phi(q+\lambda)}.
    \end{split}\label{Eq: integration by parts 3}
\end{align}
By \eqref{Eq: integration by parts 3} with $a=u$, 
\begin{align}
\begin{split}\label{Eq: D2 misc 1}
    &\lambda(q + \lambda) \int^\infty_0e^{-\Phi(q+\lambda)z}\int_a^0\overline{W}^{(q)}(z-u)W^{(q + \lambda)}(u - a)\, {\rm d} u\, {\rm d} z\\
    &=(q+\lambda) \int_a^0W^{(q + \lambda)}(u - a) \left(\frac{\lambda}{\Phi(q+\lambda)}\left(\frac{Z^{(q)}(-u)-1}{q}\right)+\frac{Z^{(q)}(- u,\Phi(q+\lambda))}{\Phi(q+\lambda)}\right)\, {\rm d} u\\
    &=\frac{\lambda(q+\lambda)}{q\Phi(q+\lambda)} \left(\int_a^0W^{(q + \lambda)}(u - a)Z^{(q)}(-u){\rm d} u \right.\\
    &\left.-\overline{W}^{(q+\lambda)}(-a) + \frac{q}{\lambda} \int_a^0W^{(q + \lambda)}(u - a)Z^{(q)}(- u,\Phi(q+\lambda)){\rm d} u\right)\\
    &= \frac{1}{\Phi(q+\lambda)} \left(\frac{q+\lambda}{q}\left(Z^{(q+\lambda)}(-a)-Z^{(q)}(-a)\right) \right.\\
    &\left.-\frac{\lambda}{q}\left(Z^{(q+\lambda)}(-a)-1\right)\right) +\frac{q+\lambda}{\lambda\Phi(q+\lambda)} \left(Z^{(q+\lambda)}(- a,\Phi(q+\lambda))-Z^{(q)}(- a,\Phi(q+\lambda))\right),
\end{split}
\end{align}
where in the last equality we used identity (A.3) in \cite{PPY}. Substituting \eqref{Eq: integration by parts 1} into \eqref{Eq: laplace transform Z scr}, then substituting \eqref{Eq: integration by parts 2}, \eqref{Eq: integration by parts 3}, and \eqref{Eq: D2 misc 1}, we obtain 
\begin{align*}
  D_2(a) = - \frac{1}{\Phi(q+\lambda)} \left(Z^{(q)}(-a) -  \frac{q+\lambda}{\lambda}Z^{(q+\lambda)}(- a,\Phi(q+\lambda)) + \frac{q}{\lambda} Z^{(q)}(- a,\Phi(q+\lambda))\right).
\end{align*}
The above identity, together with \eqref{Eq: integration by parts 0}, yields the desired expression.
\end{proof}
Now for the proof of Proposition \ref{thm_gittins_reflected},
we want to compute, for $x \in [a, \infty)$, $y > 0$,
\begin{align}
    u^{(q)}(x, {\rm d} y; a) \coloneqq \int^\infty_0 e^{-qt} \mathbb{P}_x(Y^{(a)}(t) \in {\rm d} y, t < T^-_0) \, {\rm d} t
    = q^{-1} \mathbb{P}_x(Y^{(a)}(\mathbf{e}_{q}) \in {\rm d} y, \mathbf{e}_{q} < T^-_0),
\end{align}
where $\mathbf{e}_{q}$ is an independent exponential random variable with parameter $q$.

(i) For $y > 0$: as $a < 0$, we have $Y^{(a)}(t) = Y(t)$ on $[0, \kappa_0^-)$. Hence, $\inf\{t \geq 0: Y^{(a)}(t) < 0\} = \inf\{t \geq 0: Y(t) < 0\} =: \kappa_0^-$;  see \eqref{first_passage_time}. Let
\begin{align} \label{mu_def}
\begin{split}
    \mu_x^q ({\rm d} z) &:= \mathbb{P}_x \left(Y^{(a)}(\kappa_0^-)\in {\rm d} z, \kappa_0^- < \mathbf{e}_{q}\right) = \begin{cases}
        \mathbb{P}_x \left(Y(\kappa_0^-)\in {\rm d} z, \kappa_0^- < \mathbf{e}_{q}\right), & z > a, \\
        \mathbb{P}_x \left(Y(\kappa_0^-)\in (-\infty, a], \kappa_0^- < \mathbf{e}_{q}\right), & z = a.
    \end{cases}
\end{split}
\end{align}
Let $\mathbf{e}_{\lambda} = T_1 \sim \exp(\lambda)$, which is independent of $Y$ and $\mathbf{e}_{q}$. By conditioning on whether $\kappa_0^-$ occurs before $\mathbf{e}_{q}$, 
\begin{align*}
    u^{(q)}(x, {\rm d} y; a) = \frac{1}{q} \mathbb{P}_x(Y^{(a)}(\mathbf{e}_{q}) \in {\rm d} y; \mathbf{e}_{q} < \kappa_0^-)
    + \frac{1}{q}\int_{[a, 0]} 
    \mu_x^q ({\rm d} z)
    \mathbb{P}_z(Y^{(a)}(\mathbf{e}_{q}) \in {\rm d} y, \mathbf{e}_{q} < T_0^-).
\end{align*}
By Theorem 2.8(i), (iii) in \cite{KKR}, for $z \in [a, 0]$, 
\begin{align}\label{eqn_tau_0_e_q_lambda}
    \mathbb{P}_z(\kappa_0^+ < \mathbf{e}_{q} \wedge \mathbf{e}_{\lambda}) = \mathbb{E}_z\left[e^{-(q + \lambda) \kappa_0^+}; \kappa_0^+ < \infty\right]
    = \frac{Z^{(q + \lambda)}(z - a)}{Z^{(q + \lambda)}(-a)}, 
\end{align}
and
\begin{align}
    \begin{split}
        \frac{1}{q}\mathbb{P}_z(Y^{(a)}(\mathbf{e}_{q}) \in {\rm d} y, \mathbf{e}_{q} < \kappa_0^+ \wedge \mathbf{e}_{\lambda})
        &= \mathbb{E}_x \left[ \int^\infty_0 e^{-qt} 1_{\{Y^{(a)}(t) \in {\rm d} y, t < \kappa^+_0\}} 1_{\{t < \mathbf{e}_{\lambda}\}}\, {\rm d} t \right] \\ 
        &= \mathbb{E}_x \left[ \int^\infty_0 e^{-(q+\lambda)t} 1_{\{Y^{(a)}(t) \in {\rm d} y, t < \kappa^+_0\}}  {\rm d} t \right] \\
        &= \left(\frac{Z^{(q + \lambda)}(z - a)}{Z^{(q + \lambda)}(- a)} W^{(q + \lambda)}(-y) - W^{(q + \lambda)}(z - y)\right)\, {\rm d} y.
        \end{split} \label{eqn_tau_0_e_q_lambda_P} 
\end{align}
Using \eqref{eqn_tau_0_e_q_lambda},
\begin{align*}
   \frac{1}{q}\mathbb{P}_z(Y^{(a)}(\mathbf{e}_{q}) \in {\rm d} y, \mathbf{e}_{q} < T_0^-)
    &= \mathbb{P}_z(\kappa_0^+ < \mathbf{e}_{q} \wedge \mathbf{e}_{\lambda}) u^{(q)}(0, {\rm d} y; a) = \frac{Z^{(q + \lambda)}(z - a)}{Z^{(q + \lambda)}(-a)}u^{(q)}(0, {\rm d} y; a),
\end{align*}
and, by Theorem 2.7(iii) in \cite{KKR},
\begin{align*}
    \frac{1}{q} \mathbb{P}_x(Y^{(a)}(\mathbf{e}_{q}) \in {\rm d} y; \mathbf{e}_{q} < \kappa_0^-)
    &= \frac{1}{q} \mathbb{P}_x(Y(\mathbf{e}_{q}) \in {\rm d} y; \mathbf{e}_{q} < \kappa_0^-) = \left(e^{-\Phi(q)y} W^{(q)}(x) - W^{(q)}(x - y)\right) {\rm d} y.
\end{align*}
Hence, using \eqref{mu_def},
\begin{align}
\begin{split}
    u^{(q)}(x, {\rm d} y; a)
    &= \int_{[a, 0]} 
    \mu_x^q ({\rm d} z)
    \frac{Z^{(q + \lambda)}(z - a)}{Z^{(q + \lambda)}(-a)}u^{(q)}(0, {\rm d} y; a) + \left(e^{-\Phi(q)y} W^{(q)}(x) - W^{(q)}(x - y)\right) {\rm d} y\\
    &= B(x,a) u^{(q)}(0, {\rm d} y; a)+ \left(e^{-\Phi(q)y} W^{(q)}(x) - W^{(q)}(x - y)\right) {\rm d} y. 
    \end{split}\label{Eq: u(x, dy), y > 0}
\end{align}
(ii) Now, we compute $u^{(q)}(0, {\rm d} y; a)$ for $y > 0$. By conditioning on whether $\mathbf{e}_{q}$ is smaller than $\mathbf{e}_{\lambda}$, 
\begin{align*}
    u^{(q)}(0, {\rm d} y; a) = \frac{1}{q} \mathbb{P}(Y^{(a)}(\mathbf{e}_{q}) \in {\rm d} y, \mathbf{e}_{q} < \mathbf{e}_{\lambda})
    + \int^\infty_0 \mathbb{P}(Y^{(a)}(\mathbf{e}_{\lambda}) \in {\rm d} z, \mathbf{e}_{\lambda} < \mathbf{e}_{q}) u^{(q)}(z, {\rm d} y; a).
\end{align*}
Similar to \eqref{eqn_tau_0_e_q_lambda_P},
\begin{align*}
    \frac{1}{q} \mathbb{P}_z(Y^{(a)}(\mathbf{e}_{q}) \in {\rm d} y, \mathbf{e}_{q} < \kappa_c^+ \wedge \mathbf{e}_{\lambda})
    = \left(\frac{Z^{(q + \lambda)}(z - a)}{Z^{(q + \lambda)}(c - a)} W^{(q + \lambda)}(c -y) - W^{(q + \lambda)}(z - y)\right)\, {\rm d} y.
\end{align*}
Taking $c \uparrow \infty$, we have, by \eqref{Eq: W, Z limits},
\begin{align*}
    \frac{1}{q}\mathbb{P}_z(Y^{(a)}(\mathbf{e}_{q}) \in {\rm d} y, \mathbf{e}_{q} < \mathbf{e}_{\lambda}) 
    &= \lim_{c \to \infty} \frac{1}{q} \mathbb{P}_z(Y^{(a)}(\mathbf{e}_{q}) \in {\rm d} y, \mathbf{e}_{q} < \kappa_c^+ \wedge \mathbf{e}_{\lambda})\\
    &= \left(\frac{\Phi(q + \lambda)}{q + \lambda}e^{-\Phi(q + \lambda)(y - a)} Z^{(q + \lambda)}(z - a)  - W^{(q + \lambda)}(z - y)\right)\, {\rm d} y.
\end{align*}
In addition, $\mathbb{P}(Y^{(a)}(\mathbf{e}_{\lambda}) \in {\rm d} z, \mathbf{e}_{\lambda} < \mathbf{e}_{q})
= \frac{\lambda}{q}\mathbb{P}(Y^{(a)}(\mathbf{e}_{q}) \in {\rm d} y, \mathbf{e}_{q} < \mathbf{e}_{\lambda})
$ by the same argument as \eqref{eqn_tau_0_e_q_lambda_P}. Hence, for $y > 0$,
\begin{align*}
    u^{(q)}(0, {\rm d} y; a) &= \frac{\Phi(q + \lambda)}{q + \lambda}e^{-\Phi(q + \lambda)(y - a)} Z^{(q + \lambda)}(- a) \, {\rm d} y\\
    &+ \lambda\int^\infty_0 \frac{\Phi(q + \lambda)}{q + \lambda}e^{-\Phi(q + \lambda)(z - a)}  Z^{(q + \lambda)}(- a) u^{(q)}(z, {\rm d} y; a) \, {\rm d} z.
\end{align*}
Substituting in \eqref{Eq: u(x, dy), y > 0} with $x = z$ and 
\begin{align*}
    \int^\infty_0  e^{-\Phi(q + \lambda)z} \left(e^{-\Phi(q)y} W^{(q)}(z) - W^{(q)}(z - y)\right) \, {\rm d} z   = \lambda^{-1} \left( e^{-\Phi(q)y} - e^{-\Phi(q+\lambda )y} \right),
\end{align*}
 we obtain
\begin{align*}
    A u^{(q)}(0, {\rm d} y; a) &= \frac{\Phi(q + \lambda)}{q + \lambda}e^{-\Phi(q + \lambda)(y - a)} Z^{(q + \lambda)}(- a) \, {\rm d} y\\
    &+ \lambda \int^\infty_0  \left(\frac{\Phi(q + \lambda)}{q + \lambda}e^{-\Phi(q + \lambda)(z - a)} Z^{(q + \lambda)}(- a)\right)  \left(e^{-\Phi(q)y} W^{(q)}(z) - W^{(q)}(z - y)\right) \, {\rm d} z\, {\rm d} y \\
    &=  \frac{\Phi(q + \lambda)}{q + \lambda}e^{\Phi(q + \lambda)a - \Phi(q)y} Z^{(q + \lambda)}(- a) \, {\rm d} y,
\end{align*}
where we recall \eqref{Eq: loffen Z} for the expressions of $B(z,a)$ and 
\begin{align*}
    A\coloneqq 1 - \lambda \int^\infty_0
    \left(\frac{\Phi(q + \lambda)}{q + \lambda}e^{-\Phi(q + \lambda)(z - a)} Z^{(q + \lambda)}(- a)\right) B(z,a) \, {\rm d} z.
\end{align*}
Substituting \eqref{Eq: loffen Z}, the Laplace transform of $W^{(q)}$ and Lemma \ref{lemma_laplace}, it simplifies to
\begin{align*}
    A = \frac{\Phi(q + \lambda)}{q + \lambda} e^{\Phi(q+\lambda)a} Z^{(q + \lambda)}(-a) \left(\frac{q + \lambda}{\Phi(q)} \frac{Z^{(q + \lambda)}(-a, \Phi(q))}{Z^{(q + \lambda)}(-a)} - \frac{\lambda}{\Phi(q)}\right). 
\end{align*}
The proof is complete after substituting this expression for $A$ to solve for $u^{(q)}(0, {\rm d} y; a)$.

\subsection{Proof of Proposition \ref{Prop: reflected convergence}} \label{Appx: reflected convergence}
Consider, for $\theta > 0$,
\begin{align}\label{Eq: laplace transform reflected}
    \int_{[0,\infty)} e^{-\theta y}\mu^{\lambda}_{b}({\rm d} y) = 1   - \frac{\theta \lambda Z^{(q + \lambda)}(-b)}  {(\Phi(q) + \theta)(q + \lambda)Z^{(q + \lambda)}(-b, \Phi(q))}.
\end{align}
It can be seen that $\lim_{\lambda \to \infty} Z^{(q + \lambda)}(x, \Phi(q))/Z^{(q + \lambda)}(x) = 1$ for $x > 0$. Indeed, for $x > 0$, 
\begin{align*}
    \frac{Z^{(q + \lambda)}(x, \Phi(q))}{Z^{(q + \lambda)}(x)}
    &= \frac{e^{\Phi(q)x} + \lambda \int^x_0 e^{\Phi(q)(x - y)} W^{(q + \lambda)}(y)\, {\rm d} y}{1 + (q + \lambda) \int^x_0 W^{(q + \lambda)}(y)\, {\rm d} y}\\
    &= \frac{e^{\Phi(q)x} + \frac{\lambda}{\psi'(\Phi(q + \lambda))} \frac{e^{\Phi(q + \lambda) x} - e^{\Phi(q) x}}{\Phi(q + \lambda) - \Phi(q)}}{1 + \frac{q + \lambda}{\psi'(\Phi(q + \lambda))} \frac{e^{\Phi(q + \lambda) x} - 1}{\Phi(q + \lambda)} - (q + \lambda) \int^x_0 \hat{u}^{(q + \lambda)}(y)\, {\rm d} y}\\
    &- \frac{\lambda\int^x_0 e^{\Phi(q)(x - y)} \hat{u}^{(q + \lambda)}(y)\, {\rm d} y}{1 + \frac{q + \lambda}{\psi'(\Phi(q + \lambda))} \frac{e^{\Phi(q + \lambda) x} - 1}{\Phi(q + \lambda)} - (q + \lambda) \int^x_0 \hat{u}^{(q + \lambda)}(y)\, {\rm d} y},
\end{align*}
where the second equality follows from $W^{(q + \lambda)}(y) = e^{\Phi(q + \lambda)y}/\psi'(\Phi(q + \lambda)) - \hat{u}^{(q + \lambda)}(y)$, and $\hat{u}^{(q + \lambda)}(y)\, {\rm d} y \coloneqq \int^\infty_0 e^{-(q + \lambda)t} \mathbb{P}(Y(t) \in -{\rm d} y)\, {\rm d} t$ is bounded in $\lambda$ and $y$ (see Theorem 2.7 (iv) of \cite{KKR}). The limit follows immediately by taking $\lambda\uparrow \infty$. 

Applying this limit to \eqref{Eq: laplace transform reflected} yields 
\begin{equation*}
    \int_{[0,\infty)} e^{-\theta y}\mu^{\lambda}_{b}({\rm d} y) = \frac{\Phi(q)}{\Phi(q) + \theta} = \int_{[0,\infty)} e^{-\theta y}\mu^\infty({\rm d} y).
\end{equation*}
Furthermore, applying this limit to \eqref{Eq: gittins reflected} yields the convergence of the Gittins index.

\subsection{Proof of Theorem \ref{thm_gittins_diffusion}.}\label{Appx: Gittins diffusion} 
Let $\kappa_x :=\inf\{t\geq0: Y(t)=x\}$. As in page 18 of \cite{Borodin}, we have
\begin{equation}\label{exit_diff}
    \mathbb{E}_x \left[e^{-\alpha\kappa_y} \right] =  \frac{\varphi_\alpha(x)}{\varphi_\alpha(y)}, \quad x \geq y.
\end{equation}
The potential measure of the process $Y$ is given in terms of the Green function. For $\alpha \geq 0$, $x\in I$, as in page 19 of \cite{Borodin},
\begin{align}\label{res_diff_1}
    \mathbb{E}_x\left[\int_0^{\infty} e^{-\alpha t}1_{\{Y(t)\in {\rm d} z\}} \, {\rm d} t\right]=G_{\alpha}(x,z) \, m({\rm d} z).
\end{align}
By Lemma 1 in \cite{CLZ}, the potential measure of the process $Y$ killed at the first hitting time of $y$ is given by, for $x \geq y$,
\begin{align}\label{res_diff_2}
    \mathbb{E}_x\left[\int_0^{\kappa_y} e^{-\alpha t}1_{\{Y(t)\in {\rm d} z\}}\, {\rm d} t\right]
    = \left(G_{\alpha}(x, z)-\frac{\varphi_{\alpha}(x)}{\varphi_{\alpha}(y)}G_{\alpha}(y,z)\right) \, m({\rm d} z).
\end{align}
Theorem \ref{thm_gittins_diffusion} is a direct consequence of the following and Theorem \ref{thm_Gittins_exponential}.

\begin{proposition}\label{res_per_diff}
For $y\in\mathbb{R}$,
\begin{align*}
    &\mathbb{E}_y\left[\int_0^{T_y^-}e^{-qs}h(Y(s))\, {\rm d} s\right] \\
    &= \Theta(y)^{-1}\int_{I}G_{q+\lambda}(y,z)h(z)\\
    &+ \Theta(y)^{-1} \int_{I \cap (y, \infty)}G_{q+\lambda}(y,z) \Bigg(\lambda\int_{I \cap (y, \infty)}h(u) \left(G_{q}(z,u)-\frac{\varphi_q(z)}{\varphi_q(y)}G_q(y,u)\right) m({\rm d} u)\Bigg)m({\rm d} z),
\end{align*}
where $\Theta(y) := 1-\lambda \int_{I \cap (y, \infty)}\frac{\varphi_{q}(z)}{\varphi_q(y)}G_{q+\lambda}(y,z) \,m({\rm d} z)$.
\end{proposition}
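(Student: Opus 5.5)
Let $F(x) := \mathbb{E}_x\big[\int_0^{T_y^-} e^{-qs}h(Y(s))\,{\rm d} s\big]$ for $x \geq y$ (with $h$ supported on $(y,\infty)$ as needed, matching the indicator in \eqref{gittins}). The idea is to derive a renewal equation for $F$ at the starting point $x=y$ and then close it.

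\textbf{Decomposition at $T_1$.} Starting from $y$, condition on the first observation time $T_1=\mathbf{e}_\lambda$, which is independent of $Y$. The integral up to $T_1$ contributes
\[
\mathbb{E}_y\Big[\int_0^\infty e^{-(q+\lambda)s}h(Y(s))\,{\rm d} s\Big] = \int_I G_{q+\lambda}(y,z)h(z)\,m({\rm d} z),
\]
by \eqref{res_diff_1}. At time $T_1$, the strong Markov property applies.
\begin{itemize}
\item If $Y(T_1)\le y$, we stop.
\item If $Y(T_1)=z>y$, we continue from $z$ with value $F(z)$.
\end{itemize}
The discounted law of $Y(T_1)$ is $\lambda G_{q+\lambda}(y,z)\,m({\rm d} z)$. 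This gives
\[
F(y)=\int_I G_{q+\lambda}(y,z)h(z)\,m({\rm d} z)+\lambda\int_{I\cap(y,\infty)}G_{q+\lambda}(y,z)F(z)\,m({\rm d} z).
\]

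\textbf{Expressing $F(z)$ for $z>y$.} Since the paths are continuous, $Y$ started at $z>y$ must hit $y$ at $\kappa_y$ before any observation can fall at or below $y$. Before $\kappa_y$, every observation sees $Y>y$, so $T_y^->\kappa_y$. Hence
\[
F(z)=\mathbb{E}_z\Big[\int_0^{\kappa_y}e^{-qs}h(Y(s))\,{\rm d} s\Big]+\mathbb{E}_z\big[e^{-q\kappa_y}\big]\,\tilde F(y),
\]
where $\tilde F(y)$ is the value when starting at $y$ at a non-observation time.

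By memorylessness of the Poisson clock, $\tilde F(y)$ equals $F(y)$ in law. The only caveat is the initial term $R(x)$ at $s=0$, which is not part of $F$ and so causes no issue; $T^-$ is defined with $T>0$, so it is the same quantity. By \eqref{res_diff_2} and \eqref{exit_diff},
\[
F(z)=\int_I h(u)\Big(G_q(z,u)-\tfrac{\varphi_q(z)}{\varphi_q(y)}G_q(y,u)\Big)m({\rm d} u)+\tfrac{\varphi_q(z)}{\varphi_q(y)}F(y).
\]

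\textbf{Solving the linear equation.} Substituting into the equation for $F(y)$ and moving the $F(y)$ term to the left produces the coefficient $\Theta(y)$. Dividing by $\Theta(y)$ gives the stated formula. The restriction of the $u$-integral to $(y,\infty)$ follows because the killed Green kernel vanishes for $u<y$ when $z>y$. Indeed, the process killed at $y$ cannot visit below $y$, so for $u\le y\le z$ we have $G_q(z,u)=\psi_q(u)\varphi_q(z)/w_q=\tfrac{\varphi_q(z)}{\varphi_q(y)}G_q(y,u)$.

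\textbf{Main obstacle.} The main step is rigorously justifying the restart identity $\tilde F(y)=F(y)$, which combines the strong Markov property at $\kappa_y$ with the memorylessness of the independent Poisson observation times. Two further points must be checked:
\begin{itemize}
\item $\Theta(y)>0$, which follows since $\lambda\int G_{q+\lambda}\varphi_q/\varphi_q(y)\,{\rm d} m=\mathbb{E}_y[e^{-qT_1}e^{-q\kappa_y\circ\theta};\,Y(T_1)>y]<1$;
\item $F(y)$ is finite under the integrability assumption, so that the equation can be solved uniquely.
\end{itemize}
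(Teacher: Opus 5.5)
Your proposal is correct and follows essentially the same route as the paper. The paper also uses a first-step decomposition at the first observation time $\mathbf{e}_\lambda$, expressing $g(y,y)$ through $g(z,y)$ for $z>y$, and applies the strong Markov property at $\kappa_y$ via path continuity, \eqref{exit_diff} and \eqref{res_diff_2} to write $g(z,y)$ affinely in $g(y,y)$. It then solves the resulting linear equation, which produces $\Theta(y)$. Your extra checks are details the paper leaves implicit: that $\Theta(y)>0$, and that the killed kernel vanishes for $u\le y$, so no support restriction on $h$ is actually needed.
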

\begin{proof}
Let $g(x,y) \coloneqq \mathbb{E}_x [\int_0^{T_y^-} e^{-qs}h(Y(s)) \, {\rm d} s ]$ for $x,y\in\mathbb{R}$. For $x > y$, by the strong Markov property, \eqref{exit_diff} and \eqref{res_diff_2},
\begin{align}\label{res_diff_x>y}
\begin{split}
    g(x,y)
    &=\mathbb{E}_x\left[\int_0^{\kappa_y}e^{-qs}h(Y(s)){\rm d} s\right]+ g(y,y)\mathbb{E}_x\left[e^{-q\kappa_y};\kappa_y<\infty\right]\\
    &=\int_{I\cap (y, \infty)}h(z)\left(G_{q}(x,z)-\frac{\varphi_{q}(x)}{\varphi_{q}(y)}G_{q}(y,z)\right)m({\rm d} z)+g(y,y)\frac{\varphi_q(x)}{\varphi_q(y)}.
\end{split}
\end{align}
We now compute $g(y,y)$. To this end, note that the strong Markov property at the time $\mathbf{e}_{\lambda}$, together with \eqref{res_diff_1}, \eqref{res_diff_2}, and \eqref{exit_diff} give
\begin{align}\label{res_diff_3}
\begin{split}
  g(y,y)
    &=\mathbb{E}_{y}\left[\int_0^{\mathbf{e}_{\lambda}}e^{-qs}h(Y(s)){\rm d} s\right]+\mathbb{E}_{y}\left[e^{-q\mathbf{e}_{\lambda}}g(Y(\mathbf{e}_{\lambda}),y)1_{\{Y(\mathbf{e}_{\lambda})>y\}}\right]\\
    &=\mathbb{E}_y\left[\int_0^{\infty}e^{-(q+\lambda)s}h(Y(s))\, {\rm d} s\right]+\lambda\mathbb{E}_y\left[\int_0^{\infty}e^{-(q+\lambda)s}g(Y(s),y)1_{\{Y(s)>y\}} \, {\rm d} s\right]\\
    &=\int_{I}h(z)G_{q+\lambda}(y,z)\, m({\rm d} z)+\lambda\int_{I}g(z,y)G_{q+\lambda}(y,z)1_{\{z>y\}}\, m({\rm d} z).
\end{split}
\end{align}
Applying \eqref{res_diff_x>y} in \eqref{res_diff_3} gives
\begin{align*}
    g(y,y)
    &=\int_{I}h(z)G_{q+\lambda}(y,z) \, m({\rm d} z) + \lambda g(y,y)\int_{I}\frac{\varphi_{q}(z)}{\varphi_q(y)}G_{q+\lambda}(y,z) 1_{\{z>y\}} \, m({\rm d} z)\notag\\
    &+\lambda\int_{I} G_{q+\lambda}(y,z) 1_{\{z>y\}}\int_{I \cap (y, \infty)} 
    h(u)\left(G_{q}(z,u) - \frac{\varphi_q(z)}{\varphi_q(y)}G_q(y,u)\right) \, m({\rm d} u) \, m({\rm d} z).
\end{align*}
Solving for $g(y,y)$, we obtain the desired expression.  
\end{proof}

\section{Additional details for numerical experiments}
\subsection{Gittins index.}\label{Appx: Gittins explicit}
We provide the analytical expression for the Gittins index of the five models presented above. For diffusion based models, the Gittins index is specified by the speed measure $m({\rm d} y)$, the Green function $G_\alpha(x, y)$, and $\varphi_{q}(x)/\varphi_q(y)$ for $x \geq y$. The following expressions can be found in Appendix 1 of \cite{Borodin}:

$\text{BM}(\sigma)$:
    \begin{align*}
        m({\rm d} y) = \frac{2}{\sigma^2} \,{\rm d} y, \quad \frac{\varphi_{q}(x)}{\varphi_q(y)} = e^{-\frac{\sqrt{2q}}{\sigma}(x - y)}, \quad
        G_\alpha(x, y) = \frac{\sigma}{2\sqrt{2\alpha}} e^{-\frac{\sqrt{2\alpha}}{\sigma}(x - y)}. 
    \end{align*}
$\text{RBM}(\alpha, \sigma)$:
    \begin{align*}
        m({\rm d} y) = \frac{2}{\sigma^2} \,{\rm d} y, \quad \frac{\varphi_{q}(x)}{\varphi_q(y)} = e^{-\frac{\sqrt{2q}}{\sigma}(x - y)}, \quad
        G_\alpha(x, y) = \frac{\sigma}{2\sqrt{2\alpha}} \left(e^{-\frac{\sqrt{2\alpha}}{\sigma}(x - y)} + e^{\frac{\sqrt{2\alpha}}{\sigma} (-(x + y) + 2a)}\right). 
    \end{align*}
$\text{OU}(\gamma)$:
    \begin{align*}
        &m({\rm d} y) = 2e^{-\gamma y^2}\,{\rm d} y, \quad \frac{\varphi_{q}(x)}{\varphi_q(y)} = e^{\frac{\gamma}{2}(x^2 - y^2)} \frac{D_{-\alpha/\gamma}(x\sqrt{2\gamma})}{D_{-\alpha/\gamma}(y\sqrt{2\gamma})},\\
        &G_\alpha(x, y) = \frac{\Gamma(\alpha/\gamma)}{2\sqrt{\gamma \pi}} e^{\frac{\gamma}{2}(x^2 + y^2)} D_{-\alpha/\gamma}(x\sqrt{2\gamma})  D_{-\alpha/\gamma}(-y\sqrt{2\gamma}),
    \end{align*}
    where $D_{-\nu}$ is the parabolic cylinder function defined by
    \begin{equation*}
        D_{-\nu}(x) \coloneqq \frac{1}{\Gamma(\nu)} e^{-\frac{x^2}{4}} \int^\infty_0 t^{\nu - 1} e^{-xt - \frac{t^2}{2}}\, {\rm d} t, \quad x\in \mathbb{R}. 
    \end{equation*}

For L\'evy-based models, it is enough to specify the scale functions $W^{(q)}$ for $q \geq 0$. From Example 3.1 of \cite{Egami}, we have 
\begin{equation}\label{Eq: W numerical}
    W^{(q)}(x) = 1_{\{x \geq 0\}} \left(\frac{e^{\Phi(q) x}}{\psi'(\Phi(q))} - \sum^{2}_{i = 1} B_{i, q} e^{-\xi_{i, q} x} \right), 
\end{equation}
where $B_{i, q} = -1/\psi'(-\xi_{i, q})$ and $\xi_{i, q}$ satisfies $\psi(-\xi_{i, q}) = q$ with $-\xi_{i, q} < 0$ for $i = 1, 2$.

\subsection{Algorithm for pathwise evaluation.}\label{Appx: algo}
Algorithm \ref{alg:gittins-eval} is used to perform the experiments in Section \ref{Sect: experiment}.
\begin{algorithm}[h]
\caption{Path-wise evaluation for Gittins index strategy}
\label{alg:gittins-eval}
\begin{flushleft}
\textbf{for} $j \in \mathcal{J} \coloneqq \{1,2,3\}$ \textbf{do} \\
Sample $\mathcal{T}^j \coloneqq \{T^j(k): T^j(k) = \sum^k_{m = 1} \mathrm{e}^j_m,\, k \geq 0, \, T^j(k) < T\}$, where $(\mathrm{e}^j_m)_{m \in \mathbb{N}} \sim \text{Exp}(\lambda^j)$; \\
Sample $Y^j(T^j(k))$ and compute the Gittins index $\Gamma^j(k) \coloneqq \Gamma^j(Y^j(T^j(k)))$ for $T^j(k) \in \mathcal{T}^j$; \\
\textbf{end for} \\
Initialise the cumulative rewards $\mathcal{R} = 0$ and the cumulative time $S = 0$;\\
Initialise counters $I^j = 0$ for $j \in\mathcal{J}$;\\
\textbf{while} $S<T$ \textbf{do} \\
Compute $j^* \coloneqq \operatorname{arg~max}_{j \in \mathcal{J}} \, \Gamma^j(I^j)$; \\
\textbf{if} $T^{j^*}(I^{j^*} + 1) \in \mathcal{T}^{j^*}$ \textbf{and} $S + T^{j^*}(I^{j^*} + 1) - T^{j^*}(I^{j^*}) \leq T$ \textbf{then} \\
Compute $\Delta T \coloneqq T^{j^*}(I^{j^*} + 1) - T^{j^*}(I^{j^*})$; \\
Update $\mathcal{R} = \mathcal{R} + e^{-qS} \frac{1 - e^{-q \Delta T}}{q} R^{j^*}(Y^{j^*}(T^{j^*}(I^{j^*})))$; \\
Update $S = S + \Delta T$; \\
\textbf{else} \\
Update $\mathcal{R} = \mathcal{R} + e^{-qS} \frac{1 - e^{-q (T - S)}}{q} R^{j^*}(Y^{j^*}(T^{j^*}(I^{j^*})))$; \\
\textbf{break}; \\
\textbf{end if} \\
Update $I^{j^*} = I^{j^*} + 1$; \\
\textbf{end while}
\end{flushleft}
\end{algorithm}
\end{document}